\theoremstyle{plain}
\newtheorem{theorem}{Theorem}[section] 
\newtheorem{proposition}[theorem]{Proposition}
\newtheorem{lemma}[theorem]{Lemma}
\newtheorem{corollary}[theorem]{Corollary}
\newtheorem{definition}[theorem]{Definition}
\newtheorem{remark}[theorem]{Remark}
\newtheorem{hypothesis}[theorem]{Hypothesis}
\newcommand{\bs}{\symbol{92}}
\def\ogg~{{\rm \og}}   
\def\N{{\mathbb N}}    
\def\Z{{\mathbb Z}}     
\def\R{{\mathbb R}}    
\def\Q{{\mathbb Q}}    
\def\C{{\mathbb C}}    
  \def\cG{{\mathcal G}}  \def\cS{{\mathcal S}}     \def\cT{{\mathcal T}}          \def\cV{{\mathcal V}}    \def\cQ{{\mathcal Q}} \def\cW{{\mathcal W}} \def\cF{{\mathcal F}}  \def\cL{{\mathcal L}}
\newcommand{\erf}{\operatorname{erf}}
\newcommand{\GL}{\operatorname{GL}}             
\DeclareMathOperator{\idty}{Id}
\DeclareMathOperator{\v1}{V1}
\newcommand\Res{\operatorname{Res}}
\begin{document}
	\title[A mathematical model of the visual MacKay effect]{A mathematical model of the visual MacKay effect}
	\thanks{This work was supported by ANR-20-CE48-0003 and ANR-11-IDEX-0003, Objet interdisciplinaire H-Code. The work of the first author was supported by a grant from the bourse de th\`eses ``Jean-Pierre Aguilar''. 
	}
	
	\author{Cyprien Tamekue}
	\address{Université Paris-Saclay, CNRS, CentraleSupélec, Laboratoire des signaux et systèmes, 91190, Gif-sur-Yvette, France}
	\email{cyprien.tamekue@centralesupelec.fr}
	
	\author{Dario Prandi}
	\address{Université Paris-Saclay, CNRS, CentraleSupélec, Laboratoire des signaux et systèmes, 91190, Gif-sur-Yvette, France}
	\email{dario.prandi@centralesupelec.fr}
	
	\author{Yacine Chitour}
	\address{Université Paris-Saclay, CNRS, CentraleSupélec, Laboratoire des signaux et systèmes, 91190, Gif-sur-Yvette, France}
	\email{yacine.chitour@centralesupelec.fr}
	
	\begin{abstract}
This paper investigates the intricate connection between visual perception and the mathematical modeling of neural activity in the primary visual cortex (V1). The focus is on modeling the visual MacKay effect  [D. M. MacKay, Nature, 180 (1957), pp. 849–850]. While bifurcation theory has been a prominent mathematical approach for addressing issues in neuroscience, especially in describing spontaneous pattern formations in V1 due to parameter changes, it faces challenges in scenarios with localized sensory inputs. This is evident, for instance, in Mackay's psychophysical experiments, where the redundancy of visual stimuli information results in irregular shapes, making bifurcation theory and multi-scale analysis less effective. To address this, we follow a mathematical viewpoint based on the input-output controllability of an Amari-type neural fields model. In this framework, we consider sensory input as a control function, a cortical representation via the retino-cortical map of the visual stimulus that captures its distinct features. This includes highly localized information in the center of MacKay's funnel pattern ``MacKay rays''. From a control theory point of view, the Amari-type equation's exact controllability property is discussed for linear and nonlinear response functions. For the visual MacKay effect modeling, we adjust the parameter representing intra-neuron connectivity to ensure that cortical activity exponentially stabilizes to the stationary state in the absence of sensory input. Then, we perform quantitative and qualitative studies to demonstrate that they capture all the essential features of the induced after-image reported by MacKay.\\
		\textbf{Keywords.}  	Control in neuroscience, Exact controllability, Neural field model, Amari-type equation, Visual illusions and perception, MacKay effect, Spatially forced pattern forming system.\\
			\textbf{MSCcodes.}  93C20, 92C20, 35B36, 45A05, 45K05.
	\end{abstract}

%
	
	\maketitle
	
	\tableofcontents

\section{Introduction}\label{s::intro}

A simple yet profound question that can arise in humans daily is how we can control the complexities around us, whether they are mathematical equations or even our perceptions of reality. With its intricate network of neurons, the brain is a prime example of a complex system that can be understood through the lens of control theory. Neuroscientists and psychophysics researchers have been fascinated by intriguing phenomena like the MacKay effect \cite{mackay1957,mackay1961} during which people experience visual illusions. In contrast to a visual hallucination, which refers to the perception of an image that does not exist or is not present in front of the person who has experienced it, a visual illusion often occurs when external stimuli trick our brain into perceiving something differently from its actual state. 

In the last decades, investigations of mechanisms underlying the spontaneous perception of visual hallucination patterns have been widely undertaken in the literature using neural dynamics in the primary visual cortex (hereafter referred to as V1) when its activity is due solely to the random firing of its spiking neurons, that is in the absence of sensory inputs from the retina, \cite{bressloff2001,ermentrout1979,golubitsky2003,tass1995,bressloff2002}. In their seminal work \cite{ermentrout1979}, by using bifurcation techniques near a Turing-like instability, Ermentrout and Cowan found that the $2$-dimensional two-layer neural fields equation modelling the average membrane potential of spiking neurons in V1 derived by Wilson and Cowan in \cite{wilson1973} is sufficient to theoretically describe the spontaneous formation (i.e., in the absence of visual sensory inputs) of some geometric patterns (horizontal, vertical and oblique stripes, square, hexagonal and rectangular patterns, etc.) in V1. These patterns result from activity spreading over this brain area and correspond to states of highest cortical activities. When we transform these patterns by the inverse of the retino-cortical map from V1 onto the visual field \cite{tootell1982,schwartz1977}, what we obtain in the retina in terms of images are geometric visual hallucinations. They correspond to some of the form constants that Klüver had meticulously classified \cite{kluver1966}, mainly those contrasting regions of white and black (funnel, tunnel, spiral, checkerboard, phosphenes). Therefore, the neural dynamic equation used to model the cortical activity in V1 combined with the bijective nonlinear retino-cortical mapping between the visual field and V1 predicts the geometric forms of hallucinatory patterns. 

While spontaneous patterns that emerge in V1 give us insight into the underlying architecture of the brain’s neural network, little is known about how precisely the intrinsic circuitry of the primary visual cortex generates the patterns of activity that underlie the visual illusions induced by visual stimuli from the retina. We study in this paper the interaction between retinal stimulation by geometrical patterns and the cortical response in the primary visual cortex, focusing on the MacKay effect \cite{mackay1957} replication using control of Amari-type equation. As a control term, we consider the sensory input from the retina modelling the V1 representation via the retino-cortical map of the visual stimulus used in this intriguing visual phenomenon.

\subsection{The visual MacKay effect}

\begin{figure}
	\centering
	\includegraphics[width=.32\linewidth]{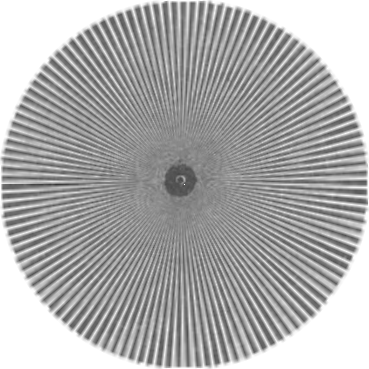}\hspace{1em}
	\includegraphics[width=.32\linewidth]{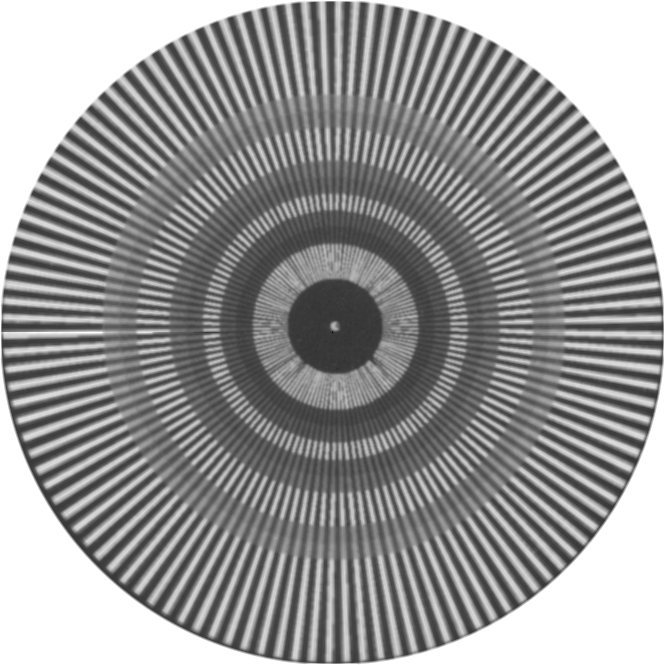}
	\caption{The MacKay effect \cite{mackay1957}, showcasing the illusion induced by the stimulus on the left, referred to as ``MacKay rays''. This stimulus leads to an illusory perception of concentric rings superimposed in its background, as illustrated on the right. To see the illusory contours, look at the centre of the black circle in the image on the \emph{left}. The adaptation of this figure is based on the original representation from \cite[Fig.~1]{mackay1957} and \cite[Fig.~1b]{zeki1993}.}
	\label{fig:mackay}
\end{figure}

Around 1960, Donald MacKay made notable observations on the after-effects of visual stimulation using regular geometrical patterns containing highly redundant information. He associated this phenomenon, now known as the “MacKay effect”, with a specific region of the visual cortex that potentially benefits from such redundancy \cite{mackay1957}. The psychophysical experiments presented in this paper demonstrate that when a highly redundant visual stimulus, such as a funnel pattern (fan shapes), is presented at the center of the stimulus, an accompanying illusory tunnel pattern (concentric rings) emerges in the visual field, superimposed onto the stimulus pattern (see Fig.~\ref{fig:mackay}). 

Notably, the distance from the pattern to the retina or the illumination does not significantly affect these more intricate phenomena. For most observers, the illusory contours in the background of the afterimage rotate rapidly at right angles to the stimulus pattern, either clockwise or counterclockwise.
Similarly, many observers perceive an illusory funnel pattern superimposed in the afterimage background when viewing a tunnel pattern as that of Figure~\ref{fig::MacKay target}, the right panel. In both cases, observers often note rapidly fluctuating sectors, again rotating either clockwise or counterclockwise. Notably, the stimulus pattern does not need to fill the entire visual field; a portion of the stimulus is sufficient to generate a corresponding afterimage in the same portion. However, in both cases, the nervous system tends to prefer the direction perpendicular to the regular contours of the visual stimulus. The present paper proposes to attribute this preference to the retino-cortical map, resulting in induced afterimages of superimposed patterns of horizontal and vertical stripes in V1.

\subsection{Strategy of study and presentation of our results }
In Section~\ref{ss::SS}, we expound on our strategy to theoretically replicate the visual MacKay effect that we recalled in the previous section. Subsequently, in Section~\ref{ss::presentation of results}, we present and discuss our findings.

This work originated in \cite{tamekue2022}, where we developed a novel approach to describe the MacKay effect specifically, associated with a funnel pattern containing high localized information in the center of the image (\textit{created by the very fast alternation of black and white rays}) \cite{mackay1957}. Instead of relying on traditional mathematical tools such as bifurcation analysis, perturbation theories, or multi-scale analysis, commonly used to address neuroscience questions, we sought alternative methods via control of the Amari-type neural fields.


\begin{figure}
	\centering
	\includegraphics[width=.31\linewidth]{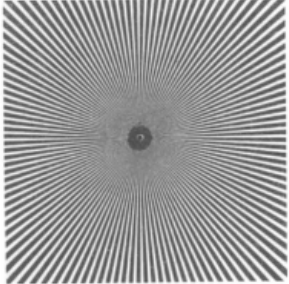}
	\hspace{2em}
	\includegraphics[width=.3\linewidth]{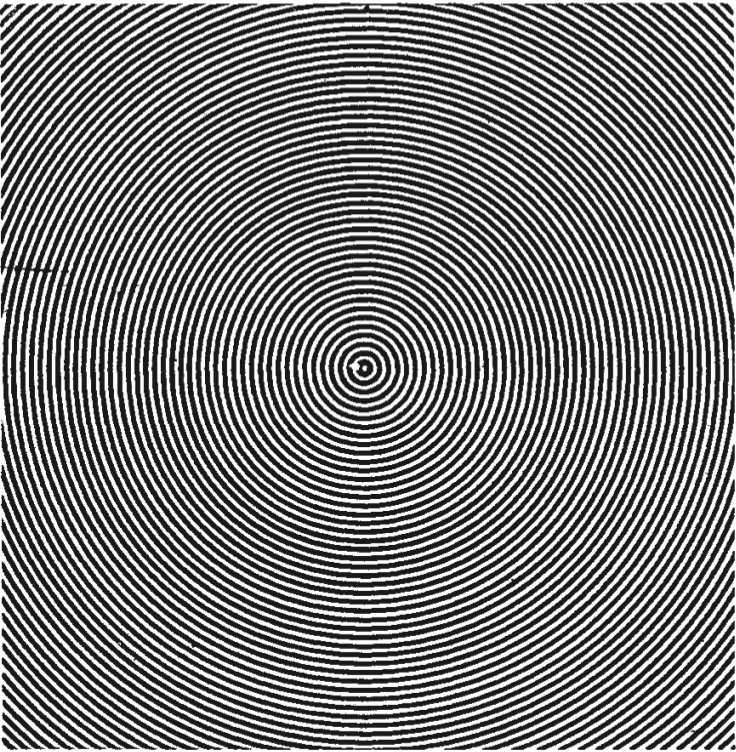}
	\caption{The stimuli used in \cite{mackay1957}. \emph{Left:} ``MacKay-rays''.
		\emph{Right:} ``MacKay-target''. }
	\label{fig::MacKay target}
\end{figure}

Indeed, these classical mathematical tools are highly suitable for describing phenomena like spontaneous geometric visual hallucinations that emerge in the visual field due to sudden qualitative changes in specific physiological parameters \cite{bressloff2001,ermentrout1979,golubitsky2003,tass1995}. They also prove effective in understanding sensory-driven and self-organized cortical activity interactions when the visual stimulus exhibits regular shape and complete distribution across the visual field, with symmetry respecting a subgroup of the Euclidean group \cite{nicks2021}. In simple terms, these tools are appropriate when dealing with equations that exhibit complete equivariance (commutation) with respect to a given group, typically the Euclidean group. However, the original MacKay stimulus, known as the ``MacKay rays'' (refer to Fig.~\ref{fig::MacKay target}), consists of funnel patterns with high localized information in the center. As a result, the Euclidean symmetry of the V1 representation via the retino-cortical map  of the funnel patterns is broken (see Section~\ref{ss:: Binary pattern}). This corresponds to the fact that the funnel pattern is invariant by dilations in the visual plane, while the ``MacKay rays'' are not.
As a consequence, the Amari-type neural field describing the V1 activity induced by ``MacKay rays'' does not present any symmetry.
Accordingly, neither the works of \cite{bressloff2001,ermentrout1979} nor \cite{nicks2021} can be directly employed to describe these complex visual phenomena. In particular, the work \cite{nicks2021} theoretically replicate a variant of the MacKay effect where the visual stimulus is not the ``MacKay rays'' nor the ``MacKay target'' (see Figure~\ref{fig::MacKay target}, right) but a regular (symmetric with respect to some subgroups of the plane Euclidean group) funnel or tunnel patterns, which is fully distributed in the visual field.

\subsubsection{Strategy of study}\label{ss::SS}

In our study, we begin by assuming that neurons in V1 are interconnected in a homogeneous and isotropic manner. Accordingly, we employ the following Amari-type equation \cite[Eq. (3)]{amari1977} to describe the average membrane potential of V1 spiking neurons that take into account the sensory input from the retina:
\begin{equation}\label{eq::NF-intro}\tag{NF}
	\partial_t a = - a +\mu \omega\ast f(a) + I.
\end{equation}
Here $a:\R_{+}\times\mathbb R^2\to \R$ is a function of time $t\in\R_{+}$ and the position $x\in\R^2$, the sensory input $I$ represents the projection of the visual stimulus into V1 by the retino-cortical map. The connectivity kernel $\omega(x,y)=\omega(|x-y|)$ models the strength of connections between neurons located at positions $x\in \mathbb{R}^2$ and $y\in\mathbb{R}^2$. The function $f$ captures the nonlinear response of neurons after activation, while the parameter $\mu>0$ characterizes the intra-neural connectivity. The symbol $\ast$ denotes spatial convolution, as defined in \eqref{eq::spatial convolution} below.


We observe that it would be natural to model V1 as a bounded domain $\Omega$, instead of $\mathbb{R}^2$. We stress that due to the strongly localized connectivity kernel that we consider, our results could in principle be extended to this case, at least to describe neuronal activity sufficiently far from the boundary of $\Omega$.
Moreover, although the more plausible biological neuronal dynamics in V1 involve considering the orientation preferences of ``simple cells'' as done in \cite[Eq. (1)]{bressloff2001} when describing contoured spontaneous cortical patterns, we neglect the orientation label entirely and focus on equation \eqref{eq::NF-intro}. This simplification is motivated by the fact that equation \eqref{eq::NF-intro} is sufficient for describing spontaneous funnel and tunnel patterns, and we expect it also to be suitable for describing psychophysical experiments involving these patterns.


In psychophysical experiments, observers perceive an illusory afterimage in their visual field when viewing the visual stimulus, and this afterimage persists for a few seconds. Therefore, describing these intriguing visual phenomena in V1 using equation \eqref{eq::NF-intro} relies on explicitly studying the map $\Psi$, which associates the sensory input $I$ with its corresponding stationary output $\Psi(I)$. The stationary output represents the stationary solution of equation \eqref{eq::NF-intro} for a given $I$. Our goal is to prove that the cortical activity $a(t,\cdot)$, which is the solution of equation \eqref{eq::NF-intro}, exponentially stabilizes towards $\Psi(I)$ as $t\to+\infty$. Then, we perform qualitative and quantitative study of this stationary state in a convenient space.


Before performing the asymptotic study (qualitatively and quantitatively) of the input to stationary output map for modelling the visual MacKay effect, we investigate the exact controllability properties of the Amari-type control system \eqref{eq::NF-intro} where we interpret the sensory input $I$ as a distributed control over $\R^2$ that we use to act on the system state modelled by the cortical activity $a$.
In that direction, we prove that \eqref{eq::NF-intro} is exactly controllable, in the sense explained previously, except for certain particular functional frameworks.


\subsubsection{Presentation of results}\label{ss::presentation of results}
In our previous paper \cite{tamekue2022}, we established that to accurately model the visual stimuli used, for instance, in the MacKay effect associated with the ``MacKay rays'' visual stimulus, it is crucial to consider the highly localized information present in the center of the funnel patterns as created by the very fast alternation of black and white rays.
This observation arises from the underlying Euclidean symmetry of V1, which imposes restrictions on the geometric shapes of sensory inputs capable of inducing cortical illusions in V1. Interestingly, this mathematical evidence supports the observation previously made by MacKay in paragraph 2 of \cite{mackay1957}: \textit{``$[\cdots]$ in investigations of the visual information system, it might be especially interesting to observe the effect of highly redundant information patterns since the nervous system might conceivably have its own ways of profiting from such redundancy $[\cdots]$''}.

To model the redundant information in the center of the funnel pattern created by the very fast alternation of black and white rays, we employed the characteristic function of a small disk in the center of the visual field as a control function. Through numerical simulations, we suggested that equation \eqref{eq::NF-intro}, together with an \textit{odd} sigmoidal response function, successfully reproduces the MacKay effect associated with the ``MacKay rays''. We employed a similar approach to reproduce the MacKay effect associated with the ``MacKay target'', except that the control function was chosen as the characteristic function of two symmetric rays converging towards the center of the image (refer to Remark~\ref{rmk::comments on the MacKay target modelling} for more details on this modelling).

Having established that equation \eqref{eq::NF-intro}, with appropriate modelling of MacKay visual stimuli, reproduces this phenomenon, our next objective was to provide a mathematical proof of the numerical results obtained in \cite{tamekue2022}. Therefore, in  \cite{tamekue2023}, we discovered that the linearized version of \eqref{eq::NF-intro} is sufficient to describe and replicate the MacKay effect, indicating that the nonlinear nature of the response function does not play a role in its reproduction. Specifically, the saturation effect only serves to dampen high oscillations that can occur in V1. 

In this paper, we provide a mathematical modelling of the visual MacKay effect using Equation~\eqref{eq::NF-intro}, employing complex and harmonic analysis tools and sharp inequality estimates. Specifically, we exploit {Fourier analysis}. 

To the authors' knowledge, the only attempt to theoretically replicate the MacKay-like phenomenon using neural fields equations has been undertaken by Nicks \textit{et al.} \cite{nicks2021}. They employed a model of cortical activity in V1, which included spike-frequency adaptation (SFA) of excitatory neurons, and utilized bifurcation and multi-scale analysis near a Turing-like instability to describe the MacKay-type effect associated with a fully distributed state-dependent sensory input representing cortical representations of funnel and tunnel patterns. By assuming a balanced condition\footnote{A kernel $\omega$ of ``Mexican-hat'' type distribution satisfies the balanced condition (between excitatory and inhibitory neurons) if its Fourier transform at zero equals $0$.} on the interaction kernel, they derived a dynamical equation for the amplitude of the stationary solution near the critical value $\mu_c$ (see Equation~\eqref{eq::parameter mu_c} below) of the parameter $\mu$ where spontaneous cortical patterns emerge in V1. Their theoretical results do not apply to localized inputs, as those employed by MacKay \cite{mackay1957}.

In the present study, to address the specificity of the sensory inputs utilized in these psychophysical experiments (i.e., the highly localized information in MacKay's stimuli), we rely on a central assumption regarding the range of parameter $\mu$. We assume that $\mu$ is smaller than the threshold $\mu_c$, given as follows,
\begin{equation}\label{eq::parameter mu_c}
	\mu_c:= 
	\frac{1}{f'(0)\max\limits_{\xi\in\mathbb{R}^2}\hat\omega(\xi)},
\end{equation}
corresponding to the value of $\mu$ where cortical patterns spontaneously emerge in V1, \cite{ermentrout1979,bressloff2001}.

Finally, we stress once again that our focus lies in assessing the qualitative concordance between the outputs of the proposed models and the observed human perceptual response to these illusions reported by MacKay. It is imperative to emphasize that this inquiry is qualitative, demonstrating the potential utility of Amari-type dynamics in reproducing the perceptual distortions elicited by certain visual illusions.

\subsection{Structure of the paper}
The remaining of the paper is organized as follows: Section~\ref{s::GN} begins by introducing the general notations that will be utilized throughout the paper. 
We present assumptions on model parameters used in Equation~\eqref{eq::NF-intro} in Section~\ref{ss::Assumption on parameters}, and we define a binary pattern necessary to represent cortical activity in terms of white and black zones in Section~\ref{ss:: Binary pattern}. In Section~\ref{ss::WP}, we recall some preliminary results about the well-posedness of equation \eqref{eq::NF-intro}, and in Section~\ref{s::control Amari equation}, we discuss the exact controllability properties of Equation~\eqref{eq::NF-intro}. Using equation \eqref{eq::NF-intro}, in Section~\ref{s::MacKay mathematical interpretation}, we investigate the theoretical replication of the visual MacKay effect. In Section~\ref{ss::numerical results for the MacKay effect}, we present numerical results to bolster our theoretical study. Finally, we provide in the Appendix some technical Theorems that serve as complement results.

\subsection{General notations}\label{s::GN}

Unless otherwise stated, $p$ will denote a real number satisfying $1\le p\le\infty$, and $q$ will denote the conjugate to $p$ given by $1/p+1/q = 1$. We adopt the convention that the conjugate of $p=1$ is $q = \infty$ and vice-versa.

For $d\in\{1,2\}$, we denote by $L^p(\R^d)$ the Lebesgue space of class of real-valued measurable functions $u$ on $\R^d$ such that $|u|$ is integrable over $\R^d$ if $p<\infty$, and $|u|$ is essentially bounded over $\R^d$ when $p=\infty$. We endow these spaces with their standard norms
$$\|u\|_p^p = \int_{\R^d}|u(x)|^pdx,\qquad\mbox{and}\qquad \|u\|_\infty = \operatorname{ess}\sup_{x\in\R^d}|u(x)|.$$

We let $X_p:=C([0,\infty);L^p(\R^d))$ be the space of all real-valued functions $u$ on $\R^d\times[0,\infty)$ such that, $u(x,\cdot)$ is continuous on $[0,\infty)$ for $\mbox{a.e.},\; x\in\R^d$ and $u(\cdot,t)\in L^p(\R^d)$ for every $t\in[0,\infty)$. In $X_p$, we will use the following norm $	\|u\|_{L_t^\infty L_x^p} = \sup\limits_{t\ge 0}\|u(\cdot,t)\|_p$.

For $x\in\R^2$, we denote by $|x|$ its Euclidean norm, and the scalar product with $\xi\in\R^2$ is defined by $\langle x,\xi\rangle=x_1\xi_1+x_2\xi_2$.

We let $\cS(\R^d)$ be the Schwartz space of rapidly-decreasing $C^\infty(\R^d)$ functions, and $\cS'(\R^d)$ be its dual space, i.e., the space of tempered distributions. Then, $\cS(\R^d)\subset L^p(\R^d)$ and $L^p(\R^d)\subset\cS'(\R^d)$ continuously. The Fourier transform of $u\in \cS(\R^d)$ is defined by
\begin{equation}\label{eq::Fourier transform in S}
	\widehat{u}(\xi):= \cF\{u\}(\xi)=\int_{\R^d} u(x)e^{-2\pi i\langle x,\xi\rangle}dx,\qquad\qquad\forall\xi\in\R^d.
\end{equation}
We highlight that,
for $1\le p\le 2$, the above definition can be continuously extends to function $u\in L^p(\R^d)$ by density and Riesz-Thorin interpolation theorem. Whereas one can extend the above by duality to $\cS'(\R^d)$.
We recall that $\cF$ is a linear isomorphism from $\cS(\R^d)$ to itself and from $\cS'(\R)$ to itself.

The spatial convolution of two functions $u\in L^1(\R^d)$ and $v\in L^p(\R^d)$, $1\le p\le\infty$ is defined by
\begin{equation}\label{eq::spatial convolution}
	(u\ast v)(x) = \int_{\R^d}u(x-y)v(y)dy,\qquad x\in\R^d.
\end{equation}

Finally, the following notation will be helpful: if $F$ is a real-valued function defined on $\R^2$, we use  $F^{-1}(\{0\})$ to denote the zero level-set of $F$.

\section{Assumptions on parameters and binary representation of patterns}\label{s::model parameters}
We present in this section assumptions on model parameters used in Equation~\eqref{eq::NF-intro} and the definition of a binary pattern necessary to represent cortical activity in terms of white and black zones.
\begin{figure}
	\centering
	\includegraphics[width=.43\linewidth]{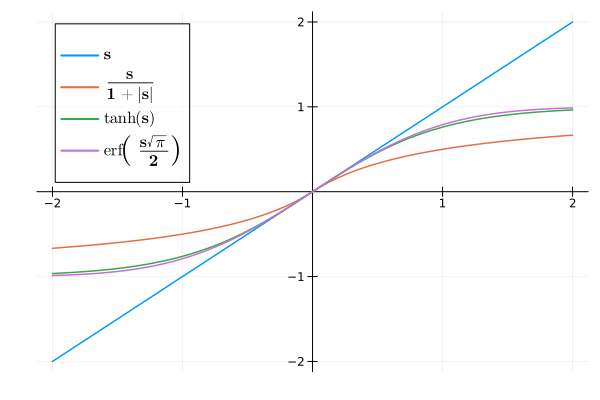}\hspace{0.1em}
	\includegraphics[width=.52\linewidth]{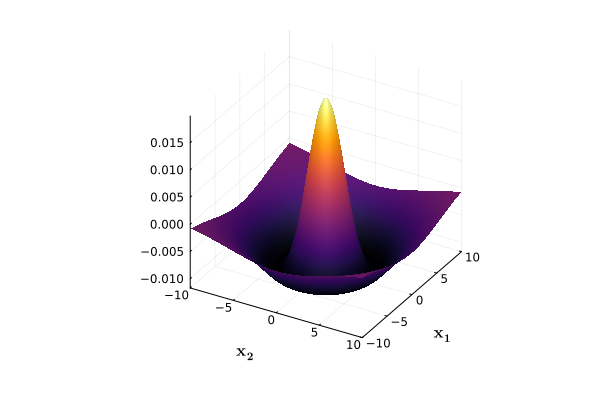}
	\caption{Possible response functions on the \textit{left} where $\erf$ is the Gauss error function, and on the \textit{right} a $2$D DoG kernel $\omega$. Here, $\kappa=2$, $\sigma_1=2$, and $\sigma_2=4$.}
	\label{fig:response function}
\end{figure}
\subsection{Assumption on parameters for Amari-type equation}\label{ss::Assumption on parameters}
We assume that the response function $f$ belongs to the class $C^2(\R)$, is non-decreasing, satisfies $f(0) = 0$, $f'(0) = \max_{s\in\R} f'(s)$, and $f''$ is also bounded so that $f'$ is Lipschitz continuous. Please refer to Figure~\ref{fig:response function} (image on the left) for an example of a response function. 

Unless otherwise stated, we consider $f$ to be linear or a nonlinear and sigmoid function, such that $\|f\|_\infty = 1$, $f'(0) = 1$. {This is without loss of generality} since, as long as $f'(0)\neq 0$, we can always define a sigmoid function $\widetilde{f}(s) = f(\lambda s)/\|f\|_\infty$ with $\lambda=\|f\|_\infty/f'(0)$ and $s\in\R$.
%

The interaction kernel $\omega$ is chosen to be the following difference of Gaussians (DoG)
\begin{equation}\label{eq::connectivity}
	\omega(x) = [2\pi\sigma_1^2]^{-1}e^{-\frac{|x|^2}{2\sigma_1^2}}-\kappa[2\pi\sigma_2^2]^{-1}e^{-\frac{|x|^2}{2\sigma_2^2}},\qquad x\in\R^2,
\end{equation}
where $\kappa>0$, $0<\sigma_1<\sigma_2$, and $\sigma_1\sqrt{\kappa}<\sigma_2$. It is worth noting that this choice of interaction kernel aligns with the framework employing Equation~\eqref{eq::NF-intro} to generate spontaneous cortical patterns in V1.

In particular, $\omega$ is homogeneous and isotropic with respect to the spatial coordinates. It solely depends on the Euclidean distance between neurons and exhibits rotational symmetry. The first (positive) Gaussian in \eqref{eq::connectivity} describes short-range excitation interactions, while the second (negative) Gaussian represents long-range inhibition interactions between neurons in V1.


It is important to observe that $\omega(x) = \omega(|x|)$ and that $\omega$ belongs to the Schwartz space $\cS(\R^2)$, implying that $\omega\in L^p(\R^2)$ for all real numbers $1\le p\le\infty$. The Fourier transform of $\omega$ can be explicitly expressed as
\begin{equation}\label{eq::Fourier transform of the kernel omega}
	\widehat{\omega}(\xi)= e^{-2\pi^2\sigma_1^2|\xi|^2}-\kappa e^{-2\pi^2\sigma_2^2|\xi|^2},\qquad\qquad\qquad\forall\xi\in\R^2,
\end{equation}
and $\widehat{\omega}$ reaches its maximum at every vector $\xi_c\in\R^2$ such that 
\begin{equation}\label{eq::maximum of omegahat}
	|\xi_c|= q_c := \sqrt{\frac{\log\left(\frac{\kappa\sigma_2^2}{\sigma_1^2}\right)}{2\pi^2(\sigma_2^2-\sigma_1^2)}}\qquad\qquad\text{and}\qquad\qquad \max\limits_{r\ge 0}\widehat{\omega}(r) = \widehat{\omega}(q_c).
\end{equation}
{Finally}, the explicit expression for the $L^1$-norm of $\omega$ is given by
\begin{equation}\label{eq::L^1-norm of omega}
	\|\omega\|_1 = (1-\kappa)+2\left(\kappa e^{-\frac{\Theta^2}{2\sigma_2^2}}-e^{-\frac{\Theta^2}{2\sigma_1^2}}\right)\qquad\quad\text{with}\qquad\quad \Theta:= \sigma_1\sigma_2\sqrt{\frac{2\log\left(\frac{\sigma_2^2}{\kappa\sigma_1^2}\right)}{\sigma_2^2-\sigma_1^2}}.
\end{equation}

We emphasize that the kernel $\omega$ does not necessarily satisfy the \textit{balanced}\footnote{For a homogeneous NF equation (i.e., when $I = 0$), this condition ensures the existence of a unique stationary state $a_0=0$ even if $f(0)\neq 0$. It was assumed, for instance, in \cite{nicks2021} for deriving the amplitude equation of the stationary state near the bifurcation point $\mu_c$.} condition $\widehat{\omega}(0)=0$ between excitation and inhibition. However, this condition is met when $\kappa = 1$.



\subsection{Binary representation of patterns}\label{ss:: Binary pattern}

Let us start this section by briefly recalling the retino-cortical map that can be found in \cite{schwartz1977,bressloff2001}. Let $(r,\theta)\in[0,\infty)\times[0,2\pi)$ denote polar coordinates in the visual field (or in the retina) and $(x_1,x_2)\in\R^2$ Cartesian coordinates in $\v1$. The retino-cortical map (see also \cite{tamekue2022} and references within) is analytically given by 
\begin{equation}\label{eq::retino-cortical}
	r e^{i\theta}  \mapsto (x_1,x_2):=\left( \log r, \theta \right).
\end{equation}

Due to the retino-cortical map~\eqref{eq::retino-cortical}, funnel and tunnel patterns are respectively given in Cartesian coordinates $x:=(x_1,x_2)\in\R^2$ of V1 by
\begin{equation}\label{eq::funnel and tunnel patterns}
	\hspace{-0.3cm}  P_F(x) = \cos(2\pi\lambda x_2),\qquad\qquad P_T(x) = \cos(2\pi\lambda x_1),\quad\lambda>0.
\end{equation}
This choice is motivated by analogy with the (spontaneous) geometric hallucinatory patterns described in \cite{ermentrout1979} and \cite{bressloff2001}. Given the above representation of funnel and tunnel patterns in cortical coordinates, to see how they look in terms of images, we represent them as contrasting white and black regions, see Figure.~\ref{fig::FT pattern}. More precisely, define the binary pattern  $B_h$ of a function $h:\R^2\to\R$ by
\begin{equation}
	B_h(x) =
	\begin{cases}
		0, \quad \text{if } h(x)> 0 \quad\text{(black)}\\
		1, \quad \text{if } h(x)\le 0 \quad\text{(white)}.\\
	\end{cases}
\end{equation}
It follows that $B_h$ is essentially determined by the zero level-set of $h$.
Since stimuli involved in the MacKay effect are binary patterns, our strategy in describing these phenomena consists of characterising the zero-level set of output patterns. That is,
we are mainly devoted to studying the qualitative properties of patterns by viewing them as binary patterns.
\begin{figure}
	\centering
	\includegraphics[width=.485\linewidth]{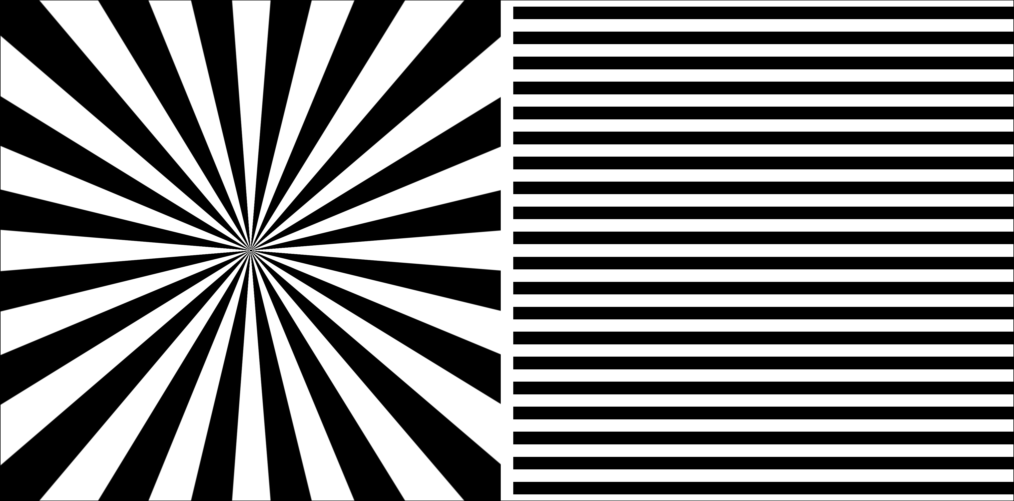}\hspace{0.9em}
	\includegraphics[width=.485\linewidth]{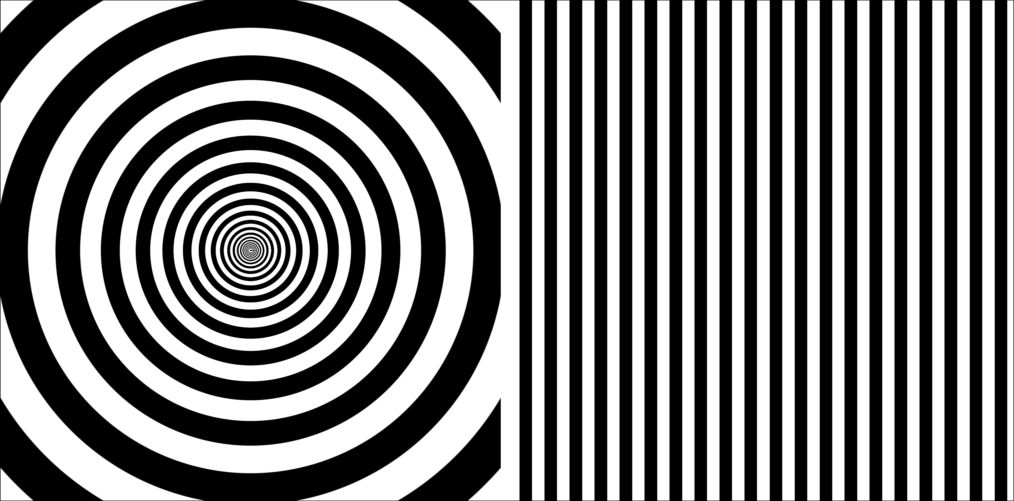}
	\caption{Funnel pattern on the \emph{left} (respectively in the retina and V1). Tunnel pattern on the \emph{right} (respectively in the retina and V1).}
	\label{fig::FT pattern}
\end{figure}
\section{Well-posedness of the Cauchy problem and stationary state}\label{ss::WP}
We recall in this section some preliminary results about the well-posedness of equation \eqref{eq::NF-intro}. We start this section by introducing the definition of stationary state to Equation~\eqref{eq::NF-intro}.
\begin{definition}[Stationary state]\label{def::stationary state to WC equation}
	Let $a_0\in L^p(\R^2)$. For every $I\in L^p(\R^2)$, a stationary state $a_I\in L^p(\R^2)$ to Equation~\eqref{eq::NF-intro} is  a time-invariant solution, viz.
	\begin{equation}\label{eq::SS}\tag{SS}
		a_I = \mu\omega\ast f(a_I)+I.
	\end{equation}
\end{definition}


	
	Using standard assumptions on the kernel $\omega$ or on the response function $f$, it is straightforward to obtain the existence of at least one (even non-constant) stationary state to Equation~\eqref{eq::NF-intro} when $I\equiv 0$, see for instance \cite{bressloff2001,ermentrout1979,nicks2021,curtu2004}. Moreover, in the case of an inhomogeneous equation posed on a bounded domain with a state-dependent sensory input, in \cite{brivadis2022}, under a mild condition on the boundness of the response function, the existence of at least one stationary state is proved using Schaefer’s fixed point Theorem, see also \cite{faugeras2009persistent}. However, in the face of an inhomogeneous equation posed on an unbounded domain as the case at hand, it can become a little bit more subtle to provide the existence of (non-constant) stationary state only with assumptions on $\omega$ and $f$.
	
	Consistent with the strategy that we use in this work, in order to obtain a unique non-constant stationary state to the inhomogeneous Equation~\eqref{eq::NF-intro}, we will make the following assumption on the intra-neuron parameter $\mu>0$,
	\begin{equation}\label{eq::parameter mu_0}
		\mu < \mu_0:= \|\omega\|_1^{-1}.
	\end{equation}
	Observe that $\mu_0\le \mu_c$, where the latter is the bifurcation point defined in \eqref{eq::parameter mu_c}. We stress that, when $p=2$, and under the balanced condition $\widehat{\omega}(0) = 0$, we can relax the above assumption to $\mu<\mu_c$, see Theorem~\ref{thm::existence of stationary input}.
	
	We collect in the following lemma some useful estimates that are immediate consequences of generalised Young-convolution inequality. 
	
	\begin{lemma}\label{Lem::nonlinear operator Q}
		Let $1\le p\le\infty$. The nonlinear operator $X_p\ni a\mapsto \omega\ast f(a)\in X_p$ is well-defined and Lipschitz continuous and
		\begin{equation}\label{eq::nonlinear map Q 1}
			\|\omega\ast f(a)-\omega\ast f(b)\|_{L_x^p L_t^\infty}\le \|\omega\|_1\|a-b\|_{L_x^p L_t^\infty},\qquad\forall a,b\in  X_p.
		\end{equation}
		Moreover,
		\begin{enumerate}
			\item If $a\in X_p$, then $\omega\ast f(a)\in X_{\infty}$ and
			\begin{gather}\label{eq::nonlinear map Q 2}
				\|\omega\ast f(a)\|_{L_x^\infty L_t^\infty}\le \|\omega\|_{q}\|a\|_{L_x^p L_t^\infty},\\
				\label{eq::nonlinear map Q 3}
				\|\omega\ast f(a)\|_{L_x^\infty L_t^\infty}\le \|\omega\|_1;
			\end{gather}
			\item If $a\in X_1$, then $\omega\ast f(a)\in X_p$,
			\begin{equation}\label{eq::nonlinear map Q 4}
				\|\omega\ast f(a)\|_{L_x^p L_t^\infty}\le \|\omega\|_{p}\|a\|_{L_x^1 L_t^\infty}.
			\end{equation}
		\end{enumerate}
	\end{lemma}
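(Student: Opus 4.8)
The plan is to derive everything from the generalised Young convolution inequality $\|\omega\ast g\|_r\le\|\omega\|_s\|g\|_p$ with $1+1/r=1/s+1/p$, applied pointwise in time, combined with the elementary facts that $f$ is globally Lipschitz with constant $f'(0)=1$ (since $f'(0)=\max f'$), that $f(0)=0$, and that $\|f\|_\infty=1$ in the nonlinear sigmoid case. I would first record the observation that for any $a,b\in X_p$ and any fixed $t\ge0$, the function $x\mapsto f(a(x,t))-f(b(x,t))$ lies in $L^p(\R^d)$ with $\|f(a(\cdot,t))-f(b(\cdot,t))\|_p\le\|a(\cdot,t)-b(\cdot,t)\|_p$ by the mean value theorem and $|f'|\le1$; in particular, taking $b\equiv0$, $\|f(a(\cdot,t))\|_p\le\|a(\cdot,t)\|_p$ and also $\|f(a(\cdot,t))\|_\infty\le\min(\|a(\cdot,t)\|_\infty,1)$ (the bound by $1$ only in the sigmoid case, but $\|\omega\ast f(a)\|_\infty\le\|\omega\|_1\|f(a)\|_\infty\le\|\omega\|_1$ uses $\|f\|_\infty\le1$, consistent with \eqref{eq::nonlinear map Q 3}).

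Next I would verify well-definedness and continuity in time: since $\omega\in\cS(\R^2)\subset L^1\cap L^\infty$, Young's inequality gives $\omega\ast f(a(\cdot,t))\in L^p(\R^d)$ for each $t$, and continuity of $t\mapsto\omega\ast f(a(\cdot,t))$ in $L^p$ follows from continuity of $t\mapsto a(\cdot,t)$ in $L^p$ together with the Lipschitz estimate \eqref{eq::nonlinear map Q 1} (which shows the map is itself Lipschitz, hence continuous, from $X_p$ to $X_p$). Thus $a\mapsto\omega\ast f(a)$ maps $X_p$ into $X_p$. The Lipschitz bound \eqref{eq::nonlinear map Q 1} itself is immediate: for each $t$,
\begin{equation*}
\|\omega\ast f(a(\cdot,t))-\omega\ast f(b(\cdot,t))\|_p\le\|\omega\|_1\,\|f(a(\cdot,t))-f(b(\cdot,t))\|_p\le\|\omega\|_1\,\|a(\cdot,t)-b(\cdot,t)\|_p,
\end{equation*}
and then take the supremum over $t\ge0$.

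For the remaining four estimates I would apply Young's inequality with the appropriate exponents, pointwise in $t$, and then take suprema. For \eqref{eq::nonlinear map Q 2}, use $1+1/\infty=1/q+1/p$, i.e. $\|\omega\ast g\|_\infty\le\|\omega\|_q\|g\|_p$, with $g=f(a(\cdot,t))$ and $\|g\|_p\le\|a(\cdot,t)\|_p$. For \eqref{eq::nonlinear map Q 3}, use $\|\omega\ast g\|_\infty\le\|\omega\|_1\|g\|_\infty\le\|\omega\|_1$ since $\|f\|_\infty\le1$. For \eqref{eq::nonlinear map Q 4}, use $1+1/p=1/p+1/1$, i.e. $\|\omega\ast g\|_p\le\|\omega\|_p\|g\|_1$, with $\|g\|_1\le\|a(\cdot,t)\|_1$. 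In each case the membership claim (that the convolution lies in $X_\infty$, resp. $X_p$) comes from the same Young estimate plus the time-continuity argument above. There is no real obstacle here — the only mild subtlety is the bookkeeping on which functional space the output lands in and making the continuity-in-$t$ claim precise, but this is routine; the entire lemma is a direct consequence of Young's inequality applied slice-by-slice in time together with the contraction property of $f$.
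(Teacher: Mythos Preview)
Your proposal is correct and matches the paper's approach exactly: the paper does not give a detailed proof but simply states that these estimates ``are immediate consequences of generalised Young-convolution inequality,'' which is precisely what you carry out slice-by-slice in time together with the $1$-Lipschitz property of $f$ and the bound $\|f\|_\infty\le1$. Your handling of the time-continuity and well-definedness is appropriate bookkeeping that the paper leaves implicit.
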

	
	In the following theorem, we prove the existence of a unique solution and a unique stationary state of the Cauchy problem associated with Equation~\eqref{eq::NF-intro}.
	\begin{theorem}\label{thm::existence of stationary input}
		Let $1\le p\le\infty$ and $I\in L^p(\R^2)$. For any initial datum $a_0\in L^p(\R^2)$, there exists a unique $a\in X_p$, solution of Equation~\eqref{eq::NF-intro}. 
		Moreover, there exists a unique stationary state $a_I\in L^p(\R^2)$ to \eqref{eq::NF-intro} under the following assumptions:
		\begin{itemize}
			\item[i.] If $\mu<\mu_0$, then
			\begin{equation}\label{eq::Decay of solution}
				\|a(\cdot,t)-a_I(\cdot)\|_{p}\le e^{-(1-\mu \|\omega\|_1)t}\|a_0(\cdot)-a_I(\cdot)\|_p,
				{\qquad \text{for any } t\ge 0.}
			\end{equation}
			\item[ii.] If $p=2$, $\mu<\mu_c$, and $\hat\omega(0)=0$, then
			\begin{equation}\label{eq::Decay of solution in L 2}
				\|a(\cdot,t)-a_I(\cdot)\|_{2}\le e^{-(1-\mu\|\widehat{\omega}\|_\infty)t}\|a_0(\cdot)-a_I(\cdot)\|_2,
				{\qquad \text{for any } t\ge 0.}
			\end{equation}
		\end{itemize}
	\end{theorem}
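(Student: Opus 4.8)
The plan is to set up a fixed-point argument on the Banach space $X_p$ for the Cauchy problem, and then a separate contraction argument in $L^p(\R^2)$ for the stationary equation \eqref{eq::SS}, with the decay estimates \eqref{eq::Decay of solution} and \eqref{eq::Decay of solution in L 2} following by a Gr\"onwall-type comparison between the two. First I would rewrite \eqref{eq::NF-intro} with initial datum $a_0$ in its Duhamel (mild) form,
\begin{equation*}
	a(\cdot,t) = e^{-t}a_0 + \int_0^t e^{-(t-s)}\bigl(\mu\,\omega\ast f(a(\cdot,s)) + I\bigr)\,ds,
\end{equation*}
and define the map $\Phi$ sending $a\in X_p$ to the right-hand side. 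Using Lemma~\ref{Lem::nonlinear operator Q}, in particular \eqref{eq::nonlinear map Q 1}, one checks $\Phi$ maps $X_p$ into itself (the $e^{-t}a_0$ and $\int_0^t e^{-(t-s)}I\,ds$ terms are handled directly, noting $\int_0^t e^{-(t-s)}ds\le 1$), and that on the time interval $[0,T]$ it is a contraction with constant $\mu\|\omega\|_1(1-e^{-T})<1$ for $T$ small; alternatively one gets a global contraction on $X_p$ by working with the equivalent weighted norm $\sup_{t\ge0}e^{-\beta t}\|a(\cdot,t)\|_p$ for $\beta$ large, or by iterating the local result on successive intervals since the contraction constant is uniform. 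This yields the unique global solution $a\in X_p$. (If the paper's well-posedness is only being \emph{recalled}, one may instead just cite \cite{brivadis2022,faugeras2009persistent,curtu2004} for this part and concentrate on the stationary state and decay, which is the genuinely new content here.)

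For the stationary state, define $\mathcal{T}:L^p(\R^2)\to L^p(\R^2)$ by $\mathcal{T}(v) = \mu\,\omega\ast f(v) + I$. In case (i), $\mu<\mu_0=\|\omega\|_1^{-1}$, the generalized Young inequality gives $\|\omega\ast f(v)-\omega\ast f(w)\|_p\le\|\omega\|_1\|f(v)-f(w)\|_p\le\|\omega\|_1\|v-w\|_p$ (using $f(0)=0$, $|f'|\le f'(0)=1$ so $f$ is $1$-Lipschitz), hence $\mathcal{T}$ is a contraction with constant $\mu\|\omega\|_1<1$ and Banach's fixed-point theorem delivers a unique $a_I\in L^p(\R^2)$. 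In case (ii), $p=2$, $\widehat\omega(0)=0$, $\mu<\mu_c$: here I would use Plancherel, writing $\|\omega\ast g\|_2=\|\widehat\omega\,\widehat g\|_2\le\|\widehat\omega\|_\infty\|g\|_2$, so that $\mathcal{T}$ is Lipschitz in $L^2$ with constant $\mu\|\widehat\omega\|_\infty=\mu f'(0)\max\widehat\omega<1$ by the definition \eqref{eq::parameter mu_c} of $\mu_c$ — giving uniqueness and existence of $a_I\in L^2(\R^2)$ again by Banach. (The hypothesis $\widehat\omega(0)=0$ is what is actually doing work conceptually: it guarantees $a_0=0$ is \emph{not} forced and, more to the point here, is consistent with the claim being a genuine relaxation of $\mu<\mu_0$ to $\mu<\mu_c$; technically in the contraction it enters only through $\max\widehat\omega$ possibly being attained not at $0$, but one should double-check whether the paper in fact needs $\widehat\omega(0)=0$ to control the Lipschitz constant of $f$ composed with the convolution in $L^2$ — if $f$ is genuinely nonlinear, $f(v)\in L^2$ requires a Nemytskii-type estimate, which is fine since $f$ is $1$-Lipschitz with $f(0)=0$.)

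Finally, for the exponential decay: set $w(\cdot,t):=a(\cdot,t)-a_I$. Subtracting \eqref{eq::SS} from \eqref{eq::NF-intro}, $\partial_t w = -w + \mu\,\omega\ast(f(a)-f(a_I))$, so in mild form $w(\cdot,t)=e^{-t}w(\cdot,0)+\mu\int_0^t e^{-(t-s)}\omega\ast(f(a(\cdot,s))-f(a_I))\,ds$. Taking $\|\cdot\|_p$ and using the Lipschitz bound ($\|\omega\|_1$ in case (i), $\|\widehat\omega\|_\infty$ in case (ii)) gives
\begin{equation*}
	\|w(\cdot,t)\|_p \le e^{-t}\|w(\cdot,0)\|_p + \mu L\int_0^t e^{-(t-s)}\|w(\cdot,s)\|_p\,ds,
\end{equation*}
with $L=\|\omega\|_1$ or $L=\|\widehat\omega\|_\infty$ respectively. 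Multiplying by $e^{t}$ and applying Gr\"onwall's inequality to $t\mapsto e^{t}\|w(\cdot,t)\|_p$ yields $e^{t}\|w(\cdot,t)\|_p\le \|w(\cdot,0)\|_p\,e^{\mu L t}$, i.e. $\|w(\cdot,t)\|_p\le e^{-(1-\mu L)t}\|w(\cdot,0)\|_p$, which is exactly \eqref{eq::Decay of solution} and \eqref{eq::Decay of solution in L 2}; the rate is positive precisely because $\mu L<1$ under the respective hypotheses. The main obstacle I anticipate is not any single estimate — each is a routine consequence of Young/Plancherel plus Lipschitz continuity of $f$ — but rather bookkeeping the two functional frameworks in parallel and, in case (ii), making sure the $L^2$ theory is genuinely consistent (that $f(v)\in L^2(\R^2)$ for $v\in L^2(\R^2)$, that the solution map stays in $X_2$, and that $\widehat\omega(0)=0$ is used exactly where claimed rather than silently elsewhere); a secondary subtlety is upgrading the local-in-time existence to global, which here is painless because the contraction constant does not degrade in $T$.
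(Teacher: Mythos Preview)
Your proposal is correct and matches the paper's proof essentially step for step: the paper also treats \eqref{eq::NF-intro} as an ODE in $X_p$ with globally Lipschitz right-hand side (citing Lemma~\ref{Lem::nonlinear operator Q}), obtains the stationary state via the contraction $\Phi_I(u)=I+\mu\,\omega\ast f(u)$ on $L^p$, and derives the decay by writing $b=a-a_I$ in variation-of-constants form and applying Gr\"onwall exactly as you do.

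The one point you flagged as uncertain is worth resolving: the hypothesis $\widehat\omega(0)=0$ is used precisely to guarantee $\widehat\omega(\xi)\ge 0$ for all $\xi$ (this holds for the DoG kernel once $\kappa=1$), so that $\|\widehat\omega\|_\infty=\max_\xi\widehat\omega(\xi)=\widehat\omega(q_c)=1/\mu_c$; without it, $\|\widehat\omega\|_\infty$ could exceed $\widehat\omega(q_c)$ and the Plancherel contraction constant $\mu\|\widehat\omega\|_\infty$ would no longer be controlled by $\mu<\mu_c$.
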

	\begin{proof}
		Equation~\eqref{eq::NF-intro} can be seen as an ordinary differential equation in $ X_p$, whose r.h.s. is a (globally) Lipschitz continuous map from $ X_p$ to itself by Lemma~\ref{Lem::nonlinear operator Q}. It is then standard to obtain that for any initial datum $a_0\in L^p(\R^2)$, Equation~\eqref{eq::NF-intro} has a unique solution $a\in X_p$ (see, for instance, \cite{veltz2010local,da2011existence}). Moreover, the map $\Phi_I:L^p(\R^2)\to L^p(\R^2)$ defined for all $u\in L^p(\R^2)$ by $\Phi_I(u) = I+\mu\omega\ast f(u)$ satisfies
		$$
		\|\Phi_I(v)-\Phi_I(u)\|_p\le\frac{\mu}{\mu_0}\|v-u\|_p,\qquad \forall u,v\in L^p(\R^2),
		$$
		due to inequality \eqref{eq::nonlinear map Q 1}. Since $\mu<\mu_0$, the existence of a unique stationary state $a_I\in L^p(\R^2)$ is obtained by invoking the contraction mapping principle.
		
		We now present an argument of proof for Item \emph{i.} of the statement.
		Set
		\begin{equation}\label{eq::form of solutions}
			b(x,t) = a(x,t)-a_{I}(x),\qquad (x,t)\in\R ^2\times[0,\infty),
		\end{equation}
		It follows that $b$ is the solution of the following initial value Cauchy problem
		\begin{equation}\label{eq::WC 2}
			\begin{cases}
				\partial_tb(x,t) = -b(x,t)+\mu\displaystyle\int_{\R^2}\omega(x-y)[f(b(y,t)+a_{I}(y))-f(a_{I}(y))]dy,& (x,t)\in\R ^2\times[0,\infty),\cr
				b(x,0) =a_0(x)-a_{I}(x),&x\in\R^2,
			\end{cases}
		\end{equation}
		which belongs to $C([0,\infty);L^p(\R^2))\cap C^1((0,\infty);L^p(\R^2))$. Moreover, $b$ satisfies the following variations of constant formula
		\begin{equation}
			b(x,t)=e^{-t}b(x,0)+ \mu\int_{0}^{t}e^{-(t-s)}\int_{\R^2}\omega(x-y)[f(b(y,t)+a_{I}(y))-f(a_{I}(y))]dy,
		\end{equation}
		for all $(x,t)\in\R ^2\times[0,\infty)$.
		
		Taking the $L^p(\R^2)$-norm of the above identity, we find for every $t\ge 0$,
		\begin{equation}\label{eq::Majoration of the norm of solution}
			\|b(\cdot,t)\|_p\le e^{-t}\|b(\cdot,0)\|_p+\mu \|\omega\|_1\int_{0}^{t}e^{-(t-s)}\|b(\cdot,s)\|_pds.
		\end{equation}
		Applying Gronwall's Lemma to inequality \eqref{eq::Majoration of the norm of solution} one deduces for every $t\ge 0$,
		$$
		\|b(\cdot,t)\|_{p}\le e^{-(1-\mu \|\omega\|_1)t}\|b(\cdot,0)\|_p.
		$$
		This proves the inequality \eqref{eq::Decay of solution} and completes the proof of \emph{i.}.
		
		Let us now prove item \emph{ii.} of the statement. In this case $p=2$, $\mu<\mu_c$, and $\hat\omega(0)=0$.
		The latter condition implies that $\widehat{\omega}(|\xi|)\ge 0$ for all $\xi\in\R^2$. In particular, $\widehat{\omega}(q_c) = \max\limits_{r\ge 0}\widehat{\omega}(r)=\|\widehat{\omega}\|_\infty$. Recall that $\Phi_I(u)=I+\mu\omega\ast f(u)$. Then,  by Plancherel identity, the following holds for all $u,v\in L^2(\R^2)$,
		\begin{eqnarray}\label{eq::Plancherel}
			\|\Phi_I(v)-\Phi_I(u)\|_2= \|\widehat{\Phi_I(v)}-\widehat{\Phi_I(u)}\|_2&=&\mu\|\widehat{\omega}(\widehat{f(u)}-\widehat{f(v)})\|_2\nonumber\\
			&\le&\mu\|\widehat{\omega}\|_\infty\|\widehat{f(u)}-\widehat{f(v)}\|_2\nonumber\\
			&=&\mu\widehat{\omega}(q_c)\|f(u)-f(v)\|_2\nonumber\\
			&\le&\frac{\mu}{\mu_c}\|u-v\|_2.
		\end{eqnarray}
		Here, the last inequality follows from $\mu_c = \hat\omega(q_c)^{-1}$ and the fact that $f$ is $1$-Lipschitz. Since $\mu<\mu_c$, the existence of a unique stationary state $a_I\in L^2(\R^2)$ is obtained by invoking the contraction mapping principle. We complete the proof by arguing as in the previous point and replacing \eqref{eq::Majoration of the norm of solution} by
		\begin{equation}
			\|b(\cdot,t)\|_2\le e^{-t}\|b(\cdot,0)\|_2+\mu \|\hat\omega\|_\infty\int_{0}^{t}e^{-(t-s)}\|b(\cdot,s)\|_2 ds. \qedhere
		\end{equation}
	\end{proof}
	
	Due to Theorem~\ref{thm::existence of stationary input}, we can introduce the following.
	\begin{definition}
		Let $1\le p\le\infty$, the nonlinear input-output map $\Psi:L^p(\mathbb R^2)\to L^p(\mathbb R^2)$ is defined by
		\begin{equation}\label{eq::map Psi}
			\Psi(I) = I+\mu\omega\ast f(\Psi(I)),
			{\qquad\text{for all } I\in L^2(\mathbb{R}^2)}.
		\end{equation}
	\end{definition}
	
	\begin{proposition}
		Let $1\le p\le\infty$ and assume that $\mu<\mu_0$. Then,
		\begin{enumerate}
			\item The map $\Psi$ is well-defined, bi-Lipschitz continuous, and it holds
			\begin{equation}\label{eq::upper bound on Psi_I}
				\|\Psi(I)\|_p\le \frac{\mu_0}{\mu_0-\mu}\|I\|_p,\qquad\mbox{for all}\qquad I\in L^p(\mathbb R^2);
			\end{equation}
			\item If $1< p\le\infty$, the map $\Psi$ belongs to $C^1(L^p(\R^2);L^p(\R^2))$.
		\end{enumerate}
	\end{proposition}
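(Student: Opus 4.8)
The plan is to prove item (1) by combining the defining fixed-point identity \eqref{eq::map Psi} with the contraction estimate already established in the proof of Theorem~\ref{thm::existence of stationary input}, and to prove item (2) via the implicit function theorem in Banach spaces. For item (1), well-definedness of $\Psi$ is immediate from Theorem~\ref{thm::existence of stationary input} (take $a_0=I$, or simply invoke the existence of a unique stationary state): for each $I\in L^p(\R^2)$ the map $u\mapsto \Phi_I(u)=I+\mu\omega\ast f(u)$ is a contraction on $L^p(\R^2)$ with constant $\mu/\mu_0<1$, so $\Psi(I)$ is its unique fixed point. For the norm bound \eqref{eq::upper bound on Psi_I}, I would write $\|\Psi(I)\|_p = \|I+\mu\omega\ast f(\Psi(I))\|_p \le \|I\|_p + \mu\|\omega\ast f(\Psi(I))\|_p$. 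Since $f(0)=0$, inequality \eqref{eq::nonlinear map Q 1} applied with $b=0$ gives $\|\omega\ast f(\Psi(I))\|_p\le \|\omega\|_1\|\Psi(I)\|_p = \mu_0^{-1}\|\Psi(I)\|_p$, hence $\|\Psi(I)\|_p\le \|I\|_p + (\mu/\mu_0)\|\Psi(I)\|_p$, and rearranging (using $\mu<\mu_0$) yields $\|\Psi(I)\|_p\le \frac{\mu_0}{\mu_0-\mu}\|I\|_p$.

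For bi-Lipschitz continuity, the Lipschitz bound in one direction is the standard fixed-point-depends-Lipschitz-continuously-on-parameters argument: for $I,J\in L^p$, subtract the two identities $\Psi(I)-\Psi(J) = (I-J) + \mu(\omega\ast f(\Psi(I)) - \omega\ast f(\Psi(J)))$, take norms, and use \eqref{eq::nonlinear map Q 1} to get $\|\Psi(I)-\Psi(J)\|_p\le \|I-J\|_p + (\mu/\mu_0)\|\Psi(I)-\Psi(J)\|_p$, so $\|\Psi(I)-\Psi(J)\|_p\le \frac{\mu_0}{\mu_0-\mu}\|I-J\|_p$. For the lower bound making $\Psi$ bi-Lipschitz, note that the same identity gives $\|I-J\|_p \le \|\Psi(I)-\Psi(J)\|_p + \mu\|\omega\|_1\|\Psi(I)-\Psi(J)\|_p = (1+\mu/\mu_0)\|\Psi(I)-\Psi(J)\|_p$, i.e. $\|\Psi(I)-\Psi(J)\|_p \ge \frac{\mu_0}{\mu_0+\mu}\|I-J\|_p$; equivalently, $\Psi$ is invertible with inverse $J\mapsto J-\mu\omega\ast f(J)$, which is globally Lipschitz by \eqref{eq::nonlinear map Q 1}, and this directly exhibits $\Psi$ as bi-Lipschitz.

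For item (2), I would apply the implicit function theorem to $G:L^p(\R^2)\times L^p(\R^2)\to L^p(\R^2)$, $G(I,u) = u - \mu\omega\ast f(u) - I$, whose zero set is the graph of $\Psi$. The key point is $C^1$-regularity of the superposition (Nemytskii) operator $u\mapsto f(u)$ from $L^p$ to $L^p$: since $f\in C^2(\R)$ with $f(0)=0$ and $f',f''$ bounded, $u\mapsto \omega\ast f(u)$ is $C^1$ with derivative $h\mapsto \omega\ast(f'(u)h)$ — this is where the restriction $p>1$ enters, because Nemytskii operators with a genuinely nonlinear $f$ are typically not Fréchet-differentiable on $L^1$ (one needs $L^{p'}$-boundedness of the remainder with $p'<\infty$), and the hardest step is verifying this differentiability cleanly, controlling the remainder $\|\omega\ast(f(u+h)-f(u)-f'(u)h)\|_p$ via Young's inequality and the Taylor estimate $|f(u+h)-f(u)-f'(u)h|\le \tfrac12\|f''\|_\infty h^2$ together with an $L^p$-continuity-of-translation / dominated convergence argument for the linear term. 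Granting this, $D_u G(I,u) = \mathrm{Id} - \mu\,\omega\ast(f'(u)\,\cdot\,)$ is invertible because $\|\mu\,\omega\ast(f'(u)h)\|_p\le \mu\|\omega\|_1\|f'(u)\|_\infty\|h\|_p\le (\mu/\mu_0)\|h\|_p<\|h\|_p$ (using $f'(0)=\max f' $, so $\|f'(u)\|_\infty = f'(0)=1$), hence it is a Neumann-series-invertible perturbation of the identity; the implicit function theorem then yields $\Psi\in C^1(L^p;L^p)$ with $D\Psi(I) = (D_uG(I,\Psi(I)))^{-1}$.
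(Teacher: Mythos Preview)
Your proof of item (1) is correct and matches the paper's argument line for line: subtract the two fixed-point identities, apply \eqref{eq::nonlinear map Q 1}, and rearrange; the bound \eqref{eq::upper bound on Psi_I} is the special case $J=0$ using $\Psi(0)=0$. Your remark that $\Psi^{-1}(J)=J-\mu\omega\ast f(J)$ is explicit is a clean extra the paper does not spell out.

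For item (2), your high-level strategy---implicit function theorem applied to $G(I,u)=u-\mu\omega\ast f(u)-I$, with $D_uG$ a Neumann-invertible perturbation of the identity---is exactly the paper's (Theorem~\ref{thm::derivative of Psi} via Lemmas~\ref{lem:differential of map Phi} and~\ref{lem:map G}). Two cautions on the hardest step, however. First, a wording issue: the bare Nemytskii map $u\mapsto f(u)$ is \emph{not} Fr\'echet differentiable from $L^p$ to $L^p$ for $p<\infty$ unless $f$ is affine; only the smoothed composite $u\mapsto\omega\ast f(u)$ is, which is what you in fact use. Second, and more substantively, your proposed remainder control has a gap: the Taylor bound $|R(h)|\le\tfrac12\|f''\|_\infty h^2$ combined with Young yields $\|\omega\ast R(h)\|_p\le\|\omega\|_q\|h\|_p^2$ only when $p\ge 2$ (so that $h^2\in L^{p/2}$ with $p/2\ge 1$); for $1<p<2$ this route stalls, and your ``dominated convergence'' hint does not close it. The paper avoids the Fr\'echet remainder entirely: it shows Gateaux differentiability (elementary) and then continuity of the Gateaux derivative $v\mapsto D\Phi_I(v)$ in $\mathscr{L}(L^p)$ via a Chebyshev splitting---given $v_n\to v$ in $L^p$, decompose the convolution over $E_n=\{|v_n-v|>\sqrt\varepsilon\}$ (small measure by Chebyshev; use $|f'|\le 1$ and $\omega\in L^p$) and over its complement (use the Lipschitz bound on $f'$). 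Gateaux-plus-continuous-derivative then gives $C^1$ for all $1<p\le\infty$.
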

	\begin{proof}
		We only provide the proof of item \emph{1.}, for item \emph{2.} see Theorem~\ref{thm::derivative of Psi}.	Let $I_1, I_2\in L^p(\R^2)$. Then using inequality \eqref{eq::nonlinear map Q 1}, we obtain
		$$
		\|\Psi(I_1)-\Psi(I_2)\|_p \le\frac{\mu}{\mu_0}	\|\Psi(I_1)-\Psi(I_2)\|_p+\|I_1-I_2\|_p.
		$$
		It follows that
		$$
		\|\Psi(I_1)-\Psi(I_2)\|_p\le\frac{\mu_0}{\mu_0-\mu}\|I_1-I_2\|_p,
		$$
		provided $\mu<\mu_0$. This implies that $\Psi$ is Lipschitz continuous from $L^p(\R^2)$ to itself. On the other hand, thanks to inequality \eqref{eq::nonlinear map Q 1},
		\begin{equation}
			\begin{split}
				\|\Psi(I_1)-\Psi(I_2)\|_p 
				&\ge\bigg|\|I_1-I_2\|_p-\mu\|\omega\ast \left[f(\Psi(I_1))- f(\Psi(I_2))\right]\|_p\bigg|\\
				&\ge\|I_1-I_2\|_p-\frac{\mu}{\mu_0}\|\Psi(I_1)-\Psi(I_2)\|_p.
			\end{split}
		\end{equation}
		It follows that
		$$
		\|I_1-I_2\|_p\le\left(1+\frac{\mu}{\mu_0}\right)\|\Psi(I_1)-\Psi(I_2)\|_p.
		$$
		This shows that $\Psi$ is bijective and $\Psi^{-1}$ is Lipschitz continuous from $L^p(\R^2)$ to itself.
	\end{proof}

	\section{Controllability issues of Amari-type equation}\label{s::control Amari equation}
	

	
	In the area of mathematical neuroscience, the research by Ermentrout and Cowan \cite{ermentrout1979} is notable for its pioneering insights into the spontaneous emergence of patterns in V1 using neural fields equations of Wilson-Cowan \cite{wilson1973}. Likewise, Nicks \textit{et al.} \cite{nicks2021} provided a comprehensive understanding of how V1 patterns (orthogonally) respond to specific stimuli that are regular in shape and fill all the visual field, particularly near the threshold value  $\mu_c$ via bifurcation theory and multi-scale analysis. However, these studies do not directly address, for instance, the MacKay effect associated with a funnel pattern that contains highly localized information in the center \cite{mackay1957}  or even Billock and Tsou's experiments \cite{billock2007} since the visual stimuli used in these experiences are non-regular in shape or localized in the visual field.
	
	Building on our discussion in Section~\ref{ss::presentation of results}, we claim that these intriguing visual patterns in V1 should manifest before the $\mu$ parameter reaches the threshold $\mu_c$. Given this, we are interpreting the MacKay effect using a controllability framework, specifically in relation to the Amari-type equation \eqref{eq::NF-intro}. In this context, the sensory input is not just passive data; it acts as a control, shaping and reflecting V1's interpretation of the visual stimulus in the experiment.
	
	We consider in this section the following nonlinear Amari-type control system,
	%
	%
	\begin{equation}\label{eq::nonlinear Amari control system}
		\begin{cases}
			\partial_t a(x,t)+a(x,t)-\mu\displaystyle\int_{\R^2}\omega(x-y)f(a(y,t))dy=I(x)&(x,t)\in\R^2\times[0,T],\cr
			\cr
			a(x,0) = a_0(x),&x\in\R^2,
		\end{cases}
	\end{equation}
	where the cortical activity $a$ represents the state of the system, $a_0\in L^p(\R^2)$ is the initial datum, the sensory input $I\in L^p(\R^2)$ is the control that we will use to act on the system state and the time horizon $T>0$. 
	
	\begin{definition}[Exact controllability]\label{def::Linear control notion}
		Let $1\le p\le\infty$.  We say that the nonlinear control system \eqref{eq::nonlinear Amari control system} is exactly controllable in $L^p(\R^2)$ in time $T>0$ if, for any $a_0, a_T\in L^p(\R^2)$, there exists a control function $I\in L^p(\R^2)$ such that the solution of \eqref{eq::nonlinear Amari control system} with $a(\cdot,0)=a_0$ satisfies $a(\cdot,T) = a_T$.
	\end{definition}

	
	
	To comprehend how the exact controllability of the nonlinear control system \eqref{eq::nonlinear Amari control system} could be handled, let us first investigate the exact controllability of the linear model that we write in a more  abstract way (initial value Cauchy problem) as follows,
	\begin{equation}\label{eq::linear control semigroup notation}
		\dot{a}(t) = Aa(t)+I,\qquad a(0) = a_0,\qquad t\in[0, T],
	\end{equation}
	where the operator $A$ is given by
	\begin{equation}\label{eq::operator A}
		Au = -u+\mu \omega\ast u,\qquad\forall u\in L^p(\R^2).
	\end{equation}
	Observe that for any $1\le p\le\infty$, the operator $A$ is a linear bounded operator from $L^p(\R^2)$ to itself.
	
	\begin{proposition}\label{pro::exact controllability of the linear system}
		Let $1\le p\le\infty$. Then, there exists a positive time $\tau_0>0$ such that the control system \eqref{eq::linear control semigroup notation} is exactly controllable in $L^p(\R^2)$ in any time $\tau \in (0,\tau_0)$.
	\end{proposition}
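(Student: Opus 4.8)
The plan is to exploit the fact that $A$ in \eqref{eq::operator A} is a bounded operator on $L^p(\R^2)$, so that the semigroup $e^{tA}$ is the globally defined (uniformly continuous) group $e^{tA}=\sum_{k\ge0}t^kA^k/k!$, and that the control $I$ enters \eqref{eq::linear control semigroup notation} as a \emph{constant in time}. Writing the variation-of-constants formula, the solution at time $\tau$ is
\begin{equation}\label{eq::voc-plan}
	a(\tau) = e^{\tau A}a_0 + \left(\int_0^\tau e^{sA}\,ds\right)I.
\end{equation}
So reaching a prescribed target $a_\tau$ from $a_0$ is equivalent to solving the linear equation $\left(\int_0^\tau e^{sA}ds\right)I = a_\tau - e^{\tau A}a_0$ for $I\in L^p(\R^2)$, and exact controllability in time $\tau$ is precisely the statement that the bounded operator $G_\tau:=\int_0^\tau e^{sA}ds$ is surjective (in fact we will get boundedly invertible) on $L^p(\R^2)$.

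The key step is therefore to show $G_\tau$ is invertible for small $\tau$. First I would observe $G_0=0$ and $G_\tau = \tau\,\mathrm{Id} + R_\tau$ where $R_\tau:=\int_0^\tau(e^{sA}-\mathrm{Id})\,ds$. Since $A$ is bounded, $\|e^{sA}-\mathrm{Id}\|\le e^{s\|A\|}-1\le s\|A\|e^{s\|A\|}$, hence $\|R_\tau\|\le \int_0^\tau s\|A\|e^{s\|A\|}ds \le \tfrac{\tau^2}{2}\|A\|e^{\tau\|A\|}$. Consequently $\tfrac1\tau G_\tau = \mathrm{Id} + \tfrac1\tau R_\tau$ with $\|\tfrac1\tau R_\tau\|\le \tfrac\tau2\|A\|e^{\tau\|A\|}<1$ for all $\tau$ small enough; define $\tau_0$ to be any such threshold (e.g. the value for which $\tfrac{\tau_0}{2}\|A\|e^{\tau_0\|A\|}=1$, or simply $\tau_0:=1/(2\|A\|)$ using the crude bound $e^{\tau\|A\|}\le 2$ on $(0,\tau_0)$). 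By the Neumann series, $\mathrm{Id}+\tfrac1\tau R_\tau$ is invertible on $L^p(\R^2)$, hence so is $G_\tau$, for every $\tau\in(0,\tau_0)$. Then for arbitrary $a_0,a_\tau\in L^p(\R^2)$ the control $I := G_\tau^{-1}\bigl(a_\tau - e^{\tau A}a_0\bigr)\in L^p(\R^2)$ steers the solution of \eqref{eq::linear control semigroup notation} from $a_0$ to $a_\tau$ by \eqref{eq::voc-plan}, which is exactly Definition~\ref{def::Linear control notion}.

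I do not expect a serious obstacle here; the only mild point to be careful about is justifying \eqref{eq::voc-plan} itself (that the mild solution furnished by Theorem~\ref{thm::existence of stationary input}, specialized to the linear case $f=\mathrm{Id}$, coincides with the Duhamel formula for the uniformly continuous group $e^{tA}$), which is standard since $A$ is bounded and $I$ is time-independent, and the commuting of $\int_0^\tau$ with the power series defining $e^{sA}$, which is legitimate by absolute convergence in operator norm. One could alternatively phrase the whole argument spectrally via the Fourier transform when $p=2$ — $\widehat{G_\tau f}(\xi) = \frac{e^{\tau m(\xi)}-1}{m(\xi)}\widehat f(\xi)$ with $m(\xi)=-1+\mu\widehat\omega(\xi)$, and $\frac{e^{\tau m}-1}{m}$ is bounded away from $0$ uniformly in $\xi$ for small $\tau$ since $m$ is bounded — but the Neumann-series argument above has the advantage of covering all $1\le p\le\infty$ at once, which is what the statement requires.
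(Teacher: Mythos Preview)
Your proof is correct and follows essentially the same approach as the paper's: both write the variation-of-constants formula, factor the input-to-state map $\int_0^\tau e^{sA}\,ds$ as $\tau(\idty+O(\tau))$ with $\|O(\tau)\|\to 0$, and invert via a Neumann series for $\tau$ small. Your estimates are slightly more explicit (you track the constant $\tfrac12$ and suggest a concrete $\tau_0$), but the underlying idea is identical.
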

	
	\begin{proof}
		Fix $T>0$ and let $a_0, a_1\in L^p(\R^2)$. Since $e^{tA}$ is a uniformly continuous semigroup of bounded linear operators on  $L^p(\R^2)$ for any $t\ge 0$, one can write 
		\[
		e^{tA} = \idty+O(t),\qquad t\ge 0,
		\]
		where $O(t)$ is a linear and bounded operator of $L^p(\R^2)$ satisfying $\|O(t)\|\le t\|A\|e^{t\|A\|}$, see for instance, \cite[Theorem~1.2.]{pazy2012}.
		Therefore, for any $\tau\in (0, T]$ and $a_\tau\in L^p(\R^2)$, the solution of \eqref{eq::linear control semigroup notation} at time $\tau$ satisfies
		\begin{equation}
			a(\tau) = e^{\tau A}a_0+\int_{0}^{\tau}e^{(\tau-s)A}Ids = e^{\tau A}a_0+\tau(\idty+O(\tau))I.
		\end{equation}
		Letting $\tau$ small enough, $\idty+O(\tau)$ is invertible in $\mathscr{L}(L^p(\R^2))$ (the vector space of linear and bounded operators from $L^p(\R^2)$ into itself), and $I = \tau^{-1}(\idty+O(\tau))^{-1}(a_1-e^{\tau A}a_0)\in L^p(\R^2)$ defines a control function that steers the solution of  \eqref{eq::linear control semigroup notation} from $a_0$ to $a_1$ in time $\tau$. 
	\end{proof}
	\begin{remark}
		Note that in the linear case, when $\mu<\mu_0$, the control function $I\in L^p(\R^2)$ that steers the solution from the initial state $a_0\in L^p(\R^2)$ to the target state $a_1\in L^p(\R^2)$ in any time $T>0$ is given by
		\begin{equation}
			I = \left(\idty-e^{TA}\right)^{-1}A(a_1-e^{TA}a_0).
		\end{equation}
		Indeed, one can prove that the linear operator $A\in\cL(L^p(\R^2))$ is dissipative when $\mu<\mu_0$ and therefore that  
		\begin{equation}
			\|e^{tA}\|_{\cL(L^p(\R^2))}<1,\qquad \forall t>0.
		\end{equation}
	\end{remark}
	
	Let us now discuss the exact controllability in $L^p(\R^2) $ of the nonlinear system \eqref{eq::nonlinear Amari control system} that we write in abstract way as
	\begin{equation}\label{eq::nonlinear control semigroup notation}
		\dot{a}(t) = N(a(t))+I,\qquad a(0) = a_0,\qquad t\in[0, T],
	\end{equation}
	where for any $1\le p\le\infty$, the nonlinear operator $N$ is defined by
	\begin{equation}\label{eq::operator N}
		N(u) = -u+\mu\omega\ast f(u),\qquad u\in L^p(\R^2).
	\end{equation}
	Then \eqref{eq::nonlinear control semigroup notation} defines an ordinary differential equation in $L^p(\R^2) $ associated with $N$. Recall from Lemma~\ref{Lem::nonlinear operator Q} that for every $1\le p\le\infty$, the nonlinear operator $N$ is (globally) Lipschitz continuous from $L^p(\R^2) $ to itself. Therefore, one can define a  nonlinear\footnote{Please, refer, for instance, to \cite[p. 1]{oharu1966note} or \cite[Section~3]{kato1967nonlinear} for the definition of a nonlinear semigroup of operators.}  semigroup of operators $\{U(t,\cdot)\}_{t\ge 0}$ in $L^p(\R^2)$ such that for any $a_0\in L^p(\R^2)$, $a(t):=U(t, a_0)$ is the unique solution to \eqref{eq::nonlinear control semigroup notation} when $I\equiv0$, namely
	\begin{equation}\label{eq::nonlinear semigroup}
		\displaystyle\frac{\partial U}{\partial t}(t,a_0) = N(U(t,a_0)),\qquad
		U(0, a_0) = a_0.
	\end{equation}
	\begin{theorem}\label{thm::exact controllability of the nonlinear system}
	Let $1< p\le\infty$. Then, there exists a positive time $\tau_0>0$ such that the control system \eqref{eq::nonlinear control semigroup notation} is exactly controllable in $L^p(\R^2)$ in any time $\tau \in (0,\tau_0)$.
	\end{theorem}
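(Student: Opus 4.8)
The plan is to follow the scheme of Proposition~\ref{pro::exact controllability of the linear system}, replacing the explicit inversion of $\idty+O(\tau)$ by a contraction mapping argument, which is what is needed once the dynamics is nonlinear. Fix $\tau>0$ and $a_0,a_\tau\in L^p(\R^2)$. For a control $I\in L^p(\R^2)$ let $a_I\in X_p$ be the unique solution of \eqref{eq::nonlinear Amari control system} with $a_I(\cdot,0)=a_0$, which exists by Theorem~\ref{thm::existence of stationary input}. Since $I$ enters \eqref{eq::nonlinear Amari control system} additively, the variations-of-constants formula for \eqref{eq::nonlinear Amari control system} gives
\begin{equation*}
a_I(\cdot,\tau)=e^{-\tau}a_0+\mu\int_0^\tau e^{-(\tau-s)}\,\omega\ast f(a_I(\cdot,s))\,ds+(1-e^{-\tau})\,I.
\end{equation*}
As $\tau>0$, the scalar $1-e^{-\tau}$ is positive, so $a_I(\cdot,\tau)=a_\tau$ holds if and only if $I$ is a fixed point of the map $\mathcal{T}\colon L^p(\R^2)\to L^p(\R^2)$ defined by
\begin{equation*}
\mathcal{T}(I):=\frac{1}{1-e^{-\tau}}\left[\,a_\tau-e^{-\tau}a_0-\mu\int_0^\tau e^{-(\tau-s)}\,\omega\ast f(a_I(\cdot,s))\,ds\,\right].
\end{equation*}
That $\mathcal{T}$ maps $L^p(\R^2)$ into itself follows from $a_I\in X_p$, from $f(0)=0$ together with $f$ being $1$-Lipschitz (so that $f(a_I(\cdot,s))\in L^p(\R^2)$), and from the generalized Young inequality $\|\omega\ast g\|_p\le\|\omega\|_1\|g\|_p$ recalled in Lemma~\ref{Lem::nonlinear operator Q}.

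The core of the argument is to show that $\mathcal{T}$ is a contraction for $\tau$ small, and the key ingredient is a continuous-dependence estimate with respect to the control. Given $I_1,I_2\in L^p(\R^2)$, the function $z:=a_{I_1}-a_{I_2}$ solves $\partial_t z=-z+\mu\,\omega\ast(f(a_{I_1})-f(a_{I_2}))+(I_1-I_2)$ with $z(\cdot,0)=0$; taking $L^p(\R^2)$-norms in its variations-of-constants formula and using $\|\omega\ast(f(a_{I_1}(\cdot,r))-f(a_{I_2}(\cdot,r)))\|_p\le\|\omega\|_1\|z(\cdot,r)\|_p$ (generalized Young plus $f$ being $1$-Lipschitz), one gets
\begin{equation*}
\|z(\cdot,s)\|_p\le\int_0^s\bigl(\mu\|\omega\|_1\|z(\cdot,r)\|_p+\|I_1-I_2\|_p\bigr)\,dr,\qquad s\in[0,\tau],
\end{equation*}
whence Gronwall's lemma yields $\|z(\cdot,s)\|_p\le s\,e^{\mu\|\omega\|_1 s}\|I_1-I_2\|_p$ on $[0,\tau]$. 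Substituting this bound into the definition of $\mathcal{T}$ and using once more the generalized Young inequality together with the Lipschitz property of $f$,
\begin{equation*}
\|\mathcal{T}(I_1)-\mathcal{T}(I_2)\|_p\le\frac{\mu\|\omega\|_1}{1-e^{-\tau}}\left(\int_0^\tau s\,e^{\mu\|\omega\|_1 s}\,ds\right)\|I_1-I_2\|_p=:L(\tau)\,\|I_1-I_2\|_p.
\end{equation*}
Since $1-e^{-\tau}\sim\tau$ and $\int_0^\tau s\,e^{\mu\|\omega\|_1 s}\,ds=O(\tau^2)$ as $\tau\to0^+$, we have $L(\tau)\to0$; fix $\tau_0>0$ with $L(\tau)<1$ for every $\tau\in(0,\tau_0)$. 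For such $\tau$, the map $\mathcal{T}$ is a contraction on the Banach space $L^p(\R^2)$, hence it has a unique fixed point $I^\star\in L^p(\R^2)$, and by construction the solution of \eqref{eq::nonlinear Amari control system} with initial datum $a_0$ and control $I^\star$ satisfies $a_{I^\star}(\cdot,\tau)=a_\tau$. As $a_0,a_\tau$ were arbitrary and $\tau_0$ depends only on $\mu$, $\|\omega\|_1$ and the Lipschitz constant of $f$, this yields exact controllability of \eqref{eq::nonlinear control semigroup notation} in $L^p(\R^2)$ in any time $\tau\in(0,\tau_0)$.

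The main obstacle is precisely the continuous-dependence estimate: what makes the contraction close is not merely that $I\mapsto a_I$ is globally Lipschitz (which Theorem~\ref{thm::existence of stationary input} already provides), but that $\|a_{I_1}(\cdot,s)-a_{I_2}(\cdot,s)\|_p$ is controlled by $\|I_1-I_2\|_p$ times a factor that vanishes as $s\to0^+$ (here the factor $s$); this extra smallness, combined with the $1/(1-e^{-\tau})\sim1/\tau$ prefactor in $\mathcal{T}$, is what drives the Lipschitz constant of $\mathcal{T}$ below $1$ for short horizons, and it relies crucially on the fact that the control enters \eqref{eq::nonlinear Amari control system} additively rather than through the nonlinearity. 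One should also check that the Gronwall constant $e^{\mu\|\omega\|_1 s}$ stays bounded on $[0,\tau_0]$, which is immediate, and that all the estimates above remain valid for $p=\infty$, where they amount to elementary bounds for a linear ODE in $L^\infty(\R^2)$ with bounded forcing.
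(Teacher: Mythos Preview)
Your proof is correct and takes a genuinely different route from the paper. The paper represents the solution as $a(t)=U(t,g(t))$ via the nonlinear flow $U$ of \eqref{eq::nonlinear semigroup}, differentiates $U$ with respect to the initial datum, uses Lemma~\ref{lem::differentiel-U} to show that $D_{a_0}U(t,v)$ and its inverse are $\idty+O(t)$ in $\mathscr{L}(L^p(\R^2))$, and then solves $a_\tau=U(\tau,a_0)+\tau(\idty+O(\tau))I$ explicitly for $I$. This requires $N\in C^1(L^p(\R^2);L^p(\R^2))$, i.e.\ Lemma~\ref{lem:differential of map Phi}, which is precisely why the paper restricts to $1<p\le\infty$ (see the remark following the theorem). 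Your argument, by contrast, works directly with the variations-of-constants formula and a Gronwall/contraction estimate that uses only the global Lipschitz property of $f$ and Young's inequality; no differentiability of the Nemytskii-type map is invoked. As a result your proof is more elementary and in fact valid for $p=1$ as well, whereas the paper's flow-differential approach genuinely fails there. The trade-off is that the paper's argument yields a more explicit, structural description of the steering control in terms of $D_{a_0}U$, while yours produces the control only as an abstract fixed point.
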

	\begin{proof}
		First of all, by Lemma~\ref{lem:differential of map Phi} one has $N\in C^1(L^p(\R^2); L^p(\R^2))$ and the Fréchet differential $DN(u)\in\mathscr{L}(L^p(\R^2))$ is uniformly bounded for every $u\in L^p(\R^2)$. Let $T>0$ and $\tau\in(0,T]$.
		
			By Lemma~\ref{lem::differentiel-U}, the differential of $U$ with respect to $a_0$ is a well-defined invertible operator $D_{a_0}(t,v)\in \mathscr{L}(L^p(\R^2))$ for any $v\in L^p(\R^2)$ and every $0<t\le T$, and it satisfies
			\begin{equation}\label{eq::norm right expo chrono}
				\|D_{a_0}U(t, v)-\idty\|_{\mathscr{L}(L^p(\R^2))}\le t\left(1+\frac{\mu}{\mu_0}\right)e^{\left(1+\frac{\mu}{\mu_0}\right)t},\qquad 0\le t\le T.    
			\end{equation}
			On the other hand, one observes that for any $a_0\in L^p(\R^2)$ and every $I\in L^p(\R^2)$, the solution $a\in C([0, \tau]; L^p(\R^2))$ of \eqref{eq::nonlinear control semigroup notation} can be represented as
			\begin{equation}
				a(t) = U(t,g(t)),
				\qquad\text{where}\quad
				g(t)=a_0+\int_0^t \cT(s)Ids, 
			\end{equation}
			and we let $\cT(s) :=\left[D_{a_0}U(s, g(s))\right]^{-1}$. Again, by Lemma~\ref{lem::differentiel-U}, we have
			\begin{equation}\label{eq::norm left expo chrono}
				\|\cT(s)-\idty\|_{\mathscr{L}(L^p(\R^2))}\le s\left(1+\frac{\mu}{\mu_0}\right)e^{\left(1+\frac{\mu}{\mu_0}\right)s},\qquad 0\le s\le t.
			\end{equation}

		Letting now $a_\tau\in L^p(\R^2)$, one can compute owing to \eqref{eq::norm right expo chrono} and \eqref{eq::norm left expo chrono},
		\begin{eqnarray}
			a_\tau:=a(\tau) = U(\tau, g(\tau))&=&U(\tau, a_0)+\int_0^1 \frac{\partial}{\partial\eta}U\left(\tau, a_0+\eta\int_0^\tau \cT(s)Ids\right)d\eta\nonumber\\
			&=&U(\tau, a_0)+\left\{\int_0^1 \left[D_{a_0}U\left(\tau, a_0+\eta\int_0^\tau \cT(s)Ids\right)\right]\left(\int_0^\tau \cT(s)Ids\right)d\eta\right\}\nonumber\\
			&=&U(\tau, a_0)+\left\{\int_0^1(\idty+O(\tau))\int_0^\tau (\idty+O(s))dsd\eta\right\}I\nonumber\\
			&=&U(\tau, a_0)+\tau(\idty+O(\tau))I.
		\end{eqnarray}
		Then, letting $\tau\in(0, T]$ small enough, one finds that $\idty+O(\tau)$ is invertible in $\mathscr{L}(L^p(\R^2))$ and $I = \tau^{-1}(\idty+O(\tau))^{-1}(a_\tau-U(\tau, a_0))\in L^p(\R^2)$ defines a control function that steers the solution of  \eqref{eq::nonlinear control semigroup notation} from $a_0$ to $a_\tau$ in time $\tau$. 
	\end{proof}
	
	\begin{remark}
	 It is immediate to see that for large time $T>0$, the control $I_1$ defined by $I_1 = 0$ on $[0, T-\tau]$ and $I_1 = \tau^{-1}(\idty+O(\tau))^{-1}(a_1-e^{\tau A}a(T-\tau))$  on $(T-\tau, T]$ (resp. $I_2 = 0$ on $[0, T-\tau]$ and $I_2 = \tau^{-1}(\idty+O(\tau))^{-1}(a_1-U(T,a(T-\tau)))$  on $(T-\tau, T]$)  is a piece-wise constant function in $L^p(\R^2)$ that steers the solution of \eqref{eq::linear control semigroup notation} (resp.  \eqref{eq::nonlinear control semigroup notation}) from $a_0$ to $a_1$ in time $T$.
	\end{remark}
	
	\begin{remark}
Related to Theorem~\ref{thm::exact controllability of the nonlinear system} in the case of $p=1$, the differential $D_{a_0}U(t,v)$ does not belong to $\mathscr{L}(L^1(\R^2))$ since the Gateaux-differential of the nonlinear operator $N$ is not continuous from $L^1(\R^2)$ into $\mathscr{L}(L^1(\R^2))$.

We also stress that exact controllability results proved in Proposition~\ref{pro::exact controllability of the linear system} and Theorem~\ref{thm::exact controllability of the nonlinear system} are quite general regarding assumptions on the connectivity kernel $\omega$ and the nonlinear response function $f$. For the validity of Proposition~\ref{pro::exact controllability of the linear system}, it suffices that $\omega\in L^1(\R^2)$ to guarantee the existence of a unique solution to Equation~\eqref{eq::operator A}. While for the validity of Theorem~\ref{thm::exact controllability of the nonlinear system}, it suffices that $\omega\in L^1(\R^2)\cap L^p(\R^2)$ and $f\in C^2(\R)$ is bounded with its first and second derivatives, as it is sufficient for the validity of Lemma~\eqref{lem:differential of map Phi} (see, for instance, inequalities~\eqref{eq::validity estimate infty} and \eqref{eq::validity estimate}).
	\end{remark}
	
	
\section{On the visual MacKay effect modelling}\label{s::MacKay mathematical interpretation}
As mentioned in the introduction, we stress that the physical visual stimuli employed in MacKay's experiments consist of funnel and tunnel patterns with highly localized information. Taking into account Equation \eqref{eq::funnel and tunnel patterns} and the retino-cortical map, we incorporate these patterns as sensory inputs in Equation \eqref{eq::NF-intro}, such that $I \in \{P_F, P_T\} + \varepsilon v$, where $\varepsilon > 0$ and $v$ represents a localized function in the cortical domain intended to model the highly localized information present in the funnel and tunnel patterns. In this context, the function $v$ can also be regarded as a localized distributed control, aiming to disrupt the \textit{global} plane Euclidean symmetry of the funnel or tunnel pattern.

In Section~\ref{ss::a priori analysis}, assuming that the response function $f$ is linear, we provide a more general result showing that spontaneous cortical patterns cannot induce illusory contours in the output pattern. In particular, we deduce that $I\in \{P_F, P_T\}$  cannot induce the MacKay effect using Equation~\eqref{eq::NF-intro}. Then, using MacKay's stimuli $I\in\{P_F, P_T\}+\varepsilon v$, we prove in Section~\ref{ss::theoretical MacKay effect} that the linearized form of \eqref{eq::NF-intro} is sufficient to replicate the visual MacKay effect theoretically. Thus, the phenomenon starts in the linear regime, so the effect of saturating $f$ should only dampen out high oscillations in the system. Section~\ref{ss::theoretical MacKay effect nonlinear} provides theoretical proof of all these results when the response function $f$ is a nonlinear sigmoid function.

\subsection{A priori analysis}\label{ss::a priori analysis}

In this section, we prove that it is necessary to break the Euclidean symmetry of funnel and tunnel pattern by localized control function for modelling the visual MacKay effect with Equation~\eqref{eq::NF-intro}, both with a linear and nonlinear response function. Our first result is the following.
\begin{theorem}\label{thm::stationary input in linear regime}
	Let $a_0\in L^\infty(\R^2)$ and $I\in L^\infty(\R^2)$ given by $I(\cdot) = \cos(2\pi\langle \xi_0,\cdot\rangle)$, for some $\xi_0\in\R^2$. Assume that the response function $f$ is linear. If $\mu<\mu_0$, it holds
	\begin{equation}
		a(\cdot,t)\xrightarrow[t\to\infty]{} \frac{I(\cdot)}{1-\mu\widehat{\omega}(\xi_0)},\qquad\mbox{exponentially in}\qquad L^\infty(\R^2),
	\end{equation}
	where $a\in X_\infty$ is the solution of \eqref{eq::NF-intro} with initial datum $a_0$.
\end{theorem}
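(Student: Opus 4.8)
The plan is to exhibit the claimed limit as the (necessarily unique) stationary state of \eqref{eq::NF-intro} for this particular $I$, and then invoke the exponential stabilization already established in Theorem~\ref{thm::existence of stationary input}. Since $f$ is linear with $f(0)=0$ and $f'(0)=1$, one has $f(s)=s$, so \eqref{eq::NF-intro} reads $\partial_t a = -a+\mu\,\omega\ast a + I$. The key observation is that $\cos(2\pi\langle\xi_0,\cdot\rangle)$ is an eigenfunction of the convolution operator $u\mapsto\omega\ast u$: writing $\cos(2\pi\langle\xi_0,x\rangle)=\operatorname{Re}\big(e^{2\pi i\langle\xi_0,x\rangle}\big)$ and using that $\omega\in\cS(\R^2)$ is real-valued and even, a direct computation gives $(\omega\ast\cos(2\pi\langle\xi_0,\cdot\rangle))(x)=\widehat{\omega}(\xi_0)\cos(2\pi\langle\xi_0,x\rangle)$, with $\widehat{\omega}(\xi_0)\in\R$.

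Next, set $a_I(x):=\dfrac{\cos(2\pi\langle\xi_0,x\rangle)}{1-\mu\widehat{\omega}(\xi_0)}$. This is well-defined and belongs to $L^\infty(\R^2)$: since $\mu<\mu_0=\|\omega\|_1^{-1}$ by \eqref{eq::parameter mu_0} and $|\widehat{\omega}(\xi_0)|\le\|\omega\|_1$, we have $|\mu\widehat{\omega}(\xi_0)|<1$, so the denominator is nonzero (in fact positive). Using the eigenfunction identity above, $a_I-\mu\,\omega\ast a_I=\dfrac{1-\mu\widehat{\omega}(\xi_0)}{1-\mu\widehat{\omega}(\xi_0)}\cos(2\pi\langle\xi_0,\cdot\rangle)=I$, that is, $a_I$ satisfies \eqref{eq::SS}, and hence is a stationary state of \eqref{eq::NF-intro} associated with the input $I$.

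Finally, since $\mu<\mu_0$ and $I,a_0\in L^\infty(\R^2)$, Theorem~\ref{thm::existence of stationary input}(i) applied with $p=\infty$ guarantees that $a_I$ is \emph{the} stationary state and that $\|a(\cdot,t)-a_I(\cdot)\|_\infty\le e^{-(1-\mu\|\omega\|_1)t}\|a_0-a_I\|_\infty$ for every $t\ge 0$; because $1-\mu\|\omega\|_1>0$, the right-hand side tends to $0$ exponentially, which is exactly the assertion. (Alternatively, one may argue directly: $b:=a-a_I$ solves $\partial_t b=Ab$ with $A$ as in \eqref{eq::operator A}, so $b(\cdot,t)=e^{tA}b(\cdot,0)$, and the series expansion of $e^{tA}$ yields $\|e^{tA}\|_{\mathscr{L}(L^\infty(\R^2))}\le e^{-(1-\mu\|\omega\|_1)t}$.) There is no genuine obstacle here: the only points requiring care are the eigenfunction computation and the verification that $1-\mu\widehat{\omega}(\xi_0)\neq0$, both of which are immediate from $\omega$ being a real, even Schwartz function and from the standing assumption $\mu<\mu_0$.
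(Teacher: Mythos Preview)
Your proposal is correct and follows essentially the same approach as the paper: both identify $a_I=I/(1-\mu\widehat{\omega}(\xi_0))$ as a stationary state via the eigenfunction relation $\omega\ast I=\widehat{\omega}(\xi_0)I$, and then invoke Theorem~\ref{thm::existence of stationary input}(i) with $p=\infty$ for uniqueness and exponential convergence. You are in fact slightly more careful than the paper, since you explicitly justify why $1-\mu\widehat{\omega}(\xi_0)\neq 0$ (the paper leaves this implicit).
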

\begin{proof} The stationary state associated with $I(\cdot)=\cos(2\pi\langle \xi_0,\cdot\rangle)$ is given by $a_I(\cdot)=I(\cdot)/(1-\mu\widehat{\omega}(\xi_0))$. Indeed, one has for $x\in\R^2$,
	\begin{equation}
		I(x)+\mu(\omega\ast a_I)(x)=I(x)+\frac{\mu}{1-\mu\widehat{\omega}(\xi_0)}(\omega\ast I)(x)=\frac{I(x)}{1-\mu\widehat{\omega}(\xi_0)} = a_I(x),
	\end{equation}
	since $\omega\ast I = \widehat{\omega}(\xi_0)I$. Therefore, if $\mu<\mu_0$, the result follows by the uniqueness of stationary state and exponential convergence of $a$ to $a_I$ in the space $L^\infty(\R^2)$ provided by Theorem~\ref{thm::existence of stationary input}.
\end{proof}
\begin{corollary}\label{cor::no MacKay effect in the linear regime}
	Assume that the response function $f$ is linear. If $\mu<\mu_0$, then $a_F$ (resp. $a_P$) is a funnel (resp. tunnel) pattern in shape as $P_F$ (resp. $P_T$). In particular, Equation~\eqref{eq::NF-intro} with a linear response function cannot reproduce the MacKay effect starting with a sensory input equal to $P_F$ or $P_T$.
\end{corollary}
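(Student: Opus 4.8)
The plan is to read off the two stationary states directly from Theorem~\ref{thm::stationary input in linear regime} and then observe that, up to a strictly positive multiplicative constant, each one coincides with the corresponding input pattern, so that the associated binary patterns are unchanged.

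First I would note that the funnel and tunnel patterns of \eqref{eq::funnel and tunnel patterns} are particular plane waves $I(\cdot)=\cos(2\pi\langle\xi_0,\cdot\rangle)$ covered by Theorem~\ref{thm::stationary input in linear regime}: $P_F$ corresponds to $\xi_0=(0,\lambda)$ and $P_T$ to $\xi_0=(\lambda,0)$. Since $f$ is linear and $\mu<\mu_0$, that theorem yields that the solution of \eqref{eq::NF-intro} issued from any $a_0\in L^\infty(\R^2)$ converges exponentially in $L^\infty(\R^2)$ to the unique stationary state
\[
a_F=\frac{P_F}{1-\mu\widehat{\omega}(\xi_0)}\quad(\xi_0=(0,\lambda)),\qquad\text{resp.}\qquad a_T=\frac{P_T}{1-\mu\widehat{\omega}(\xi_0)}\quad(\xi_0=(\lambda,0)),
\]
and by radial symmetry of $\omega$ the two scaling constants are in fact equal. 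The key point is then that these constants are strictly positive: for every $\xi\in\R^2$ one has $|\widehat{\omega}(\xi)|\le\|\omega\|_1$, hence $|\mu\widehat{\omega}(\xi_0)|\le\mu\|\omega\|_1<1$ since $\mu<\mu_0=\|\omega\|_1^{-1}$, so that $1-\mu\widehat{\omega}(\xi_0)\in(0,2)$.

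Consequently $a_F$ is a strictly positive scalar multiple of $P_F$ (and similarly for $a_T$ and $P_T$), so $a_F$ and $P_F$ have the same sign at every point of $\R^2$; in particular they share the same zero level-set and therefore the same binary pattern, $B_{a_F}=B_{P_F}$, in the sense of Section~\ref{ss:: Binary pattern}. Thus the output pattern is, as a binary image, exactly a funnel (resp. tunnel) pattern of the same shape as $P_F$ (resp. $P_T$). To conclude the statement about the MacKay effect, I would simply observe that reproducing it requires the zero level-set of the output to display, superimposed onto the fan shapes of $P_F$, an additional family of contours orthogonal to them (a tunnel component), and symmetrically for the input $P_T$; since $B_{a_F}=B_{P_F}$ carries no such extra contours, Equation~\eqref{eq::NF-intro} with a linear response function and sensory input $P_F$ or $P_T$ cannot produce the MacKay effect. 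The argument is short once Theorem~\ref{thm::stationary input in linear regime} is available; the only point requiring care is the positivity of $1-\mu\widehat{\omega}(\xi_0)$ — which is precisely what prevents a color inversion of the binary pattern — together with the qualitative formalization of ``MacKay effect'' as the emergence, in the output's binary pattern, of contours absent from that of the input.
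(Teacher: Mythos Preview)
Your proof is correct and follows the same approach as the paper's: apply Theorem~\ref{thm::stationary input in linear regime} to obtain that $a_F$ (resp.\ $a_T$) is a scalar multiple of $P_F$ (resp.\ $P_T$), and conclude that the binary patterns coincide. You are in fact slightly more careful than the paper in explicitly checking that $1-\mu\widehat{\omega}(\xi_0)>0$, which rules out a color inversion of the binary image.
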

\begin{proof}  Due to Theorem~\ref{thm::stationary input in linear regime}, the stationary states associated with $P_F$ and $P_T$ are respectively proportional to $P_F$ and $P_T$ so that they have the same binary pattern respectively (see, Section~\ref{ss:: Binary pattern}), and then the same geometrical shape in terms of images.
\end{proof}

We provide a similar result as that of Theorem~\ref{thm::stationary input in linear regime} with the nonlinear function $f$ in Equation~\eqref{eq::SS}. This result shows, in particular, that even in the presence of the nonlinearity, Equation~\eqref{eq::NF-intro} cannot describe the MacKay effect when the sensory input is chosen equal to $P_F$ or $P_T$. Given that $P_F$ and $P_T$ have symmetrical roles, we focus only on $P_F$. We recall that they are analytically given in Cartesian cortical coordinates in V1 by \eqref{eq::funnel and tunnel patterns}.
\begin{theorem}\label{thm::characterization of properties on a_F}
	Assume that the sensory input in Equation~\eqref{eq::SS}
	is taken as $I = P_F\in L^\infty(\R^2)$. If $\mu<\mu_0$, then the stationary state $a_F:=\Psi(P_F)\in L^\infty(\R^2)$ associated with $P_F$ explicitly depends solely upon $x_2$. Moreover, one has the following.
	\begin{enumerate}
		\item The function $a_F$ is even and $1/\lambda$-periodic with respect to $x_2$;
		\item The function $a_F$ is infinitely differentiable, and Lipshitz continuous;
		\item If in addition $\mu<\mu_0/2$ and the function $f$ is odd, then $a_F$ has a discrete and countable number of zeroes with respect to $x_2$, identical with that of $x_2\mapsto\cos(2\pi\lambda x_2)$ on $\R$.
	\end{enumerate}
\end{theorem}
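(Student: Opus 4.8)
The starting point is the fixed-point characterization $a_F = \mu\,\omega\ast f(a_F) + P_F$ from \eqref{eq::SS}, valid and uniquely solvable since $\mu<\mu_0$ (Theorem~\ref{thm::existence of stationary input}). The first claim — that $a_F$ depends only on $x_2$, is even and $1/\lambda$-periodic in $x_2$ — follows from a symmetry/uniqueness argument. Since $P_F(x)=\cos(2\pi\lambda x_2)$ is invariant under translations in $x_1$, under $x_1\mapsto -x_1$, under $x_2\mapsto -x_2$, and under $x_2\mapsto x_2+1/\lambda$, and since $\omega$ is radial (hence equivariant under all these isometries of $\R^2$), the map $I\mapsto\Psi(I)$ commutes with each of these transformations. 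By uniqueness of the stationary state, $a_F$ must itself be invariant under all of them; invariance under $x_1$-translations forces $a_F(x)=a_F(x_2)$, and the remaining invariances give evenness and $1/\lambda$-periodicity. I would write $a_F = g$ for a function $g:\R\to\R$ and record the scalar fixed-point equation $g(x_2) = \mu\int_\R \bar\omega(x_2-y_2) f(g(y_2))\,dy_2 + \cos(2\pi\lambda x_2)$, where $\bar\omega(s) = \int_\R \omega(\sqrt{s^2+t^2})\,dt$ is the marginal of the DoG kernel (itself a Schwartz function of one variable).

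For item (2), regularity is a bootstrap on this scalar convolution equation: $g\in L^\infty$, $f$ is $C^2$ with bounded derivatives so $f(g)\in L^\infty$, and $\bar\omega\in\cS(\R)$, so $\bar\omega\ast f(g)\in C^\infty$ with all derivatives bounded; hence $g = \mu\,\bar\omega\ast f(g) + \cos(2\pi\lambda\,\cdot)\in C^\infty$, and differentiating under the integral (the $L^1$ derivatives of $\bar\omega$ control everything) shows $g'$ is bounded, giving Lipschitz continuity. This step is routine.

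The substantive part is item (3): under the extra hypotheses $\mu<\mu_0/2$ and $f$ odd, the zero set of $g$ coincides with that of $x_2\mapsto\cos(2\pi\lambda x_2)$, i.e. $g$ vanishes exactly at the points $x_2 = (2k+1)/(4\lambda)$, $k\in\Z$. First, oddness of $f$ combined with the symmetry of $\cos$ under $x_2\mapsto x_2 + 1/(2\lambda)$ (which flips its sign) gives, again by uniqueness, the anti-symmetry $g(x_2 + 1/(2\lambda)) = -g(x_2)$; evaluating at the half-period points forces $g((2k+1)/(4\lambda)) = 0$, so those zeros are always present. The remaining, harder direction is that there are \emph{no other} zeros. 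The plan is a perturbative estimate: write $g = g_0 + h$ where $g_0(x_2) = \cos(2\pi\lambda x_2)/(1-\mu\hat\omega(\lambda e_2))$ is the stationary state of the \emph{linearized} equation (Theorem~\ref{thm::stationary input in linear regime}, with $\xi_0 = \lambda e_2$), and control $\|h\|_\infty$. From $g = \mu\bar\omega\ast f(g) + \cos(\cdots)$ and $g_0 = \mu\bar\omega\ast(f'(0)g_0) + \cos(\cdots)$, subtracting and using $f'(0)=1$, $\|f(s)-s\|$ controlled quadratically (Taylor, $f(0)=f'(0)s$ error $\le \tfrac12\|f''\|_\infty s^2$), together with $\mu\|\omega\|_1 = \mu/\mu_0 < 1/2$, one gets a bound $\|h\|_\infty \le C\mu\|g\|_\infty^2/(1-\mu/\mu_0)$ with $\|g\|_\infty$ itself bounded via \eqref{eq::upper bound on Psi_I}; the factor-of-two room in $\mu<\mu_0/2$ is what makes this closable. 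Since $g_0$ is a genuine cosine with slope bounded away from zero near its zeros, if $\|h\|_\infty$ (and correspondingly $\|h'\|_\infty$, estimated the same way using $\bar\omega'\in L^1$) is small enough relative to that slope, Rouché-type or direct sign-change reasoning on each period interval shows $g$ has exactly one zero per half-period, namely the forced one — no extra zeros, and no loss of the forced ones. I expect this last quantitative step — showing the perturbation $h$ is genuinely small enough, in $C^1$, that it cannot create or destroy zeros of the cosine profile — to be the main obstacle, and the role of the hypothesis $\mu<\mu_0/2$ (rather than merely $\mu<\mu_0$) is precisely to give the slack needed there.
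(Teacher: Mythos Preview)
Your treatment of items (1) and (2), and your forward inclusion in item (3) via the half-period anti-symmetry $g(x_2+1/(2\lambda))=-g(x_2)$, are correct and essentially match (or slightly streamline) what the paper does.

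The gap is in the reverse inclusion of item (3). Your plan is to write $g=g_0+h$ and show $\|h\|_\infty$ (and $\|h'\|_\infty$) small enough that $h$ cannot create extra zeros of the cosine profile $g_0$. But the bound you obtain,
\[
\|h\|_\infty \;\le\; \frac{\mu/\mu_0}{1-\mu/\mu_0}\,\|f(g)-g\|_\infty,
\]
is not small under the stated hypotheses. The only constraints on $f$ are $\|f\|_\infty=1$, $f'(0)=1$, $f$ odd, $f''$ bounded; nothing bounds $\|f''\|_\infty$, so your Taylor estimate $|f(s)-s|\le\tfrac12\|f''\|_\infty s^2$ gives no uniform control. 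Using instead $|f(s)-s|\le |s|+1$ on the range $|s|\le\|g\|_\infty\le 2$ only gives $\|h\|_\infty$ of order one, while the amplitude $1/(1-\mu\widehat\omega(\xi_0))$ of $g_0$ can itself be of order one (or smaller, since $\widehat\omega(\xi_0)$ may be close to zero or negative). So the perturbation can be comparable to $g_0$, and a sign-change/Rouché argument cannot rule out extra zeros. The hypothesis $\mu<\mu_0/2$ does not, by itself, make $h$ small.

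The paper's argument for this direction is not perturbative at all. Given a zero $x^*$ of $a_F$, it rewrites the equation for $y\mapsto a_F(x^*-y)$ as a fixed-point problem with the \emph{modified} kernel
\[
k(y,z)=\omega(y-z)-\cos(2\pi y_2)\,\omega(z),
\]
for which $\sup_y\int|k(y,z)|\,dz\le 2\|\omega\|_1$. The condition $\mu<\mu_0/2$ is exactly what makes this new integral operator a strict contraction, hence its fixed point is unique. One then checks (using $\omega$ even and $f$ odd) that $y\mapsto -a_F(x^*+y)$ satisfies the same equation, so by uniqueness $a_F(x^*-\cdot)$ is odd, the integral $\int\omega(y)f(a_F(x^*-y))\,dy$ vanishes, and \eqref{eq::useful-1} forces $\cos(2\pi x_2^*)=0$. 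This is a structural symmetry-plus-uniqueness argument, and the factor $2$ in $\mu_0/2$ comes from the two terms in $k$, not from any smallness of a remainder.
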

\begin{remark}\label{rmk::gap between zeroes}
	Notice the assumption $\mu<\mu_0/2$ in item~\emph{3.} of Theorem~\ref{thm::characterization of properties on a_F} instead of $\mu<\mu_0$. We think this is a technical assumption because of the strategy used in our proof since numerical results suggest that item~\emph{3.} remains valid for all $\mu<\mu_0$.  Moreover, the assumption on the parity of $f$ is also technical, and we conjecture that if $f$ is not odd, then $a_F$ will still have a discrete and countable number of zeroes with respect to $x_2$, such that
	\begin{equation}\label{eq::zeroes of a_F}
		a_F^{-1}(\{0\}) = \R\times\left\{z_k\in\left]\frac{k}{2\lambda},\frac{k+1}{2\lambda}\right[\mid k\in\Z\right\},
	\end{equation}
	and, for all $k\in\Z$,
	\begin{equation}\label{eq::gap between z-k and tau_k}
		|z_k-\tau_k|\le\frac{\arcsin(\mu\mu_0^{-1})}{2\pi\lambda},\qquad\mbox{where}\qquad \tau_k := \frac{2k+1}{4\lambda}.
	\end{equation}
	The gap between the zeroes of $a_F$ and those of $P_F$ provided by \eqref{eq::gap between z-k and tau_k} shows that on each interval, $z_k$ and $\tau_k$ become arbitrarily close depending on whether $ \mu$ is not closed to $\mu_0$. Nevertheless, if $\lambda$ is taken sufficiently large, $z_k$ and $\tau_k$ become arbitrarily close independently of $\mu_0-\mu$.
\end{remark}

\begin{proof}[Proof of Theorem~\ref{thm::characterization of properties on a_F}]
	We assume that $\lambda=1$ in the sequel for ease of notation.
	We know by item \emph{3.} of Proposition~\ref{pro::equivariance of the convolution operator} that if $P_F$ or $a_F$ has a subgroup of $\mathbf{E}(2)$ as a group of symmetry, the other has the same subgroup as a group of symmetry and conversely. Since $P_F(x_1,x_2)$ is independent on $x_1$, it follows that $a_F(x_1,x_2)$ is also independent on $x_1$ for all $(x_1,x_2)\in\R^2$. Similarly, since $P_F$ is invariant under the action of the reflection with respect to the straight $x_1 = 0$, that is, $P_F(x_1,-x_2) = P_F(x_1,x_2)$ for all $(x_1,x_2)\in\R^2$, one deduces that $a_F(x_1,-x_2) = a_F(x_1,x_2)$. Thus, $a_F$ is an even function with respect to $x_2$. Similarly, since $P_F$ is invariant under the translation by vector $(0,-1)\in\R^2$, it follows that $a_F$ is also invariant under this translation so that $a_F$ is $1$-periodic with respect to $x_2$. The fact that $a_F$ is infinitely differentiable on $\R^2$ follows immediately from that $P_F\in C^\infty(\R^2)$, the kernel $\omega\in\cS(\R^2)\subset C^\infty(\R^2)\cap L^1(\R^2)$ and that $f$ is bounded. Writing now $a_F(x_2):=a_F(x_1,x_2)$ for notational ease, we obtain that $a_F$ is also given by
	\begin{equation}\label{eq::nonlinear 1D ss}
		a_F(x_2) = \cos(2\pi x_2)+\mu[\omega_1\ast f(a_F)](x_2),\qquad x_2\in\R,
	\end{equation}
	where $\omega_1$  is a $1$D difference of Gaussian kernel . Let $a_F':=\partial_{x_2}a_F$, then due to \eqref{eq::nonlinear 1D ss}, one obtains
	\begin{equation}\label{eq::nonlinear 1D ss for derivative}
		a_F'(x_2) = -2\pi\sin(2\pi x_2)+\mu[\omega_1'\ast f(a_F)](x_2),\qquad x_2\in\R.
	\end{equation}
	Since $\|f\|_\infty\le 1$ by assumption, it follows that $\|a_F'\|_\infty\le 2\pi+\mu\|\omega_1'\|_1<\infty$. Therefore, $a_F$ is Lipschitz continuous. 
	
	We now present an argument to prove item~\emph{3.} of  Theorem~\ref{thm::characterization of properties on a_F}. Notice that $a_F = \Psi(P_F)$ satisfies
	\begin{equation}
		\label{eq:AT}
		a_F = P_F + \mu\omega\ast f(a_F).
	\end{equation}
	Let $x^{*}:=(x_1^{*},x_2^{*})\in\R^2$ be such that $P_F(x^{*}) = 0$. It follows from \eqref{eq:AT} that
	\begin{equation}\label{eq::zero Psi intermediate}
		a_F(x^{*})  = \mu\int_{\R^2}\omega(y)f(a_F(x^{*}-y))dy.
	\end{equation}
	By using \eqref{eq:AT} once again, one obtains 
	\begin{equation}\label{eq::Psi intermediate}
		a_F(x^{*}-y) = \cos(2\pi(x_2^{*}-y_2))+\mu\int_{\R^2}\omega(y-z)f(a_F(x^{*}-z))dz.
	\end{equation}
	We introduce the map $\displaystyle g_3:y:=(y_1,y_2)\in\R^2\longmapsto g_3(y) := a_F(x^{*}-y)$. Then, it is straightforward to observe that, $g_3$ is the unique solution of \eqref{eq::Psi intermediate} for every $\mu<\mu_0$, and it holds
	$$
	-a_F(x^{*}+y) = \cos(2\pi(x_2^*-y_2))+\mu\int_{\R^2}\omega(y-z)f(-a_F(x^{*}+z))dz,
	$$
	since $f$ is odd. So the function $y\in\R^2\mapsto -g_3(-y)$ is also solution of \eqref{eq::Psi intermediate}. By uniqueness of solution, one has $g_3(-y) = -g_3(y)$ and that $y\in\R^2\longmapsto\omega(y)f(a_F(x^{*}-y))$ is an odd function, since $\omega$ is symmetric and $f$ is and odd function. It follows that the right-hand side of \eqref{eq::zero Psi intermediate} is equal to $0$.
	
	To show the converse inclusion, let $x^{*}:=(x_1^*,x_2^*)\in\R^2$ verifying $a_F(x^{*}) = 0$. From \eqref{eq:AT}, it follows
	\begin{equation}\label{eq::useful-1}
		\cos(2\pi x_2^*) = -\mu\int_{\R^2}\omega(y)f(a_F(x^{*}-y))dy.
	\end{equation}
	Developing $\cos(2\pi(x_2^{*}-y_2))$ and replacing in \eqref{eq::Psi intermediate} $\cos(2\pi x_2^*)$ by its expression in \eqref{eq::useful-1}, one obtains for $y\in\R^2$,
	\begin{equation}\label{eq:x-y}
		a_F(x^{*}-y) =\sin(2\pi x_2^*)\sin(2\pi y_2)
		+\mu\int_{\R^2}k(y,z)f(a_F(x^{*}-z))dz,
	\end{equation}
	where $k(y,z) := \omega(y-z)-\cos(2\pi y_2)\omega(z),$ satisfies
	\begin{equation}
		K:=\sup\limits_{y\in\R^2}\int_{\R^2}|k(y,z)|dy\le 2\|\omega\|_1.
	\end{equation}
	Since $\mu<\mu_0/2$, the contracting mapping principle shows that for every $I\in L^\infty(\R^2)$ there exists a unique solution $b\in L^\infty(\R^2)$ to
	\begin{equation}\label{eq::fixed point 1}
		b(y) = I(y)+\mu\int_{\R^2}k(y,z)f(b(z))dz.
	\end{equation}
	By \eqref{eq:x-y}, function $b(y):=a_F(x^{*}-y)$ is the unique solution  of the above equation associated with $I(y)=\sin(2\pi x_2^*)\sin(2\pi y_2)$.
	
	On the other hand, since $\omega$ is symmetric and the sigmoid $f$ is an odd function, we have also for $\operatorname{a.e.}, y\in\R^2$,
	\begin{equation}
		-a_F(x^{*}+y) =
		\sin(2\pi x_2^*)\sin(2\pi y_2)
		+\mu\int_{\R^2}k(y,z)f(-a_F(x^{*}+z))dz,
	\end{equation}
	so that, the function $\tilde b(y) = -b(-y)$ is also solution of Equation~\eqref{eq::fixed point 1} associated with the input $I(y)=\sin(2\pi x_2^*)\sin(2\pi y_2)$. By uniqueness of solution, one then has $b(-y) = -b(y)$ for $\operatorname{a.e.}, y\in\R^2$. This shows that $y\mapsto\omega(y)f(a_F(x^{*}-y))$ is an odd function on $\R^2$, since $\omega$ is symmetric and $f$ is an odd function, which implies that the r.h.s. of \eqref{eq::useful-1} is equal to $0$ and thus that $x^*\in P_F^{-1}(\{0\})$.
\end{proof}

The proof of the following corollary follows the same lines as that of Corollary~\ref{cor::no MacKay effect in the linear regime}.
\begin{corollary}\label{cor::no MacKay effect in the nonlinear regime}
	Under assumption, $\mu<\mu_0$, $a_F$ (resp. $a_T$) is a funnel (resp. tunnel) pattern in shape. In particular, Equation~\eqref{eq::NF-intro} with a sigmoid activation function cannot reproduce the MacKay effect starting with a sensory input equal to $P_F$ or $P_T$.
\end{corollary}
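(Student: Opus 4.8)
The plan is to read the statement off directly from the structural information on the stationary state already obtained in Theorem~\ref{thm::characterization of properties on a_F}, in exactly the way Corollary~\ref{cor::no MacKay effect in the linear regime} was deduced from Theorem~\ref{thm::stationary input in linear regime}. Since $P_F$ and $P_T$ differ only by the interchange of Cartesian coordinates $x_1\leftrightarrow x_2$ and $\omega$ is isotropic, one has $a_T=\Psi(P_T)=\Psi(P_F)\circ\sigma$ with $\sigma(x_1,x_2)=(x_2,x_1)$; hence it suffices to analyse $a_F=\Psi(P_F)$ and transport the conclusion to $a_T$ through this reflection.

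First I would invoke Theorem~\ref{thm::characterization of properties on a_F}: for every $\mu<\mu_0$ the stationary state $a_F\in L^\infty(\R^2)$ depends solely on $x_2$, and is moreover smooth, even and $1/\lambda$-periodic in that variable (items~1--2). Consequently its zero level-set has the form $a_F^{-1}(\{0\})=\R\times Z$ for some closed $1/\lambda$-periodic set $Z\subset\R$, so that by the definition recalled in Section~\ref{ss:: Binary pattern} the binary pattern $B_{a_F}$ is a function of $x_2$ alone. This is precisely the structure of $B_{P_F}$, whose zero level-set is $\R\times\{(2k+1)/(4\lambda)\mid k\in\Z\}$: both are patterns of stripes parallel to the $x_1$-axis in V1, i.e. funnel patterns (fan shapes) in the visual field once transported through the inverse of the retino-cortical map~\eqref{eq::retino-cortical}. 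I would also observe that $a_F$ is non-constant, since otherwise $a_F=P_F+\mu\,\omega\ast f(a_F)$ with $\omega\ast f(a_F)$ constant would force $P_F$ to be constant; so $B_{a_F}$ is a genuine funnel-type pattern. Applying the reflection $\sigma$ then gives that $a_T$ depends solely on $x_1$ and is a tunnel pattern in shape, like $P_T$. Finally, reproducing the MacKay effect starting from $I=P_F$ would require $B_{a_F}$ to exhibit, superimposed on the funnel, an illusory tunnel component, i.e. concentric rings, which in cortical coordinates are stripes parallel to the $x_2$-axis and hence genuinely depend on $x_1$; since $a_F$ does not depend on $x_1$, no such component can appear, and symmetrically for $I=P_T$. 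This yields the corollary.

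The argument is short, and the only point that needs care --- the reason it only follows ``along the same lines'' as, and not identically to, Corollary~\ref{cor::no MacKay effect in the linear regime} --- is that, without the extra hypotheses $\mu<\mu_0/2$ and $f$ odd of item~3 of Theorem~\ref{thm::characterization of properties on a_F}, one does not control the precise location or number of the zeroes of $a_F$ (one cannot assert $Z=\{(2k+1)/(4\lambda)\mid k\in\Z\}$, only that $Z$ is closed, even and $1/\lambda$-periodic). Thus the phrase ``funnel pattern in shape'' must here be read at the level of the orientation of the stripes of $B_{a_F}$, equivalently the $x_1$-independence of $a_F$ --- and this is exactly what is needed to exclude the emergence of the tunnel (ring) component characteristic of the MacKay effect.
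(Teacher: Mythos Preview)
Your proposal is correct and follows essentially the same approach as the paper, which simply states that the proof ``follows the same lines as that of Corollary~\ref{cor::no MacKay effect in the linear regime}''; you have merely spelled out in detail what that one-line reference leaves implicit, correctly invoking items~1--2 of Theorem~\ref{thm::characterization of properties on a_F} (which hold for all $\mu<\mu_0$) rather than item~3, and your discussion of why ``funnel pattern in shape'' must be read at the level of stripe orientation is a welcome clarification that the paper glosses over.
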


\begin{remark}
	By following the lines in the proof of Theorem~\ref{thm::characterization of properties on a_F}, we can notice that it is only sufficient for the kernel $\omega$ to be homogeneous and isotropically invariant to obtain the desired results.
\end{remark}
	
\subsection{The visual MacKay effect with a linear response function}\label{ss::theoretical MacKay effect}

The results we provide in this section aim to replicate the MacKay effect using Equation~\eqref{eq::NF-intro} when the response function $f$ is linear. The Corollary~\ref{cor::no MacKay effect in the linear regime} shows that, for our model of cortical activity in V1, one cannot obtain the MacKay effect in the linear regime without breaking the Euclidean plane symmetry of the sensory input when chosen equal to $P_F$ or $P_T$. Our purpose now is to show that Equation~\eqref{eq::NF-intro} with the linear response function and sensory input $I\in \{P_F, P_T\}+\varepsilon v$ reproduces the MacKay effect. Here, $v$ is a suitable control function that should model the localized information in MacKay's stimuli.
\begin{remark}
	We notice that only the description of the MacKay effect related to the funnel pattern will be shown for ease of presentation and reader convenience.  Then, in the rest of this section, we focus on describing the MacKay effect related to the ``MacKay rays''; see Fig~\ref{fig:mackay}.
\end{remark}

{One of the essential characteristics of the retino-cortical map, i.e. the way the visual field is projected into V1, is that small objects located in the fovea, which is the center of the visual field, have a much more extensive representation in V1 than similar objects located in the peripheral visual field. As a result, for the cortical representation of the ``MacKay rays'' visual stimulus, we choose a sensory input in Equation~\eqref{eq::NF-intro} as $I(x) = P_F(x)+\varepsilon H(\theta-x_1)$, where $\varepsilon>0$, $\theta\in\R$ and $H$ is the Heaviside step function. This choice models the highly localized information in the center of the funnel pattern \textit{created by the very fast alternation of black and white rays}. It is worth noting that this corresponds to localized information in horizontal stripes in the left area of the cortex}.


To keep the presentation as clear as possible for reader convenience, we let $\theta = 0$, and we assume that the cortical representation of the ``MacKay rays'' visual stimulus is given by
\begin{equation}\label{eq::input for MacKay rays}
	I(x) = \cos(2\pi\lambda x_2)+\varepsilon H(-x_1),\qquad\lambda,\;\varepsilon>0,\; x:=(x_1,x_2)\in\R^2.
\end{equation}


The sensory input $v(x_1,x_2) = H(-x_1)$ is dependent only on the variable $x_1$. As a result of Remark~\ref{rmk::equivariance of the convolution operator}, the associated stationary output $b$ is also dependent solely on that variable. Therefore, our current task is to compute the solution $b$ of the following equation
\begin{equation}\label{eq::SS dim 1}
	b(x) = I(x)+\mu(\omega_1\ast b)(x),\qquad\qquad x\in\R,
\end{equation}
where $I(x) = H(-x)$ and the $1$-D kernel $\omega_1$ is given by
\begin{equation}\label{eq::connectivity 1-D}
	\omega_1(x) = [\sigma_1\sqrt{2\pi}]^{-1}e^{-\frac{x^2}{2\sigma_1^2}}-\kappa[\sigma_2\sqrt{2\pi}]^{-1}e^{-\frac{x^2}{2\sigma_2^2}},\qquad x\in\R.
\end{equation}
\begin{equation}\label{eq::fourier transform in 1d}
	\widehat{\omega_1}(\xi)=e^{-2\pi^2\sigma_1^2\xi^2}-\kappa
	e^{-2\pi^2\sigma_2^2\xi^2}.
\end{equation}
\begin{lemma}\label{lem::important}
	Let $I\in\cS'(\R)$ and the kernel $\omega_1\in\cS(\R)$ be defined by \eqref{eq::connectivity 1-D}. Under the assumption $\mu<\mu_c$, there is a unique solution $b\in\cS'(\R)$ to Equation~\eqref{eq::SS dim 1}, which is given by
	\begin{equation}\label{eq::solution of SS DoG}
		b = I+\mu K\ast I.
	\end{equation}
	Here the kernel $K\in\cS(\R)$ is defined of all $x\in\R$ by
	\begin{equation}\label{eq:: K}
		K(x) = \int_{-\infty}^{+\infty} e^{2i\pi\xi x}\widehat{K}(\xi) d\xi,\qquad\mbox{where}\qquad \widehat{K}(\xi) = \frac{\widehat{\omega_1}(\xi)}{1-\mu\widehat{\omega_1}(\xi)},\qquad\qquad\forall \xi\in\R.
	\end{equation}
\end{lemma}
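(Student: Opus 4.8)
The plan is to solve Equation~\eqref{eq::SS dim 1} by passing to the Fourier side, where spatial convolution becomes multiplication. First I would rewrite \eqref{eq::SS dim 1} as $(\idty - \mu\,\omega_1\ast\,)b = I$ in $\cS'(\R)$, and apply the Fourier transform (which is an isomorphism of $\cS'(\R)$ onto itself) to obtain the algebraic relation $(1-\mu\widehat{\omega_1}(\xi))\widehat b(\xi) = \widehat I(\xi)$. The key point is that under the hypothesis $\mu<\mu_c = (\max_\xi\widehat{\omega_1}(\xi))^{-1}$ one has $1-\mu\widehat{\omega_1}(\xi) > 0$ for every $\xi\in\R$, and moreover, since $\widehat{\omega_1}(\xi)\to 0$ as $|\xi|\to\infty$ (as $\omega_1\in\cS(\R)$), the function $1-\mu\widehat{\omega_1}(\xi)$ is bounded below by a strictly positive constant and above by $1+\mu\|\widehat{\omega_1}\|_\infty$. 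Hence $m(\xi):=(1-\mu\widehat{\omega_1}(\xi))^{-1}$ is a well-defined, smooth, bounded function, all of whose derivatives are bounded (a routine consequence of $\widehat{\omega_1}\in\cS(\R)$ and the uniform lower bound on the denominator), i.e. $m$ is a multiplier of $\cS'(\R)$. This gives existence and uniqueness: $\widehat b = m\,\widehat I$, hence $b = \cF^{-1}(m\,\widehat I)$.

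Next I would massage the multiplier into the stated form. Writing
\begin{equation*}
	m(\xi) = \frac{1}{1-\mu\widehat{\omega_1}(\xi)} = 1 + \frac{\mu\widehat{\omega_1}(\xi)}{1-\mu\widehat{\omega_1}(\xi)} = 1 + \mu\,\widehat K(\xi),
\end{equation*}
where $\widehat K(\xi) := \widehat{\omega_1}(\xi)/(1-\mu\widehat{\omega_1}(\xi))$, I would then check that $\widehat K\in\cS(\R)$: indeed $\widehat{\omega_1}\in\cS(\R)$ and $\widehat K = \widehat{\omega_1}\cdot m$ with $m$ smooth and all derivatives bounded, so the product and all its derivatives decay rapidly; thus $\widehat K\in\cS(\R)$ and consequently $K:=\cF^{-1}(\widehat K)\in\cS(\R)$ as well, with the inverse-transform integral representation in \eqref{eq:: K} valid since $\widehat K$ is integrable. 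Plugging back, $\widehat b = \widehat I + \mu\,\widehat K\,\widehat I = \widehat I + \mu\,\widehat{K\ast I}$, where the convolution $K\ast I$ of a Schwartz function with a tempered distribution is well-defined and its Fourier transform is $\widehat K\cdot\widehat I$ (a standard fact). Taking inverse Fourier transforms yields $b = I + \mu\,K\ast I$, which is \eqref{eq::solution of SS DoG}.

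The only genuinely delicate point — and the one I would spend the most care on — is establishing that $m$ is a legitimate multiplier on $\cS'(\R)$, i.e. that multiplication by $m$ maps $\cS(\R)$ continuously to itself (so that by duality it acts on $\cS'(\R)$). This requires the uniform positive lower bound $\inf_\xi(1-\mu\widehat{\omega_1}(\xi)) = 1 - \mu\max_\xi\widehat{\omega_1}(\xi) = 1-\mu/\mu_c > 0$, which is exactly where the assumption $\mu<\mu_c$ enters, together with the Faà di Bruno / product-rule bookkeeping showing that each derivative $m^{(k)}$ is a finite sum of terms of the form $(\text{polynomial in derivatives of }\widehat{\omega_1})\cdot(1-\mu\widehat{\omega_1})^{-(k+1)}$, all bounded. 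Everything else — the isomorphism property of $\cF$ on $\cS'$, the convolution theorem for $\cS\ast\cS'$, and the uniqueness coming from injectivity of multiplication by the nowhere-vanishing $1-\mu\widehat{\omega_1}$ — is standard and I would invoke it without belaboring the details.
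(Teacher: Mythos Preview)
Your proposal is correct and follows essentially the same approach as the paper's proof: apply the Fourier transform to \eqref{eq::SS dim 1}, invert the multiplier $1-\mu\widehat{\omega_1}$ (which is nowhere vanishing precisely because $\mu<\mu_c$), and recognize $\widehat K$ as the product of the Schwartz function $\widehat{\omega_1}$ with the smooth function $m=(1-\mu\widehat{\omega_1})^{-1}$. Your write-up is in fact more careful than the paper's, which simply asserts that $\widehat K\in\cS(\R)$ ``as the product of a $C^\infty(\R)$ function and an element of $\cS(\R)$''; you correctly supply the extra observation that $m$ has all derivatives bounded, which is what is actually needed for that product to remain Schwartz and for $m$ to act as a multiplier on $\cS'(\R)$.
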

\begin{proof}
	First of all, under assumptions on $I$ and $\omega_1$, we have that Equation~\eqref{eq::SS dim 1} is well-posed in $\cS'(\R)$. Then taking respectively the Fourier transform of \eqref{eq::SS dim 1} and the inverse Fourier transform in the space $\cS'(\R)$, we find that $b\in\cS'(\R)$ is given by \eqref{eq::solution of SS DoG} with $K\in\cS(\R)$ defined as in \eqref{eq:: K}. Indeed, observe that $\widehat{K}$ is well-defined on $\R$ due to hypothesis $\mu<\mu_c$, with $\mu_c$ being defined in \eqref{eq::parameter mu_c}, and it belongs to the Schwartz space $\cS(\R)$ as the product of a $C^\infty(\R)$ function and an element of $\cS(\R)$.
\end{proof}

Due to Lemma~\ref{lem::important}, inverting the kernel $K$ defined in \eqref{eq:: K} and providing an asymptotic behaviour of its zeroes on $\R$ will help to provide detailed information on the qualitative properties of the function $b$ as given by \eqref{eq::solution of SS DoG}. To achieve this, we use tools from complex. 

Let us consider the extension of $\widehat{K}$ in the set $\C$ of complex numbers,
\begin{equation}
	\widehat{K}(z) = \frac{\widehat{\omega_1}(z)}{1-\mu\widehat{\omega_1}(z)},\qquad z\in\C.
\end{equation}
Then $\widehat{K}$ is a meromorphic function on $\C$, and its poles are zeroes of the entire function
\begin{equation}\label{eq::exponential polynomial h}
	h(z):= 1-\mu e^{-2\pi^2\sigma_1^2z^2}+\kappa\mu e^{-2\pi^2\sigma_2^2z^2},\qquad z\in\C.
\end{equation}
\begin{remark}\label{rmk::exponential polynomial}
	The holomorphic function $h$ is an exponential polynomial \cite[Chapter 3]{berenstein2012} in $-z^2$ with frequencies $\alpha_0 = 0$, $\alpha_1 = 2\pi^2\sigma_1^2$ and $\alpha_2 = 2\pi^2\sigma_2^2$ satisfying $\alpha_0<\alpha_1<\alpha_2$ due to assumptions on $\sigma_1$ and $\sigma_2$. It is \textit{normalized} since the coefficient of $0$-frequency equals $1$. A necessary condition for $h$ for being \textit{factorizable} \cite[Remark~3.1.5, p.~201]{berenstein2012} is that parameters $\sigma_1$ and $\sigma_2$ are taken so that it is \textit{simple}. By definition \cite[Definition 3.1.4, p. 201]{berenstein2012}, $h$ is simple if $\alpha_1$ and $\alpha_2$ are commensurable, i.e., $\alpha_1/\alpha_2\in\Q$, which is equivalent to $\sigma_1^2/\sigma_2^2\in \Q$. Here $\Q$ denote the set of rational numbers.
\end{remark}
\begin{remark}\label{rmk::particular considerations}
	For ease in computation and pedagogical presentation, we assume in the rest of this section that parameters in the kernel $\omega$ defined in \eqref{eq::connectivity} and then in the $1$-D kernel $\omega_1$ defined in \eqref{eq::connectivity 1-D} are given by $\kappa=1$, $2\pi^2\sigma_1^2=1$ and $2\pi^2\sigma_2^2=2$. In this case, one has $\mu_0 := \|\omega\|_1^{-1} = 2$ and $\mu_c:= \widehat{\omega}(\xi_c)^{-1} = 4$. In particular $\omega$ satisfies the balanced condition $\widehat{\omega}(0) =0$. Then, assuming in this particular consideration that $\mu:=1<\mu_0=2$ is not a loss of generality.
\end{remark}

The main result of this section is then the following.
\begin{theorem}\label{thm::stationary state for MacKay rays}
	Assume that the response function $f$ is linear and the input $I$ is given by \eqref{eq::input for MacKay rays}. Under the considerations of Remark~\ref{rmk::particular considerations}, the unique stationary state to Equation~\eqref{eq::NF-intro} is given for all $(x_1,x_2)\in\R^2$ by
	\begin{equation}\label{eq::stationary state for MacKay rays}
		a_I(x_1,x_2) = \frac{\cos(2\pi\lambda x_2)}{1-\mu\widehat{\omega}(\xi_0)}+\varepsilon g(x_1),\qquad\qquad\xi_0:=(0,\lambda),
	\end{equation}
	where $g:\R\to\R$ has a discrete and countable set of zeroes on $(0,+\infty)$.
\end{theorem}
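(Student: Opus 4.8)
The plan is to decompose the stationary state into its two Euclidean-symmetry components and analyze each separately. First I would invoke linearity of $f$ together with the superposition principle: since $f$ is linear and $\mu<\mu_0$, Theorem~\ref{thm::existence of stationary input} guarantees a unique stationary state $a_I$, and because Equation~\eqref{eq::SS} is linear in $I$, we may write $a_I = a_{P_F} + \varepsilon\, a_v$, where $a_{P_F}$ solves \eqref{eq::SS} with input $P_F(x)=\cos(2\pi\lambda x_2)$ and $a_v$ solves it with input $v(x)=H(-x_1)$. For the first summand, Theorem~\ref{thm::stationary input in linear regime} (applied with $\xi_0=(0,\lambda)$) gives immediately $a_{P_F}(x)=\cos(2\pi\lambda x_2)/(1-\mu\widehat{\omega}(\xi_0))$, which is the first term in \eqref{eq::stationary state for MacKay rays}. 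For the second summand, Remark~\ref{rmk::equivariance of the convolution operator} reduces the problem to a one-dimensional equation: $a_v(x_1,x_2)=g(x_1)$ where $g$ solves $g = H(-\cdot) + \mu(\omega_1\ast g)$ on $\R$, and Lemma~\ref{lem::important} gives $g = H(-\cdot) + \mu K\ast H(-\cdot)$ with $\widehat K(\xi)=\widehat{\omega_1}(\xi)/(1-\mu\widehat{\omega_1}(\xi))$.

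The substance of the theorem is the claim that $g$ has a discrete and countable zero set on $(0,+\infty)$. Here I would make the particular parameter choices of Remark~\ref{rmk::particular considerations} ($\kappa=1$, $2\pi^2\sigma_1^2=1$, $2\pi^2\sigma_2^2=2$, $\mu=1$) so that $\widehat{\omega_1}(\xi)=e^{-2\pi^2\sigma_1^2\xi^2}-e^{-2\pi^2\sigma_2^2\xi^2}$ becomes, after the substitution $u=-z^2$, a quadratic in $e^{u}$: $h(z)=1-e^{-2\pi^2\sigma_1^2 z^2}+e^{-2\pi^2\sigma_2^2 z^2}$, whose zeros can be found by solving a quadratic equation for $e^{-2\pi^2\sigma_1^2 z^2}$. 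The poles of $\widehat K$ are then located explicitly; they come in a discrete, countable family in $\C$ (as predicted by the exponential-polynomial theory of Remark~\ref{rmk::exponential polynomial}, since $\sigma_1^2/\sigma_2^2=1/2\in\Q$ makes $h$ simple). The next step is to compute $K(x)$ for $x>0$ (and for $x<0$) by a residue/contour-integral argument: push the contour in $\widehat K(\xi)=\int e^{2\pi i\xi x}\widehat K(\xi)\,d\xi$ into the appropriate half-plane, pick up the residues at the poles of $\widehat K$, and obtain a convergent series of damped exponentials times trigonometric functions. Convolving this explicit $K$ against $H(-\cdot)$ — an elementary integration since $K\in\cS(\R)$ — yields a closed-form expression for $g(x_1)$ as (a constant plus) a sum of exponentially decaying oscillatory terms.

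Once $g$ is written as such an explicit (quasi-periodic-like, exponentially modulated) analytic function on $(0,+\infty)$, discreteness of its zero set is automatic: $g$ is real-analytic and not identically zero, so its zeros cannot accumulate at any finite point; countability on $(0,+\infty)$ then follows since $\R$ is second-countable, or more concretely by showing the zeros interlace the crossing times of the dominant oscillatory mode for large $x_1$ (the subleading exponentials being uniformly small perturbations there). I would also check $g\not\equiv 0$ directly, e.g.\ from $\widehat g$ or from the jump of $H(-\cdot)$ at the origin propagating through the smoothing by $K$.

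\textbf{Main obstacle.} The delicate point is the residue computation for $K$: one must verify that the contour can indeed be deformed (controlling $\widehat K(\xi)=\widehat{\omega_1}(\xi)/(1-\mu\widehat{\omega_1}(\xi))$ on large semicircles, where $\widehat{\omega_1}$ involves $e^{-2\pi^2\sigma_j^2 z^2}$ which blows up in certain sectors of $\C$), that the poles are all simple so the residues are clean, and that the resulting series of residues converges and genuinely represents $K$ in $\cS(\R)$. Equivalently, one needs a precise enough description of the location of the zeros of $h$ — in particular that they stay off the real axis and have real parts (of $z$) growing so that the associated exponentials decay — which is exactly where the exponential-polynomial structure and the parameter normalization of Remark~\ref{rmk::particular considerations} are essential. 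After that, extracting discreteness of the zero set of $g$ on $(0,+\infty)$ is comparatively routine analyticity bookkeeping.
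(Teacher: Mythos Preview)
Your proposal is correct and follows essentially the same route as the paper: linearity/superposition to split off the $P_F$ term (handled by Theorem~\ref{thm::stationary input in linear regime}), reduction of the Heaviside piece to the one-dimensional equation via Remark~\ref{rmk::equivariance of the convolution operator} and Lemma~\ref{lem::important}, explicit inversion of $K$ by a residue computation (this is the paper's Theorem~\ref{thm::main 2}), and then integrating against $H(-\cdot)$ to obtain $g$ as a series of damped oscillations (Proposition~\ref{pro::ss for Heaviside}). Your identification of the contour-deformation step as the main obstacle is exactly right; the paper handles it by choosing radii $R_n=\sqrt{n\pi}$ and splitting the semicircle into three arcs.

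The one genuine methodological difference is how you conclude discreteness of the zero set of $g$ on $(0,+\infty)$. You propose real-analyticity plus $g\not\equiv 0$ as the primary route. This is valid --- on $(0,+\infty)$ one has $g(x_1)=\mu\int_{x_1}^{+\infty}K(u)\,du$, and the explicit residue series extends holomorphically to a strip, so zeros cannot accumulate at finite points --- and it is shorter than what the paper does. The paper instead proves a quantitative version: it isolates the dominant mode, bounds the tail by $O(1/x)$, and shows the zeros of $g$ interlace those of $\cos(\pi/3+\pi x\sqrt{2\pi/3})$ with an explicit error (Proposition~\ref{pro::ss for Heaviside}, mirroring Lemma~\ref{Lem:estim0} and Proposition~\ref{prop::zeroes of K}). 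Your analyticity shortcut suffices for the theorem as stated; the paper's approach buys the extra asymptotic localization of the zeros, which is what actually pins down the after-image as a tunnel pattern.
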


Observe that under the assumption that the response function $f$ is linear, Equation~\eqref{eq::NF-intro} becomes linear. It follows that the first term in the r.h.s. of \eqref{eq::stationary state for MacKay rays} is the stationary output associated with the input $P_F$ by using Theorem~\ref{thm::stationary input in linear regime}, and $b$ is the stationary output associated with the sensory input $v(x_1,x_2) = H(-x_1)$.
Consequently, Theorem~\ref{thm::stationary state for MacKay rays} follows from the following proposition. The proof is an immediate consequence of Lemma~\ref{lem::important}, Theorem~\ref{thm::main 2} and Proposition~\ref{prop::zeroes of K} given in Section~\ref{ss::complement MacKay linear regime}.

\begin{proposition}\label{pro::ss for Heaviside}
	Let $I(x) = H(-x)$, $x\in\R$, $H$ being the Heaviside step function. Under the considerations of Remark~\ref{rmk::particular considerations}, the solution $b\in L^\infty(\R)$ of \eqref{eq::SS dim 1} is given, for $x>0$, by
	\begin{equation}\label{eq::sol heaviside}
		e^{\pi x\sqrt{\frac{2\pi}{3}}}b(x) =\frac{\sqrt{3}}{\pi}\cos\left(\frac{\pi}{3}+\pi x\sqrt{\frac{2\pi}{3}}\right)+O\left(\frac{1}{x}\right).
	\end{equation}
	Moreover, letting $(\theta_k)_{k\in\N^{*}}$ and $(\tau_k)_{k\in\N^{*}}$ be respectively zeroes and extrema of $x\mapsto \cos(\pi/3+\pi x\sqrt{2\pi/3})$ for $x>0$, the zeroes of $b$ in $(0,+\infty)$ are a countable sequence $(\rho_k)_{k\in\N^{*}}$  such that $\rho_k$ is unique in the interval $J_k:=]\tau_k,\tau_{k+1}[$ for all $k\in\N^{*}$ and it holds
	\begin{equation}\label{eq::asymptotic of zeroes of b}
		|\theta_{k+1}-\rho_k|\le\frac{\sqrt{6}}{2\pi^2}\arcsin\left(\frac{2\sqrt{5}}{5\pi(3k-1)}\right),\qquad\forall k\in\N^{*}.
	\end{equation}
\end{proposition}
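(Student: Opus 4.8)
The strategy is to turn \eqref{eq::SS dim 1} into an explicit contour integral and read off the leading term by residues. By Lemma~\ref{lem::important}, which applies since $\mu=1<\mu_c=4$ under Remark~\ref{rmk::particular considerations}, the unique tempered solution for $I=H(-\cdot)$ is $b=I+\mu K\ast I=H(-\cdot)+K\ast H(-\cdot)$ (recall $\mu=1$); as $K\in\cS(\R)\subset L^1(\R)$ and $H(-\cdot)\in L^\infty(\R)$, this $b$ belongs to $L^\infty(\R)$ and is therefore the unique $L^\infty$ solution. For $x>0$ one has $b(x)=\int_x^{+\infty}K(t)\,dt$, and since $\widehat{\omega_1}(0)=0$ forces $\widehat K(0)=0$, the map $\xi\mapsto\widehat K(\xi)/\xi$ extends to an element of $\cS(\R)$ and
\[
b(x)=\frac{i}{2\pi}\int_{\R}\frac{\widehat K(\xi)}{\xi}\,e^{2\pi i\xi x}\,d\xi,\qquad x>0.
\]
With the parameters of Remark~\ref{rmk::particular considerations} one has $\widehat K(z)=(e^{-z^2}-e^{-2z^2})/h(z)$ with $h(z)=1-e^{-z^2}+e^{-2z^2}$, whose (simple) poles are the zeros of $h$, i.e.\ the solutions of $e^{-z^2}\in\{e^{i\pi/3},e^{-i\pi/3}\}$, equivalently $z^2\in\pm\tfrac{i\pi}{3}+2\pi i\Z$. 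The two poles nearest to $\R$ are $z_{\pm}=\sqrt{\pi/6}\,(\pm 1+i)$, both of imaginary part $\sqrt{\pi/6}$, and every other pole in the open upper half-plane has imaginary part at least $\sqrt{5\pi/6}$.

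The next step is a contour shift. Fix $c\in(\sqrt{\pi/6},\sqrt{5\pi/6})$ and integrate $\tfrac{i}{2\pi}\,\tfrac{\widehat K(z)}{z}\,e^{2\pi izx}$ around the rectangle with vertices $\pm R$, $\pm R+ic$. For $x>0$ one has $|e^{2\pi izx}|\le 1$ there, while on the two vertical sides $z^2\to+\infty$ as $R\to\infty$, so $e^{-z^2},e^{-2z^2}\to0$ super-exponentially, $h(z)\to1$ and $\widehat K(z)/z\to 0$; hence the vertical contributions vanish. Letting $R\to\infty$ and collecting the residues at $z_{\pm}$ yields
\[
b(x)=-\Res_{z_+}\!\Big[\tfrac{\widehat K(z)}{z}e^{2\pi izx}\Big]-\Res_{z_-}\!\Big[\tfrac{\widehat K(z)}{z}e^{2\pi izx}\Big]+R(x),\qquad R(x)=\frac{i}{2\pi}\int_{\R+ic}\frac{\widehat K(z)}{z}\,e^{2\pi izx}\,dz,
\]
with $|R(x)|\le C_c\,e^{-2\pi cx}$. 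At $z_{\pm}$ one has $e^{-z^2}-e^{-2z^2}=1$ and, from $h'(z)=2z(e^{-z^2}-2e^{-2z^2})$, the value $z_+h'(z_+)=\tfrac{2\pi}{\sqrt3}e^{2i\pi/3}$ (and the complex-conjugate value at $z_-=-\overline{z_+}$), while $e^{2\pi iz_{\pm}x}=e^{-\alpha x}e^{\pm i\alpha x}$ with $\alpha:=2\pi\sqrt{\pi/6}=\pi\sqrt{2\pi/3}$. Since $z_-=-\overline{z_+}$ the two residues are complex conjugates, so their sum is real; a short trigonometric simplification gives, for $x>0$,
\[
e^{\alpha x}b(x)=\frac{\sqrt3}{\pi}\cos\!\Big(\frac{\pi}{3}+\alpha x\Big)+e^{\alpha x}R(x),\qquad |e^{\alpha x}R(x)|\le C_c\,e^{-(2\pi c-\alpha)x},
\]
which is \eqref{eq::sol heaviside} (the exponentially small remainder being in particular $O(1/x)$).

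To locate the zeros, write $\tfrac{\pi}{\sqrt3}e^{\alpha x}b(x)=\cos(\pi/3+\alpha x)+E(x)$ with $E(x)=\tfrac{\pi}{\sqrt3}e^{\alpha x}R(x)\to0$ as $x\to+\infty$. On each $J_k=\,]\tau_k,\tau_{k+1}[$, bounded by two consecutive extrema of $x\mapsto\cos(\pi/3+\alpha x)$, this cosine is strictly monotone with range $[-1,1]$; hence as soon as $\|E\|_{L^\infty(J_k)}<1$, the equation $\cos(\pi/3+\alpha x)=-E(x)$ has exactly one solution $\rho_k$ in $J_k$, which is a zero of $b$, and conversely every zero of $b$ on $(0,+\infty)$ lies in some $J_k$. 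If $\theta_{k+1}$ denotes the zero of the cosine inside $J_k$, then $|\cos(\pi/3+\alpha\rho_k)|=|\sin(\alpha(\rho_k-\theta_{k+1}))|$ and $\cos(\pi/3+\alpha\rho_k)=-E(\rho_k)$ give $|\rho_k-\theta_{k+1}|\le\alpha^{-1}\arcsin|E(\rho_k)|$; substituting an explicit bound for $|E(\rho_k)|$—obtained by estimating $\widehat K$ on $\R+ic$ and using $\rho_k\asymp k/\alpha$—produces \eqref{eq::asymptotic of zeroes of b}.

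The main obstacle is the contour shift: $\widehat{\omega_1}$, hence $\widehat K$, is entire of order two and grows in the cone around the imaginary axis, so one must check carefully both that $\widehat K$ has no pole with $0<\operatorname{Im}z<c$ other than $z_{\pm}$ and that $\R+ic$ carries an integrable, exponentially small tail. The second, quantitative, difficulty is that \eqref{eq::asymptotic of zeroes of b} is asserted for every $k\ge 1$, not merely asymptotically, so one needs the remainder estimate with explicit constants, which also guarantees that the finitely many low-index zeros sit in the asserted intervals; I expect this bookkeeping to be what Theorem~\ref{thm::main 2} and Proposition~\ref{prop::zeroes of K} carry out.
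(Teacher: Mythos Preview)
Your route is close to the paper's in spirit—both extract the leading term by residues of $\widehat K$—but the execution differs. The paper first establishes a \emph{full} residue series for $K$ (Theorem~\ref{thm::main 2}) over semicircular contours, then integrates it term by term to obtain a series for $b$; the explicit bounds needed for \eqref{eq::asymptotic of zeroes of b} come from summing that series as in Lemma~\ref{Lem:estim0}. You instead shift the contour for $b$ directly to $\R+ic$, collect only the two nearest poles $z_\pm$, and keep the rest as an integral remainder $R(x)$. Your version is cleaner for the asymptotic \eqref{eq::sol heaviside} (and in fact yields an exponentially small, not merely $O(1/x)$, error), while the paper's series is better suited to producing the explicit constants in \eqref{eq::asymptotic of zeroes of b}. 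Your residue and pole computations are correct, and the vanishing of the vertical sides (where $\operatorname{Re}(z^2)\to+\infty$) is the right observation.

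There is, however, a genuine gap in your uniqueness argument for the zeros. The assertion ``$\cos(\pi/3+\alpha x)$ strictly monotone on $J_k$ with range $[-1,1]$ and $\|E\|_{L^\infty(J_k)}<1$ $\Rightarrow$ exactly one solution of $\cos(\pi/3+\alpha x)=-E(x)$'' is false without control on $E'$: a small-amplitude but rapidly oscillating $E$ can create several sign changes of $\cos+E$ on $J_k$. The paper handles this via a Rolle-type argument (Proposition~\ref{prop::zeroes of K}): at \emph{any} zero $\tilde z\in J_k$ one shows, using an explicit bound on the derivative analogous to \eqref{eq:reste1}, that $b'(\tilde z)$ has a sign depending only on the parity of $k$; two zeros in $J_k$ would then force, by Rolle, a third zero with the wrong derivative sign. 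To close your argument you need an analogous bound on $E'$ (e.g.\ by differentiating the shifted-contour representation of $R$), not just on $E$. This same derivative control is also what makes the bound \eqref{eq::asymptotic of zeroes of b} valid for \emph{every} $k\ge 1$, which you correctly flag as the delicate point but do not carry out.
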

\begin{proof}
	If  $I(x) = H(-x)$, $x\in\R$, is the input in Equation~\eqref{eq::SS dim 1}, then by Lemma~\ref{lem::important} and Theorem~\ref{thm::main 2}, the solution $b\in L^\infty(\R)$ of \eqref{eq::SS dim 1} is given for all $x>0$ by
	\begin{eqnarray}\label{eq::inter_1}
		\frac{b(x)}{2\sqrt{\pi}} &=&\int_{x}^{+\infty}e^{-\pi y\sqrt{\frac{2\pi}{3}}}\cos\left(\frac{\pi}{12}+\pi y\sqrt{\frac{2\pi}{3}}\right)dy\nonumber\\
		&&+ \int_{x}^{+\infty}\sum\limits_{k=1}^{+\infty}\frac{e^{-\pi c_ky\sqrt{\frac{2\pi}{3}}}}{c_k}\cos\left(\frac{\pi}{12}+\pi c_ky\sqrt{\frac{2\pi}{3}}\right)dy\nonumber\\
		&&+\int_{x}^{+\infty}\sum\limits_{k=1}^{+\infty}\frac{e^{-\pi d_ky\sqrt{\frac{2\pi}{3}}}}{d_k}\sin\left(\frac{\pi}{12}-\pi d_ky\sqrt{\frac{2\pi}{3}}\right)dy,
	\end{eqnarray}
	where the sequences $(c_k)_k$ and $(d_k)_k$ are defined in \eqref{eq:: c_k and d_k}. Since these two sequences are positives and tend to $+\infty$ as $k\to+\infty$, we can commute the integrals and the sums in the r.h.s of \eqref{eq::inter_1} for all $x>0$. One finds,
	\begin{eqnarray}
		b(x) &=& \frac{\sqrt{3}}{\pi}\cos\left(\frac{\pi}{3}+\pi x\sqrt{\frac{2\pi}{3}}\right)e^{-\pi x\sqrt{\frac{2\pi}{3}}}+\frac{\sqrt{3}}{\pi}\sum\limits_{k=1}^{+\infty}\frac{e^{-\pi c_k x\sqrt{\frac{2\pi}{3}}}}{c_k^2}\cos\left(\frac{\pi}{3}+\pi c_k x\sqrt{\frac{2\pi}{3}}\right)\nonumber\\
		&&-\frac{\sqrt{3}}{\pi}\sum\limits_{k=1}^{+\infty}\frac{e^{-\pi d_k x\sqrt{\frac{2\pi}{3}}}}{d_k^2}\cos\left(\frac{\pi}{3}+\pi d_k x\sqrt{\frac{2\pi}{3}}\right),
	\end{eqnarray}
	and \eqref{eq::sol heaviside} immediately follows. Finally, to prove \eqref{eq::asymptotic of zeroes of b}, it suffices to repeat step by step the proof of Lemma~\ref{Lem:estim0} and Proposition~\ref{prop::zeroes of K} given in Section~\ref{ss::complement MacKay linear regime}.
\end{proof}

The Proposition~\ref{pro::ss for Heaviside} implies that if the sensory input is the V1 representation of the ``MacKay rays'' as defined by \eqref{eq::input for MacKay rays}, then the associated stationary state corresponds to the V1 representation of the afterimage reported by MacKay \cite{mackay1957}. Moreover, Theorem~\ref{thm::existence of stationary input} ensures that the average membrane potential $a(x,t)$ of neurons in V1 located at $x\in\R^2$ at time $t\ge 0$ exponentially stabilises on the stationary state when $t\to\infty$. It follows that Equation~\eqref{eq::NF-intro} theoretically replicates the MacKay effect associated with the ``MacKay rays'' at the cortical level. Due to the retino-cortical map between the visual field and V1, we deduce the theoretical description of the MacKay effect for the ``MacKay rays'' at the retinal level.
\begin{remark}\label{rmk::comments on the MacKay target modelling}
	We emphasise that a linear combination of the Heaviside step function in the $x_2$-variable as a perturbation of the V1 representation of the tunnel pattern (called ``MacKay target'') gives rise to the MacKay effect description related to this pattern.
	
	{ While in the funnel pattern used in MacKay experiences, the difference between the center and the rest of the image is more evident, one must notice that the MacKay target stimulus (Figure~\ref{fig::MacKay target}, the right panel) is not perfectly radial. Indeed, symmetry-breaking imperfections are present. We chose to mimic these imperfections by shaping our control as concentrated on rays converging to the origin, which we chose to be symmetric for mathematical convenience (this has no bearing on the results). This control captures that a symmetry-breaking in the input is necessary to induce the observed afterimage and is sufficiently simple to be mathematically tractable.  We stress that the effect of adding such rays is barely noticeable (see, e.g., the left panel in Figure~\ref{fig::mackay-target-nonlinear}, where the two rays are along the line passing horizontally through the center of the image). }
\end{remark}

\subsection{The visual MacKay effect with a nonlinear response function}\label{ss::theoretical MacKay effect nonlinear}

This section aims to show that Equation~\eqref{eq::NF-intro} with a nonlinear response function $f$ still replicates the MacKay effect associated with the ``MacKay rays'' and the ``MacKay target'', see Figure~ \ref{fig::MacKay target}.

Remark~\ref{rmk::gap between zeroes} and Corollary~\ref{cor::no MacKay effect in the nonlinear regime} shows that, for our model of cortical activity in V1 modelled by Equation~\eqref{eq::NF-intro}, one cannot replicate the MacKay effect even with a nonlinear response function (having standard properties in most neural fields model, namely, a sigmoid) without breaking the Euclidean plane symmetry of the sensory input when chosen equal to $P_F$ or $P_T$. In the following, in order to see why a response function with sigmoid shape replicates the MacKay effect, we assume the following hypothesis.
\begin{hypothesis}\label{hyp::on the response function}
	The response function $f$ satisfies: $f\in C^2(\R)$, $f$ is odd and $f(s)=s$ for all $|s|\le 1$. We also assume that $\max_{s\in\R} f'(s)=1$.
\end{hypothesis}

Let us model the cortical representation of the ``MacKay rays'' input by the following
\begin{equation}\label{eq::MacKay rays we control}
	I(x) = \gamma P_F(x)+\varepsilon H(-x_1),\qquad x:=(x_1,x_2)\in\R^2,
\end{equation}
where  $\gamma\ge0$ is a control parameter, $\varepsilon>0$ and $P_F(x) = \cos(2\pi\langle\xi_0,x\rangle)$ is an analytical representation of the funnel pattern in cortical coordinates, where $\xi_0=(0,\lambda)$ with $\lambda>0$.

The first result of this section is then the following
\begin{proposition}\label{pro::1-nonlinear MacKay}
	Let the input $I$ be defined by \eqref{eq::MacKay rays we control} with $\varepsilon>0$ small and $\gamma\le 1-\mu\widehat{\omega}(\xi_0)$. Under the assumption, $\mu<\mu_0$, equation \eqref{eq::NF-intro} with a response function satisfying Hypothesis~\ref{hyp::on the response function}  replicates the MacKay effect associated with the ``MacKay rays''.
\end{proposition}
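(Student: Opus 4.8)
The plan is to reduce the nonlinear statement to the linear analysis of Section~\ref{ss::theoretical MacKay effect}. Under Hypothesis~\ref{hyp::on the response function} the map $f$ coincides with the identity on $[-1,1]$, so the idea is: if the stationary state $a_I:=\Psi(I)$ associated with the input $I$ of \eqref{eq::MacKay rays we control} satisfies $\|a_I\|_\infty\le1$, then $f(a_I)=a_I$, hence $a_I$ solves the \emph{linear} stationary equation $a_I=I+\mu\,\omega\ast a_I$. By uniqueness of the stationary state (Theorem~\ref{thm::existence of stationary input}, valid since $\mu<\mu_0$), and by linearity and superposition, $a_I$ is then exactly the stationary state obtained in Theorem~\ref{thm::stationary state for MacKay rays} with the funnel part carrying the coefficient $\gamma$, namely
\[
a_I(x_1,x_2)=\frac{\gamma\cos(2\pi\lambda x_2)}{1-\mu\widehat\omega(\xi_0)}+\varepsilon\,g(x_1),
\]
with $g$ possessing a discrete, countable set of zeroes on $(0,+\infty)$. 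This is the cortical afterimage reported by MacKay: a funnel pattern proportional to $P_F$ with the radial pattern $\varepsilon g(x_1)$ superimposed, the latter producing, through the retino-cortical map $x_1=\log r$, a countable family of concentric circles $r=e^{\rho_k}$; Theorem~\ref{thm::existence of stationary input} then gives the exponential stabilization $a(\cdot,t)\to a_I$, so Equation~\eqref{eq::NF-intro} replicates the MacKay effect for the ``MacKay rays''.

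It remains to obtain the sup-norm bound. I would first observe that $\mu<\mu_0\le\mu_c=(\max_\xi\widehat\omega(\xi))^{-1}$ (using $f'(0)=1$) forces $1-\mu\widehat\omega(\xi_0)>0$, so the ``pure funnel'' function $\phi(x):=\gamma\cos(2\pi\langle\xi_0,x\rangle)/(1-\mu\widehat\omega(\xi_0))$ is well defined and, by the hypothesis $\gamma\le1-\mu\widehat\omega(\xi_0)$, satisfies $\|\phi\|_\infty=\gamma/(1-\mu\widehat\omega(\xi_0))\le1$. Since $|\phi|\le1$ gives $f(\phi)=\phi$, and since the eigenfunction identity $\omega\ast\cos(2\pi\langle\xi_0,\cdot\rangle)=\widehat\omega(\xi_0)\cos(2\pi\langle\xi_0,\cdot\rangle)$ shows $\phi=\gamma P_F+\mu\,\omega\ast f(\phi)$, uniqueness gives $\phi=\Psi(\gamma P_F)$. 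As $\|I-\gamma P_F\|_\infty=\varepsilon$, the Lipschitz estimate for $\Psi$ then yields $\|a_I-\phi\|_\infty\le\tfrac{\mu_0}{\mu_0-\mu}\,\varepsilon$. When $\gamma<1-\mu\widehat\omega(\xi_0)$ one has $\|\phi\|_\infty<1$, so for $\varepsilon$ small enough $\|a_I\|_\infty\le1$ and the reduction of the first paragraph applies verbatim.

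The step I expect to be the main obstacle is the borderline case $\gamma=1-\mu\widehat\omega(\xi_0)$, in which $\|\phi\|_\infty=1$ and $a_I$ may exceed $1$ in modulus on a small set. I would handle this by localizing where $f$ actually differs from the identity: since $|\phi|=1$ only on the isolated lines $\{x_2=k/(2\lambda)\}$, the set $S$ where $|a_I|>1$ lies in an $O(\varepsilon)$-neighbourhood of those lines, and there $0\le|a_I-f(a_I)|\le\|a_I-\phi\|_\infty=O(\varepsilon)$ (using that $f$ is $1$-Lipschitz, odd, with $f(\pm1)=\pm1$), while $a_I-f(a_I)=0$ off $S$. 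Writing $a_I=I+\mu\,\omega\ast a_I-\mu\,\omega\ast(a_I-f(a_I))$, the Schwartz decay of $\omega$ gives $\sup_x\int_S|\omega(x-y)|\,dy=O(\varepsilon)$ and hence $\|\omega\ast(a_I-f(a_I))\|_\infty=O(\varepsilon^2)$; inverting $\idty-\mu\,\omega\ast$ in $L^\infty$ (bounded since $\mu<\mu_0$) then shows that $a_I$ coincides with the linear stationary state up to an $O(\varepsilon^2)$ term. As the zeroes of the ring component $\varepsilon g$ are non-degenerate and of amplitude $\varepsilon$, this lower-order correction does not affect the relevant zero-level-set structure, and the conclusion of the first paragraph persists. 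Outside this borderline regime, the whole argument is an immediate consequence of Section~\ref{ss::theoretical MacKay effect}.
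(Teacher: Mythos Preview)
Your argument is correct in spirit and takes a genuinely different route from the paper. The paper differentiates the input-output map: it writes $\Psi(\gamma P_F+\varepsilon v)=\Psi(\gamma P_F)+\varepsilon\,D\Psi(\gamma P_F)v+o(\varepsilon)$ via Theorem~\ref{thm::derivative of Psi}, observes that $|\Psi(\gamma P_F)|\le 1$ forces $f'(\Psi(\gamma P_F))\equiv 1$, and hence that the directional derivative solves the \emph{linear} resolvent equation, to which Proposition~\ref{pro::ss for Heaviside} applies. You instead bound $\|a_I\|_\infty$ directly so that $f(a_I)=a_I$ and $a_I$ coincides \emph{exactly} with the linear stationary state; this avoids the $C^1$-regularity machinery of Appendix~\ref{ss::complements for nonlinear Mackay effect} and, for $\gamma$ strictly below the threshold, yields a cleaner and sharper conclusion than the paper's first-order expansion. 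The trade-off is that the paper's approach treats the borderline $\gamma=1-\mu\widehat{\omega}(\xi_0)$ uniformly (only $f'(\Psi(\gamma P_F))=1$ is needed, and this holds on the closed range $|\Psi(\gamma P_F)|\le 1$), whereas you must supply a separate perturbation argument there.

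One small correction in your borderline analysis: since $|\phi|=|\cos(2\pi\lambda x_2)|$ is quadratically tangent to $1$ at its maxima, the set $\{|\phi|>1-C\varepsilon\}$ has width $O(\sqrt{\varepsilon})$ in $x_2$, not $O(\varepsilon)$. Thus $\sup_x\int_S|\omega(x-y)|\,dy=O(\sqrt{\varepsilon})$ and the corrective term is $O(\varepsilon^{3/2})$, not $O(\varepsilon^2)$. This is still $o(\varepsilon)$, so your conclusion about the zero-level-set structure of the $\varepsilon g(x_1)$ component survives unchanged.
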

\begin{proof}
	On one hand, the stationary solution associated with $I(x) = \gamma P_F(x)+\varepsilon v(x_1,x_2)$, where $v(x_1,x_2) = H(-x_1)$ satisfies \eqref{eq::map Psi} in $L^\infty(\R^2)$, i.e.,
	\begin{equation}\label{eq::int 1}
		\Psi(\gamma P_F+\varepsilon v) = \gamma P_F+\varepsilon v+\mu\omega\ast f(\Psi(\gamma P_F+\varepsilon v)).
	\end{equation}
	On the other hand, since $\|P_F\|_\infty = 1 = \|v\|_\infty$, $0<\gamma\le 1-\mu\widehat{\omega}(\xi_0)\le 1$ and $\varepsilon\ll 1$, we can apply Theorem~\ref{thm::derivative of Psi} and obtain
	\begin{equation}\label{eq::int 2}
		\Psi(\gamma P_F+\varepsilon v) = \Psi(\gamma P_F)+\varepsilon D\Psi(\gamma P_F)v+o(\varepsilon),
	\end{equation}
	where $D\Psi(\gamma P_F)v$ is the differential of $\Psi$ at $\gamma P_F$ in the direction of $v$. It also follows from Theorems~\ref{thm::existence of stationary input} and \ref{thm::main theorem 3} that for some $g_1\ge 0$, it holds   $\|\Psi(\gamma P_F)\|_\infty\le g_1 = \gamma\|P_F\|_\infty+(\mu/\mu_0)f(g_1)<3/2$. Thus, injecting \eqref{eq::int 2} into \eqref{eq::int 1} and Taylor expansion of $f$ in the first order leads to
	\begin{equation}\label{eq::int 3}
		\Psi(\gamma P_F) = \gamma P_F+\mu\omega\ast f(\Psi(\gamma P_F)),\quad D\Psi(\gamma P_F)v = v+\mu\omega\ast[f'(\Psi(\gamma P_F))D\Psi(P_F)v.]
	\end{equation}
	Thanks to Hypothesis~\ref{hyp::on the response function} and the assumption $\gamma\le 1-\mu\widehat{\omega}(\xi_0)$, one has $\Psi(\gamma P_F) =\gamma P_F/(1-\mu\widehat{\omega}(\xi_0))$. Indeed, since  $|\gamma P_F/(1-\mu\widehat{\omega}(\xi_0)|\le 1$ and $\omega\ast P_F = \widehat{\omega}(\xi_0) P_F$, one has that
	\begin{equation}
		\gamma P_F+\mu\omega\ast f\left(\frac{\gamma P_F}{1-\mu\widehat{\omega}(\xi_0)}\right)=\gamma P_F+\frac{\mu\gamma \omega\ast P_F}{1-\mu\widehat{\omega}(\xi_0)}=\frac{\gamma P_F}{1-\mu\widehat{\omega}(\xi_0)}.
	\end{equation}
	Therefore, $\Psi(\gamma P_F)$ is also a funnel pattern when represented in term of binary image. Moreover, since $|\Psi(\gamma P_F)|\le 1$, one has $f'(\Psi(\gamma P_F))=1$, and $D\Psi(\gamma P_F)v = v+\mu\omega\ast D\Psi(P_F)v$ has a discrete and countable set of zeroes by Proposition~\ref{pro::ss for Heaviside}.  The result then follows at once.
\end{proof}
\begin{proposition}\label{pro::2-nonlinear MacKay}
	Let $v\in L^\infty(\R^2)$. Under the assumption $\mu<\mu_0$, the map $\Pi:\gamma\in\R_{\ge 0}\mapsto\Pi(\gamma) = u_\gamma\in L^\infty(\R^2)$, where $u_\gamma$ is the solution of $u_\gamma=v+\mu\omega\ast \left[f'(\Psi(\gamma P_F))u_\gamma\right]$ is Lipschitz continuous.
\end{proposition}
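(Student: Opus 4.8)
The plan is to treat $u_\gamma=\Pi(\gamma)$ as the unique fixed point of an affine contraction that depends on the parameter $\gamma$, to establish a $\gamma$-uniform a priori bound on $u_\gamma$, and then to obtain the Lipschitz estimate by subtracting the two defining identities for $\gamma_1,\gamma_2$ and absorbing one term on the left-hand side. The assumption $\mu<\mu_0$, equivalently $\mu\|\omega\|_1<1$, will be invoked twice. First I would check that $\Pi$ is well-defined. For fixed $\gamma\ge0$, the standing assumptions on $f$ (non-decreasing, $f(0)=0$, $f'(0)=\max_{s\in\R}f'(s)=1$) give $0\le f'\le1$ pointwise, so for every $u\in L^\infty(\R^2)$ one has $f'(\Psi(\gamma P_F))\,u\in L^\infty(\R^2)$ with $\|f'(\Psi(\gamma P_F))\,u\|_\infty\le\|u\|_\infty$. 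Hence, by the generalized Young inequality (Lemma~\ref{Lem::nonlinear operator Q}), the map $u\mapsto v+\mu\,\omega\ast[f'(\Psi(\gamma P_F))\,u]$ is a contraction of $L^\infty(\R^2)$ of ratio $\mu\|\omega\|_1=\mu/\mu_0<1$; the contraction mapping principle produces the unique solution $u_\gamma$, and summing the geometric series gives the $\gamma$-uniform bound
\[
\|u_\gamma\|_\infty\le\frac{\mu_0}{\mu_0-\mu}\,\|v\|_\infty .
\]

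Next I would derive the Lipschitz property. Writing $u_i:=u_{\gamma_i}$ and subtracting the two identities $u_i=v+\mu\,\omega\ast[f'(\Psi(\gamma_iP_F))\,u_i]$, after inserting the telescoping term one gets
\[
u_1-u_2=\mu\,\omega\ast\big[f'(\Psi(\gamma_1P_F))(u_1-u_2)\big]+\mu\,\omega\ast\big[\big(f'(\Psi(\gamma_1P_F))-f'(\Psi(\gamma_2P_F))\big)u_2\big].
\]
Taking $L^\infty$-norms with $\|\omega\|_1=\mu_0^{-1}$ and $0\le f'\le1$ yields
\[
\|u_1-u_2\|_\infty\le\frac{\mu}{\mu_0}\|u_1-u_2\|_\infty+\frac{\mu}{\mu_0}\big\|f'(\Psi(\gamma_1P_F))-f'(\Psi(\gamma_2P_F))\big\|_\infty\,\|u_2\|_\infty .
\]
Then, using that $f'$ is Lipschitz with constant $\|f''\|_\infty$ (since $f''$ is bounded), that $\Psi$ is Lipschitz on $L^\infty(\R^2)$ with constant $\mu_0/(\mu_0-\mu)$, and that $\|P_F\|_\infty=1$, I would bound the middle factor by $\|f''\|_\infty\,\frac{\mu_0}{\mu_0-\mu}\,|\gamma_1-\gamma_2|$; combining this with the a priori bound on $\|u_2\|_\infty$ and moving $\tfrac{\mu}{\mu_0}\|u_1-u_2\|_\infty$ to the left gives
\[
\|u_1-u_2\|_\infty\le\frac{\mu\,\mu_0^2\,\|f''\|_\infty\,\|v\|_\infty}{(\mu_0-\mu)^3}\,|\gamma_1-\gamma_2| ,
\]
that is, $\Pi$ is Lipschitz with an explicit constant depending only on $\mu$, $\|\omega\|_1$, $\|f''\|_\infty$ and $\|v\|_\infty$.

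I do not expect a genuine obstacle here; the only point requiring some care is that the statement refers to $u_\gamma$ as ``the solution'', so the well-posedness and the $\gamma$-uniform a priori bound of the first step must be established before the Lipschitz estimate — and it is precisely there that $\mu<\mu_0$ is used, both to make $\Psi$ well-defined and Lipschitz on $L^\infty(\R^2)$ and to make the parametrized affine map a contraction.
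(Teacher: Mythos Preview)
Your proposal is correct and follows essentially the same route as the paper's proof: an a~priori bound $\|u_\gamma\|_\infty\le\mu_0\|v\|_\infty/(\mu_0-\mu)$, then the telescoping difference $f'(\Psi(\gamma_1P_F))u_1-f'(\Psi(\gamma_2P_F))u_2$, the Lipschitz estimates on $f'$ and on $\Psi$, and absorption of the $\tfrac{\mu}{\mu_0}\|u_1-u_2\|_\infty$ term; the resulting Lipschitz constant $\mu\mu_0^2\|f''\|_\infty\|v\|_\infty/(\mu_0-\mu)^3$ agrees with the one obtained in the paper after moving the contraction term to the left. Your treatment is in fact slightly more explicit on the well-posedness step, which the paper only states.
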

\begin{proof}
	Let $v\in L^\infty(\R^2)$ be fixed and $\gamma\in\R_{\ge 0}$. If $u_\gamma\in L^\infty(\R^2)$ is the solution of $u_\gamma=v+\mu\omega\ast \left[f'(\Psi(\gamma P_F))u_\gamma\right]$, then, under the assumption $\mu<\mu_0$ and Hypothesis~\ref{hyp::on the response function}, one has $\|u_\gamma\|_\infty\le \|v\|_\infty\mu_0/(\mu-\mu_0)$. Let now $\gamma_1, \gamma_2\in\R_{\ge 0}$, then using Inequality~\eqref{eq::upper bound on Psi_I}, one finds
	\begin{eqnarray}
		\|\Pi(\gamma_1)-\Pi(\gamma_2)\|_\infty&\le&\mu\|\omega\|_1 \|f'(\Psi(\gamma_1 P_F))u_{\gamma_1}-f'(\Psi(\gamma_2 P_F))u_{\gamma_2}\|_\infty\nonumber\\
		&\le&\frac{\mu}{\mu_0}\|\Pi(\gamma_1)-\Pi(\gamma_2)\|_\infty+\frac{\mu\mu_0\|v\|_\infty f_\infty''}{(\mu_0-\mu)^2}|\gamma_1-\gamma_2|,
	\end{eqnarray}
	where $f_\infty''$ is the $L^\infty$-norm of the second derivative $f''$. The result then follows at once.
\end{proof}

Let us define the positive quantity
\begin{equation}\label{eq:: gamma 0}
	\gamma_0 := \sup\{\gamma\ge 0\mid \|\Psi(\gamma' P_F)\|_\infty\le 1, \mbox{for all } \gamma'\in[0,\gamma]\}.
\end{equation}
Observe that $\gamma_0$ is not necessary finite and that if $0\le\gamma\le\gamma_0$, then $f'(\Psi(\gamma P_F)) = 1$. It follows that if $\gamma_0 =+\infty$, then $\|\Psi(\gamma P_F)\|_\infty\le 1$ for all $\gamma\ge 0$ and therefore, under Assumption $\mu<\mu_0$, Equation~\eqref{eq::NF-intro} with a response function satisfying Hypothesis~\ref{hyp::on the response function} and with the input $I$ defined by \eqref{eq::MacKay rays we control} with $\varepsilon>0$ will always reproduce the MacKay effect associated with ``MacKay rays''  thanks to Proposition~\ref{pro::2-nonlinear MacKay}.

In the case where $\gamma_0$ is finite, one has the following.
\begin{theorem}\label{thm::nonlinear MacKay}
	Let $L>0$.	If $\gamma_0$ defined by \eqref{eq:: gamma 0} is finite, there exists $\delta>0$ such that the stationary solution to Equation~\eqref{eq::NF-intro} with a response function satisfying Hypothesis~\ref{hyp::on the response function} and with the input $I$ defined by \eqref{eq::MacKay rays we control} with $\varepsilon>0$ small and $|\gamma-\gamma_0|\le\delta$ has the same zeroes structure as in the linear case in $[0,L]\times\R$, under Assumption $\mu<\mu_0$. In particular, it replicates the MacKay effect associated with the ``MacKay rays''.
\end{theorem}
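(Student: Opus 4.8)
Write $v(x_1,x_2)=H(-x_1)$, $\phi_\gamma:=\Psi(\gamma P_F)$ and $a_\gamma^\varepsilon:=\Psi(\gamma P_F+\varepsilon v)$. The plan is a double perturbation: first around the threshold $\gamma=\gamma_0$, where $f$ still acts as the identity on the whole range of $\phi_{\gamma_0}$, and then around $\varepsilon=0$; this reduces the zero structure to the linear computation of Proposition~\ref{pro::ss for Heaviside}. At the base point, by definition of $\gamma_0$ and Lipschitz continuity of $\gamma\mapsto\phi_\gamma$ (from \eqref{eq::upper bound on Psi_I}), one has $\|\phi_{\gamma_0}\|_\infty\le1$; since $f(s)=s$ for $|s|\le1$ by Hypothesis~\ref{hyp::on the response function}, $\phi_{\gamma_0}$ solves the \emph{linear} stationary equation and hence equals $\gamma_0 P_F/(1-\mu\widehat\omega(\xi_0))$, whose sup-norm must be exactly $1$ by maximality of $\gamma_0$; thus $\phi_{\gamma_0}=P_F$ and $f'(\phi_{\gamma_0})\equiv1$. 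Consequently $D\Psi(\gamma_0 P_F)v=:u_{\gamma_0}$ solves $u_{\gamma_0}=v+\mu\omega\ast u_{\gamma_0}$, i.e.\ Equation~\eqref{eq::SS dim 1} with Heaviside input, so by Lemma~\ref{lem::important} and Proposition~\ref{pro::ss for Heaviside} $u_{\gamma_0}(x_1,x_2)=b(x_1)$ with $b$ admitting a discrete set of zeros $(\rho_k)$ in $(0,+\infty)$; only finitely many lie in $[0,L]$, and differentiating the asymptotics~\eqref{eq::sol heaviside} (together with the separation $\rho_k\in\,]\tau_k,\tau_{k+1}[$) shows each is transversal, with $|b'(\rho_k)|$ bounded below on $[0,L]$.

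For $|\gamma-\gamma_0|\le\delta$ I would propagate this picture. By the reasoning of Theorem~\ref{thm::characterization of properties on a_F} (valid for the input $\gamma P_F$ since $\omega$ is homogeneous and isotropic), $\phi_\gamma$ depends only on $x_2$, is smooth, even and $1/\lambda$-periodic. Lipschitz continuity of $\gamma\mapsto\phi_\gamma$ and of $\gamma\mapsto\phi_\gamma'$ (the latter obtained by differentiating the one-dimensional fixed-point identity $\phi_\gamma=\gamma P_F+\mu\omega_1\ast f(\phi_\gamma)$ in $x_2$ and using that $f'$ is Lipschitz) then show $\phi_\gamma$ is $C^1$-close on a period to $\cos(2\pi\lambda x_2)$, whose zeros are transversal; hence $\phi_\gamma$ keeps simple zeros $\zeta_k(\gamma)$ near $\tfrac{2k+1}{4\lambda}$ with $|\phi_\gamma'|$ bounded below nearby. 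Moreover $f'(\phi_\gamma)=1$ outside the thin set $\{|\phi_\gamma|>1\}$, where $|f'(\phi_\gamma)-1|\le C\delta$ since $f'(\pm1)=1$ and $f''$ is bounded; the perturbed identity $u_\gamma=v+\mu\omega\ast[f'(\phi_\gamma)u_\gamma]$ for $u_\gamma:=D\Psi(\gamma P_F)v$ then yields $\|u_\gamma-b\|_\infty\le C'\delta$ (this is exactly the Lipschitz continuity of $\Pi$ in Proposition~\ref{pro::2-nonlinear MacKay}), and the analogous bound for $\partial_{x_1}u_\gamma$. Thus $u_\gamma$ is $C^1$-close to $b(x_1)$ on $[0,L]\times\R$, and for $\delta$ small compared with $\min_k|b'(\rho_k)|$ the map $x_1\mapsto u_\gamma(x_1,\zeta_k(\gamma))$ has, on $[0,L]$, the same number of transversal zeros as $b$, located near the $\rho_k$.

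It remains to switch on $\varepsilon$. By Theorem~\ref{thm::derivative of Psi}, $a_\gamma^\varepsilon=\phi_\gamma+\varepsilon u_\gamma+o(\varepsilon)$ uniformly for $\gamma\in[\gamma_0-\delta,\gamma_0+\delta]$ (uniform continuity of $D\Psi$ on a bounded ball), and likewise for the first-order derivatives. For $\varepsilon$ small, $a_\gamma^\varepsilon$ does not vanish off the strips $\{x_2\approx\zeta_k(\gamma)\}$, where $|\phi_\gamma|\ge c>0$; on each strip $\partial_{x_2}a_\gamma^\varepsilon=\phi_\gamma'+O(\varepsilon)\neq0$, so the implicit function theorem represents the zero set of $a_\gamma^\varepsilon$ in $[0,L]\times\R$ as a disjoint union of graphs $x_2=h_k^{\gamma,\varepsilon}(x_1)$ with $h_k^{\gamma,\varepsilon}(x_1)-\zeta_k(\gamma)=-\varepsilon u_\gamma(x_1,\zeta_k(\gamma))/\phi_\gamma'(\zeta_k(\gamma))+o(\varepsilon)$; hence each graph crosses the level $\zeta_k(\gamma)$ exactly at the transversal zeros of $u_\gamma(\cdot,\zeta_k(\gamma))$, near the $\rho_k$. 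This is the zero structure of the linear stationary state $\cos(2\pi\lambda x_2)+\varepsilon b(x_1)$ of Theorem~\ref{thm::stationary state for MacKay rays} restricted to $[0,L]\times\R$, and transporting it by the inverse retino-cortical map gives the fan pattern overlaid with spatially modulated concentric contours, i.e.\ the MacKay effect; one fixes $\delta$ first and then $\varepsilon_0=\varepsilon_0(L,\delta)$.

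The main difficulty is the passage from $\gamma\le\gamma_0$, where the problem is effectively linear, to $\gamma>\gamma_0$: one must guarantee that activating $f$ on the thin set $\{|\phi_\gamma|>1\}$ destroys neither the discreteness nor the transversality of the zeros of $\phi_\gamma$ and of $D\Psi(\gamma P_F)v$. This is possible only because everything is localized to the \emph{bounded} strip $[0,L]\times\R$: there the relevant zeros are finite in number and uniformly transversal in the linear case, and the maps $\gamma\mapsto\phi_\gamma$ and $\gamma\mapsto D\Psi(\gamma P_F)v$ are Lipschitz (the second using $f'\in C^{0,1}$), so a displacement $|\gamma-\gamma_0|\le\delta$ and an $o(\varepsilon)$ perturbation can be absorbed; on an unbounded strip the exponential decay of $b$ would force $\varepsilon$ and $\delta$ to be chosen in an $x_1$-dependent way.
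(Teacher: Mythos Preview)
Your proposal follows essentially the same route as the paper: expand $\Psi(\gamma P_F+\varepsilon v)=\Psi(\gamma P_F)+\varepsilon u_\gamma+o(\varepsilon)$, observe that at $\gamma=\gamma_0$ the problem is effectively linear so $u_{\gamma_0}$ coincides with the function $b$ of Proposition~\ref{pro::ss for Heaviside}, and then invoke the Lipschitz continuity of $\gamma\mapsto u_\gamma$ from Proposition~\ref{pro::2-nonlinear MacKay} to transfer the zero structure on the bounded strip $[0,L]\times\R$. The paper's own proof is in fact considerably terser than yours: it records the $\varepsilon$-expansion, cites Theorems~\ref{thm::stationary input in linear regime} and~\ref{thm::characterization of properties on a_F} for the zeroes of $\Psi(\gamma P_F)$ in $x_2$, cites Proposition~\ref{pro::2-nonlinear MacKay} for $\|u_\gamma-u_{\gamma_0}\|_\infty\le\eta$, and then concludes in one sentence that the zeroes of $u_\gamma$ ``cannot accumulate'' on a finite interval and are ``distributed similarly''.

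What you add beyond the paper is the mechanism that makes that last sentence rigorous: you supply the transversality of the zeroes of $b$ and of $\phi_{\gamma_0}=P_F$, upgrade the $L^\infty$ closeness to $C^1$ closeness (by differentiating the one-dimensional fixed-point identities), and then use the implicit function theorem to describe the zero set of $a_\gamma^\varepsilon$ as a union of graphs $x_2=h_k^{\gamma,\varepsilon}(x_1)$. This is a genuine improvement in rigor; in particular it bypasses the appeal to item~3 of Theorem~\ref{thm::characterization of properties on a_F}, which carries the extra hypothesis $\mu<\mu_0/2$, by arguing directly that $\phi_\gamma$ is $C^1$-close to $\cos(2\pi\lambda x_2)$. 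Your explicit identification $\gamma_0=1-\mu\widehat\omega(\xi_0)$ (hence $\phi_{\gamma_0}=P_F$) is also correct and slightly sharper than what the paper states. The only points you leave informal are the transversality of the $\rho_k$ for $b$ (this follows from the same computation as in Proposition~\ref{prop::zeroes of K} applied to~\eqref{eq::sol heaviside}) and the $C^1$ control on $u_\gamma$, both of which are routine to complete.
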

\begin{proof}
	Let $\varepsilon>0$ be small and $\gamma_0$ defined by \eqref{eq:: gamma 0} be finite. On one hand, by definition of $\gamma_0$ and Proposition~\ref{pro::1-nonlinear MacKay}, the stationary solution $a_I(x_1,x_2)$ to Equation~\eqref{eq::NF-intro} with a response function satisfying Hypothesis~\ref{hyp::on the response function} and with the input $I$ defined by \eqref{eq::MacKay rays we control} with $\gamma=\gamma_0$ has a discrete and countable zero-level set with respect to each of its variables $x_1>0$ and $x_2\in\R$. On the other hand, one has for all $\gamma\ge 0$, $\Psi(\gamma P_F+\varepsilon v) = \Psi(\gamma P_F)+\varepsilon u_\gamma+o(\varepsilon)$ where  $u_\gamma\in L^\infty(\R^2)$ is the solution of $u_\gamma=v+\mu\omega\ast \left[f'(\Psi(\gamma P_F))u_\gamma\right]$. We known from Theorems~\ref{thm::stationary input in linear regime} and \ref{thm::characterization of properties on a_F} that $\Psi(\gamma P_F)$ has a discrete set of zeroes with respect to $x_2$ as $P_F$, and from Proposition~\ref{pro::2-nonlinear MacKay} that for all $\eta>0$, there exists $\delta>0$ such that, if $|\gamma-\gamma_0|\le\delta$ it holds $\|u_{\gamma}-u_{\gamma_0}\|_\infty\le\eta$. Therefore, since $u_{\gamma_0}$ has a discrete set of zeroes with respect to $x_1>0$, then the zeroes of the function $u_{\gamma}$ cannot accumulate at any of those zeroes in a finite interval, that is, the zeroes of both functions are distributed similarly in $[0, L]\times\R$ for all finite $L>0$.
\end{proof}
\begin{remark}
	Although a sigmoid nonlinearity such as $f(s)=\tanh(s)$ or $f(s) = \erf(s\sqrt{\pi}/2)$ does not satisfy the assumption $f(s) = s$ for $|s|\le 1$, it is almost linear in a small interval of the form $(-\varepsilon,\varepsilon)$, $\varepsilon>0$ in such a way that Theorem~\ref{thm::nonlinear MacKay} should be a theoretical explanation of why Equation~\eqref{eq::NF-intro} with this nonlinearity replicates the MacKay effect.
\end{remark}

\subsection{Numerical results for the visual MacKay effect}\label{ss::numerical results for the MacKay effect}
The numerical implementation is performed with Julia, where we coded retino-cortical map for visualising each experiment. Moreover, given a sensory input $I$, the associated stationary output $a_I$ is numerically implemented via an iterative fixed-point method. Following the convention adopted in \cite{ermentrout1979, bressloff2001} for geometric visual hallucinations, we present binary versions of these images, where black corresponds to positive values and white to negative ones as explained in Section~\ref{ss:: Binary pattern}. The reader may refer to \cite[Appendix~B]{tamekue:tel-04230895} for a toolbox that performs numerical results presented here.

The cortical data is defined on a square $(x_1,x_2)\in[-L, L]^2$, $L=10$ with steps $\Delta x_1 = \Delta x_2 = 0.01$. For the reproduction of the MacKay effect, parameters in the kernel $\omega$ given by \eqref{eq::connectivity} are $\kappa=1$, $2\pi^2\sigma_1^2=1$, and $2\pi^2\sigma_2^2=2$. We also choose $\mu:=1$. We collected some representative results in Figures~\ref{fig::mackay-rays-linear}, \ref{fig::mackay-target-linear}, \ref{fig::mackay-rays-nonlinear} and \ref{fig::mackay-target-nonlinear}. Here, we visualize the retinal representation obtained from the cortical patterns via the inverse retino-cortical map. In Figure~\ref{fig::mackay-rays-linear}, we exhibit the MacKay effect associated with the ``MacKay rays''. In this case, the sensory input is chosen as $I(x) = \cos(5\pi x_2)+\varepsilon H(2-x_1)$, where $\varepsilon = 0.025$ and $H$ being the Heaviside step function. Similarly, we exhibit in Figure~\ref{fig::mackay-target-linear} the MacKay effect associated with the ``MacKay target''. In this case, the sensory input is $I(x) = \cos(5\pi x_1)+\varepsilon (H(-x_2-9.75)+H(x_2-9.75)+H(0.25-|x_2|))$, where $\varepsilon = 0.025$ and $H$ being the Heaviside step function. We use a linear response function ($f(s)=s$) for the two figures. However, the phenomenon can be reproduced with any sigmoid function. See for instance, Figure~\ref{fig::mackay-rays-nonlinear}  and Figure~\ref{fig::mackay-target-nonlinear}.
\begin{figure}
	\centering
	\includegraphics[width =.9\linewidth]{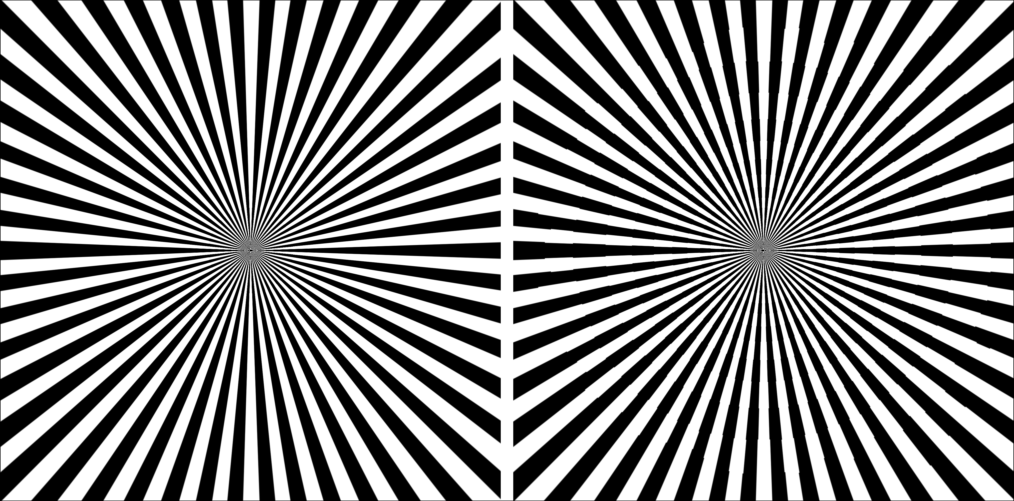}
	\caption{MacKay effect (\emph{right}) on the ``MacKay rays'' (\emph{left}). We use the linear response function $f(s)=s$. The sensory input is chosen as $I(x) = \cos(5\pi x_2)+\varepsilon H(2-x_1)$, $\varepsilon = 0.025$, where $H$ is the Heaviside step function.}
	\label{fig::mackay-rays-linear}
\end{figure}
\begin{remark}
	Although the Gaussian kernel is usually used in image processing and computer vision tasks due to its proximity to the visual system, it cannot replicate the MacKay effect if we use it as the kernel in Equation~\eqref{eq::NF-intro}. A physiological reason for this is that we used a one-layer model of NF equations. It is not then biologically realistic to model synaptic interactions with a Gaussian, which would model only excitatory-type interactions between neurons, see also Remark~\ref{rmk::about Gaussian kernel}. 
\end{remark}

\begin{figure}
	\centering
	\includegraphics[width = .9\linewidth]{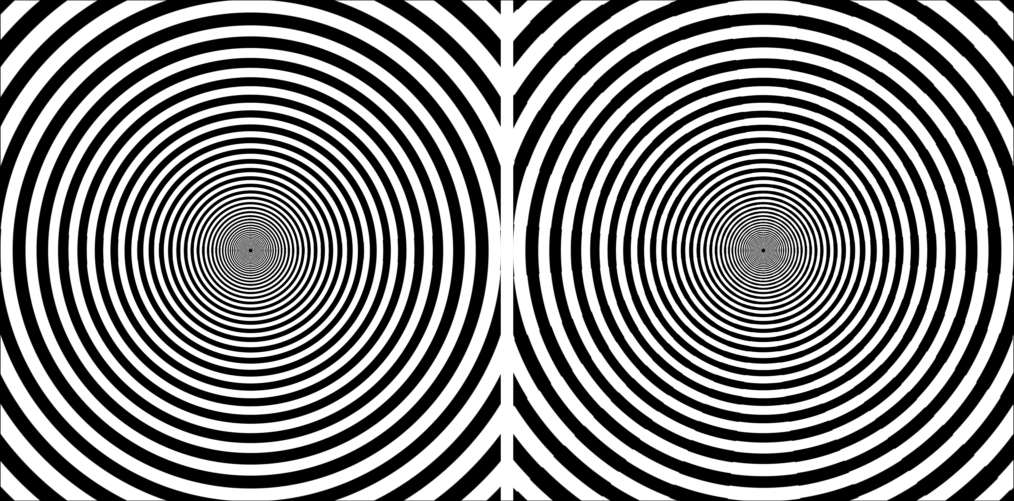}
	\caption{MacKay effect (\emph{right}) on the ``MacKay target'' (\emph{left}).We use the linear response function $f(s)=s$. The sensory input is $I(x) = \cos(5\pi x_1)+\varepsilon (H(-x_2-9.75)+H(x_2-9.75)+H(0.25-|x_2|))$, $\varepsilon = 0.025$, where $H$ is the Heaviside step function.}
	\label{fig::mackay-target-linear}
\end{figure}

\begin{figure}[ht!]
	\centering
	\includegraphics[width =.9\linewidth]{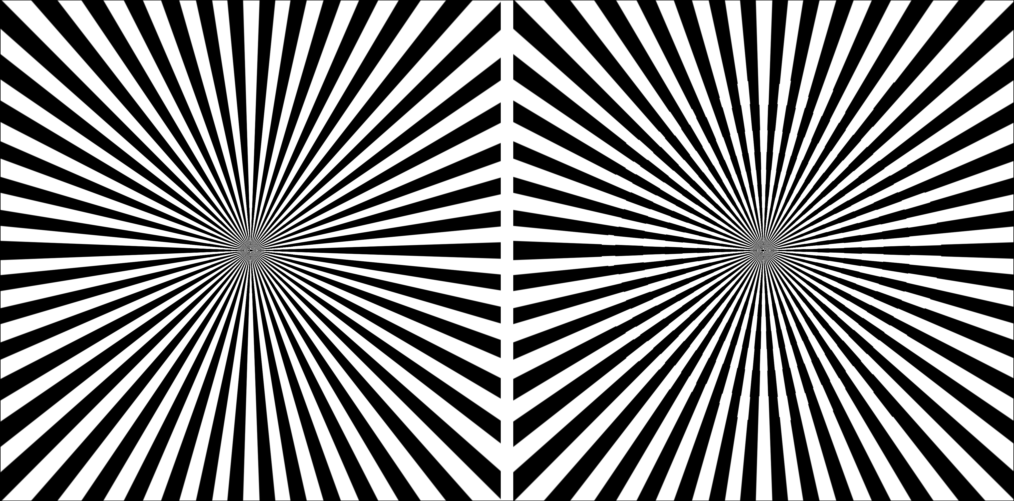}
	\caption{MacKay effect (\emph{right}) on the ``MacKay rays'' (\emph{left}). We use the nonlinear response function $f(s)=s/(1+|s|)$. The sensory input is chosen as $I(x) = \cos(5\pi x_2)+\varepsilon H(2-x_1)$, $\varepsilon = 0.025$, where $H$ is the Heaviside step function.}
	\label{fig::mackay-rays-nonlinear}
\end{figure}
\begin{figure}[ht!]
	\centering
	\includegraphics[width = .9\linewidth]{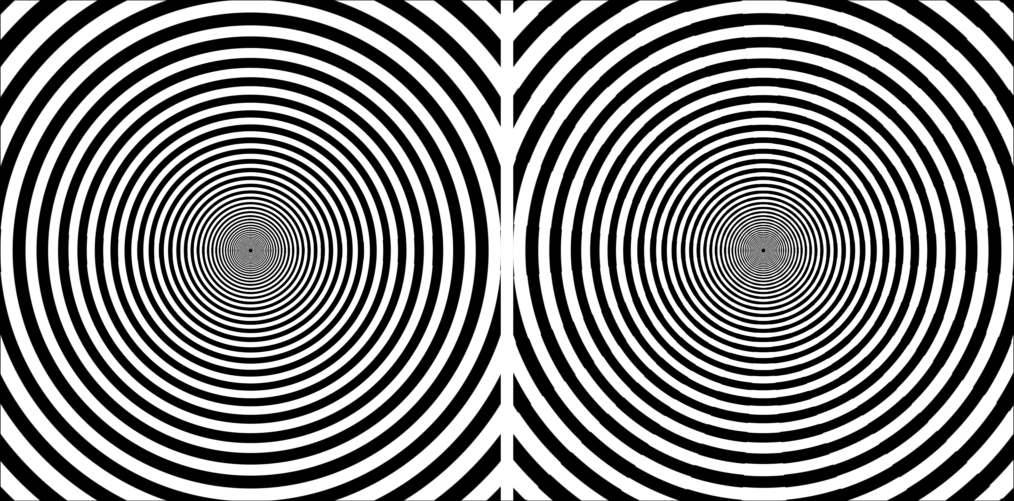}
	\caption{MacKay effect (\emph{right}) on the ``MacKay target'' (\emph{left}).We use the nonlinear response function $f(s)=s/(1+|s|)$. The sensory input is $I(x) = \cos(5\pi x_1)+\varepsilon (H(-x_2-9.75)+H(x_2-9.75)+H(0.25-|x_2|))$, $\varepsilon = 0.025$, where $H$ is the Heaviside step function.}
	\label{fig::mackay-target-nonlinear}
\end{figure}

						\section{Discussion} In this paper, we investigated the replication of visual illusions reported by MacKay \cite{mackay1957}, referred to as the visual MacKay effect. We have shown that these intriguing visual phenomena can be theoretically explained through a neural field model of Amari-type modelling the average membrane potential of V1 spiking neurons, which takes into account the sensory input from the retina. In our model equation, the sensory input stands to the V1 representation via the retino-cortical map of the visual stimulus employed in the MacKay experiment. 
						Assuming that the intra-neuron connectivity parameter is smaller than the threshold parameter where cortical patterns spontaneously emerge in V1 when sensory inputs from the retina do not drive its activity, we expounded a mathematical sound framework consisting of the input-output controllability of this equation. Then, performing a quantitative and qualitative study of the stationary output, we found that the MacKay effect is essentially a linear phenomenon, meaning the non-linear nature of the neural response does not play a role in its replication via our model equation. 
						
						Although our approach differs from that of Nicks \emph{et al.} \cite{nicks2021} in describing the MacKay-like effect (associated with regular sensory input), it agrees with the latter in emphasizing the role of inhibitory neurons in shaping the response of excitatory neurons to visual stimuli. This is consistent with the idea that inhibitory neurons play an important role in shaping the receptive fields of neurons in the visual cortex and that the interaction between excitatory and inhibitory neurons is crucial for visual processing. This new approach offers the advantage of accommodating any geometrical visual stimulus, particularly those localized in the visual field. We hope that this take on the question can serve as a foundation for future investigations, such as the theoretical replication of other psychophysical phenomena, including Billock and Tsou experiments \cite{billock2007}, the apparent motion in quartet stimulus \cite{giese1996}, the flickering wheel illusion \cite{Sokoliuk2013}, the spin in the enigma stimulus of Isia Léviant \cite{zeki1993,leviant1996}, or other psychophysical phenomena involving spontaneous cortical patterns such as the Barber pole, Café wall, Fraser spiral illusions \cite{fraser1908new,kitaoka2004contrast}, etc..
						
						
					\appendix
					\section{Equivariance of the input-output map with respect to the plane Euclidean group}\label{ss::action of E(2)}
					We discuss in this section the equivariance of the input to stationary output map $\Psi$ defined in \eqref{eq::map Psi} with respect to the plane Euclidean group.
					
					Let $\mathbf{E}(2)$ denote the Euclidean group, which is the symmetry group of $\R^2$. It is well known that (see, \cite[Chapter IV]{vilenkin1968} for instance) $\mathbf{E}(2)$ is the cross product of two-dimensional real line space $\R^2$ and $\mathcal{O}(2)$ the group of Euclidean rotations and reflections of this space, the so-called orthogonal group :
					$\mathbf{E}(2) = \R^2\rtimes\mathcal{O}(2)$.
					For any $g = (a,r)\in \mathbf{E}(2)$, one has $(a,r)\in\R^2\times\mathcal{O}(2)$ and the group property is the following
					$$\begin{cases}
						g_1\cdot g_2 = (a_1,r_1)\cdot(a_2,r_2) = (r_1a_2+a_1,r_1r_2),\cr
						g^{-1} = (-r^{-1}a,r^{-1}),\cr
						e = (0,\idty).
					\end{cases}$$
					Here, $g^{-1}$ is the inverse of $g = (a,r)\in \mathbf{E}(2)$, $e$ is the identity in $\mathbf{E}(2)$ and $\idty$ is the identity in $\mathcal{O}(2)$.
					
					\begin{definition}[Action of $\mathbf{E}(2)$ on $\R^2$]
						For $x\in\R^2$, the action of $g = (a,r)\in \mathbf{E}(2)$ on $\R^2$ is defined by
						$gx = rx+a$.
					\end{definition}
					\begin{definition}[Action of $\mathbf{E}(2)$ on $L^p(\R^2)$]
						We define the action of $\mathbf{E}(2)$ on $L^p(\R^2)$ by the representation $T:g\in \mathbf{E}(2)\longmapsto T_g\in \GL(L^p(\R^2))$
						such that, for all $v\in L^p(\R^2)$, it holds
						$$(T_gv)(x) = v(g^{-1}x), \qquad x\in\R^2.$$
						Here $\GL(L^p(\R^2))$ is the group of automorphism from $L^p(\R^2)$ to itself.
					\end{definition}
					
					We emphasise that the validity of the following proposition depends solely on the symmetry properties satisfied by the kernel $\omega$ rather than the nonlinear function $f$.  It remains valid whatever the shape (even linear, etc.) of the response function $f$.
					\begin{proposition}\label{pro::equivariance of the convolution operator}
						Let $\mu_0$ be defined by \eqref{eq::parameter mu_0}. If $\mu<\mu_0$, then, the map $\Psi$ defined in \eqref{eq::map Psi} and its inverse $\Psi^{-1}$ are $\mathbf{E}(2)$-equivariant, that is 
						\begin{equation}
							\Psi T_g = T_g \Psi
							\qquad\text{and}\qquad
							\Psi^{-1} T_g = T_g \Psi^{-1},
							\qquad \text{for any } g\in \mathbf{E}(2).
						\end{equation}
					\end{proposition}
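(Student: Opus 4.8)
The plan is to reduce the statement to two elementary equivariance facts — one for the linear convolution operator $v\mapsto\omega\ast v$, one for the pointwise (Nemytskii) operator $v\mapsto f(v)$ — and then to conclude by uniqueness of the fixed point defining $\Psi(I)$. First I would note that, since each $g=(a,r)\in\mathbf{E}(2)$ acts on $\R^2$ as a measure-preserving bijection, $T_g$ is a well-defined isometric automorphism of $L^p(\R^2)$ for every $1\le p\le\infty$, so all the operators below are well-defined.

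Next I would establish that the convolution operator commutes with $T_g$. For $v\in L^p(\R^2)$ and $x\in\R^2$ one writes $(\omega\ast T_gv)(x)=\int_{\R^2}\omega(x-y)\,v(g^{-1}y)\,dy$ and substitutes $y=gz$, whose Jacobian is $1$ because $g$ is an isometry, obtaining $\int_{\R^2}\omega(x-gz)\,v(z)\,dz$. The key algebraic identity is $x-gz=r\,(g^{-1}x-z)$ (which follows directly from $gz=rz+a$ and $g^{-1}x=r^{-1}(x-a)$); combined with the rotational invariance of $\omega$, namely $\omega(x)=\omega(|x|)$ and $r\in\mathcal{O}(2)$, this yields $\omega(x-gz)=\omega(g^{-1}x-z)$. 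Hence $(\omega\ast T_gv)(x)=(\omega\ast v)(g^{-1}x)=\big(T_g(\omega\ast v)\big)(x)$, i.e. $\omega\ast T_gv=T_g(\omega\ast v)$. The equivariance of $f$ is immediate since it acts pointwise: $f(T_gv)(x)=f\big(v(g^{-1}x)\big)=\big(T_gf(v)\big)(x)$, so $f(T_gv)=T_gf(v)$.

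With these two facts I would conclude as follows. Recall that, under the hypothesis $\mu<\mu_0$, $\Psi(I)$ is the unique element of $L^p(\R^2)$ fixed by the contraction $\Phi_I(u)=I+\mu\,\omega\ast f(u)$, and that $\Psi$ is a bijection of $L^p(\R^2)$ by the preceding Proposition. Fix $g\in\mathbf{E}(2)$ and $I\in L^p(\R^2)$; using linearity of $T_g$ together with the two equivariance identities,
\begin{equation*}
	\Phi_{T_gI}\big(T_g\Psi(I)\big)=T_gI+\mu\,\omega\ast f\big(T_g\Psi(I)\big)=T_gI+\mu\,T_g\big(\omega\ast f(\Psi(I))\big)=T_g\big(I+\mu\,\omega\ast f(\Psi(I))\big)=T_g\Psi(I),
\end{equation*}
so $T_g\Psi(I)$ is a fixed point of $\Phi_{T_gI}$; by uniqueness $\Psi(T_gI)=T_g\Psi(I)$, i.e. $\Psi T_g=T_g\Psi$. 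Composing this identity with $\Psi^{-1}$ on the left and on the right then gives $T_g\Psi^{-1}=\Psi^{-1}T_g$, which is the second assertion.

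The only genuinely non-formal step is the change-of-variables computation for $\omega\ast T_gv=T_g(\omega\ast v)$, and I expect the main obstacle there to be nothing more than careful bookkeeping: correctly using that $g$ is an isometry (unit Jacobian) and that the radial profile of $\omega$ makes it invariant under the orthogonal part $r$ of $g$. Everything else reduces to the uniqueness of the fixed point of $\Phi_I$ — which is exactly the point where $\mu<\mu_0$ enters — and, for the statement about $\Psi^{-1}$, to the bijectivity of $\Psi$ already proved above.
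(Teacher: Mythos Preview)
Your proof is correct and follows essentially the same route as the paper: both arguments reduce the equivariance of $\Psi$ to that of the nonlinear operator $v\mapsto\omega\ast f(v)$ via a change of variables exploiting the radial symmetry of $\omega$, and then invoke uniqueness of the fixed point of $\Phi_I$ under the assumption $\mu<\mu_0$. Your decomposition into the convolution and Nemytskii parts, and your derivation of $\Psi^{-1}T_g=T_g\Psi^{-1}$ by composing with $\Psi^{-1}$, are minor presentational variations on the paper's argument.
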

					
					\begin{remark}\label{rmk::equivariance of the convolution operator}
						As a consequence of Proposition~\ref{pro::equivariance of the convolution operator} we have that the sensory input $I$ and the stationary output $\Psi(I)$ have the same symmetry subgroups $\Gamma\subset\mathbf{E}(2)$. For example,  $I$ depends solely on the $x_1$ variable if and only if the same is true for $\Psi(I)$.
					\end{remark}
					
					\begin{proof}[Proof of Proposition~\ref{pro::equivariance of the convolution operator}]
						We start by claiming that $\cQ(v):=\omega\ast f(v)$ is an $\mathbf{E}(2)$-equivariant operator from $L^p(\R^2)$ to itself.
						The fact that $\cQ$ is well-defined is a consequence of Lemma~\ref{Lem::nonlinear operator Q}. We thus need to show that $T_g\cQ = \cQ T_g$, for any $g = (a,r)\in \mathbf{E}(2)$. 
						Let $v\in L^p(\R^2)$ and $x\in\R^2$.
						On one hand, one has
						\begin{equation}\label{eq::inter1}
							(T_g(\cQ(v)))(x)=\cQ(v)(g^{-1}x)
							= \int_{\R^2}\omega(|g^{-1}x-y|)f(v(y))dy.
						\end{equation}
						On the other hand, one has
						$$
						(\cQ(T_gv))(x) =\int_{\R^2}\omega(|x-y|)(f(T_gv))(y)dy=\int_{\R^2}\omega(|x-y|)f(v(r^{-1}(y-a)))dy.
						$$
						Setting $z = r^{-1}(y-a)$, then $dy = |\det r|dz = dz$, since $r\in\mathcal{O}(2)$ and
						$$|x-rz-a| =  |r(r^{-1}(x-a)-z)| = |g^{-1}x-z|.$$
						It follows that
						\begin{equation}\label{eq::inter2}
							(\cQ(T_gv))(x) = \int_{\R^2}\omega(|g^{-1}x-z|)f(v(z))dz,
						\end{equation}
						which completes the proof of the claim by identifying \eqref{eq::inter1} and \eqref{eq::inter2}.
						
						To complete the proof of the statement, we need to show that $T_g\Psi = \Psi T_g$ and $T_g\Psi^{-1} = \Psi^{-1}T_g$ for any $g\in\mathbf{E}(2)$. This is equivalent to prove that for all $I\in L^p(\R^2)$, $T_g\Psi(I) = \Psi(T_gI)$ and $T_g\Psi^{-1}(I) = \Psi^{-1}(T_gI)$. It follows from the previous claim that
						$$
						T_g\Psi(I) = T_gI+\mu T_g\cQ(\Psi(I)) =  T_gI+\mu\cQ(T_g\Psi(I)).
						$$
						On the other hand, one has
						$$
						\Psi(T_g I) = T_gI+\mu\cQ(\Psi(T_g I)).
						$$
						So, by the uniqueness of the stationary state provided by Theorem~\ref{thm::existence of stationary input}, we obtain $T_g\Psi(I)=\Psi (T_g I)$. Arguing similarly, we prove that $\Psi^{-1}$ is also $\mathbf{E}(2)$-equivariant.
					\end{proof}
					
					\section{Complement results}
					\subsection{Complement results for the MacKay effect replication in the linear regime}\label{ss::complement MacKay linear regime}
					This section contains various complements used in Section~\ref{ss::theoretical MacKay effect} to describe the MacKay effect when the response function in Equation~\eqref{eq::NF-intro} is linear.  The first result is the following.
					
					\begin{theorem}\label{thm::main 2}
						Under the considerations of Remark~\ref{rmk::particular considerations},
						the kernel $K$ defined in \eqref{eq:: K} can be recast for all $x\in\R^{*}$ as
						\begin{eqnarray}\label{eq:: kernel K}
							\frac{K(x)}{2\sqrt{\pi}} &=& e^{-\pi |x|\sqrt{\frac{2\pi}{3}}}\cos\left(\frac{\pi}{12}+\pi|x|\sqrt{\frac{2\pi}{3}}\right)+\sum\limits_{k=1}^{\infty}\frac{e^{-\pi c_k|x|\sqrt{\frac{2\pi}{3}}}}{c_k}\cos\left(\frac{\pi}{12}+\pi c_k|x|\sqrt{\frac{2\pi}{3}}\right)\nonumber\\
							&&+\sum\limits_{k=1}^{\infty}\frac{e^{-\pi d_k|x|\sqrt{\frac{2\pi}{3}}}}{d_k}\sin\left(\frac{\pi}{12}-\pi d_k|x|\sqrt{\frac{2\pi}{3}}\right),
						\end{eqnarray}
						where
						\begin{equation}\label{eq:: c_k and d_k}
							c_k = \sqrt{1+6k}\qquad k\in\N\qquad\mbox{and}\qquad d_k = \sqrt{-1+6k},\qquad k\in\N^{*}.
						\end{equation}
					\end{theorem}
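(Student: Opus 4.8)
The plan is to evaluate the inverse Fourier transform defining $K$ by residues, after a change of variables that makes the integral amenable to contour closure. Since $\widehat{\omega_1}$ (hence $\widehat K$) is even and real on $\R$, the kernel $K$ is even, so it suffices to treat $x>0$. Under the normalisation of Remark~\ref{rmk::particular considerations} one has $\widehat{\omega_1}(\xi)=e^{-\xi^2}-e^{-2\xi^2}$ and $\mu=1$, and the elementary factorisations $1-e^{-s}+e^{-2s}=(1+e^{-3s})/(1+e^{-s})$ and $e^{-s}-e^{-2s}=e^{-s}(1-e^{-2s})/(1+e^{-s})$ collapse $\widehat K$ to the clean form
\begin{equation*}
	\widehat K(\xi)=G(\xi^2),\qquad G(s):=\frac{e^{-s}-e^{-3s}}{1+e^{-3s}}.
\end{equation*}
Here $G$ is meromorphic on $\C$, vanishes at $s=0$, and satisfies $G(s)=O(e^{-\operatorname{Re}s})$ as $\operatorname{Re}s\to+\infty$ while $|G|$ stays bounded as $\operatorname{Re}s\to-\infty$; its poles are the zeros of $1+e^{-3s}$, i.e. $s=i\pi m/3$ with $m$ odd, and those with $3\mid m$ are removable since the numerator vanishes there as well. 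Thus the poles of $G$ are exactly $s_m=i\pi m/3$, $m\in\N$ with $m\equiv\pm1\pmod 6$, together with their complex conjugates.

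Writing $s=\xi^2$ (principal branch, cut along $(-\infty,0]$), one has
\begin{equation*}
	K(x)=2\int_0^{\infty}\cos(2\pi x\xi)\,G(\xi^2)\,d\xi=\frac12\int_0^{\infty}\Bigl(e^{2\pi i x\sqrt s}+e^{-2\pi i x\sqrt s}\Bigr)\frac{G(s)}{\sqrt s}\,ds.
\end{equation*}
I would close the first integrand along the boundary of the upper half $s$-disc of radius $R_N=2\pi N/3$ (such a circle misses every pole, and $1+e^{-3s}$ stays bounded away from $0$ on it) and the second along the lower half $s$-disc, the branch cut forcing the two contours to run along the two edges of $(-\infty,0]$. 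Two facts close the argument: (i) the arc at infinity contributes $0$, because near the positive real axis $G=O(e^{-\operatorname{Re}s})$ is super-exponentially small whereas, away from it, $|e^{\pm2\pi i x\sqrt s}|=e^{-2\pi x\operatorname{Im}\sqrt s}$ decays like $e^{-c\sqrt{R_N}}$ and beats the uniform bound $|G|\le C$ on the circle; and (ii) on the cut $\sqrt s=\pm i\sqrt{|s|}$ turns both $e^{\pm2\pi i x\sqrt s}$ into the \emph{same} real exponential $e^{-2\pi x\sqrt{|s|}}$ with opposite $1/\sqrt s$ prefactors, so the two edge-of-cut integrals produced by the upper and lower contours are negatives of one another and cancel when the two halves of $K(x)$ are added. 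What remains is
\begin{equation*}
	K(x)=-2\pi\,\operatorname{Im}\Biggl(\sum_{m}\operatorname{Res}_{s=s_m}\Bigl[e^{2\pi i x\sqrt s}\,\frac{G(s)}{\sqrt s}\Bigr]\Biggr),
\end{equation*}
the sum over the upper-half poles $s_m=i\pi m/3$, which converges absolutely for $x>0$ since the generic term is $O\!\bigl(m^{-1/2}e^{-c\sqrt m\,x}\bigr)$. I expect step (i)–(ii) to be the main obstacle: $\widehat K$ does \emph{not} decay in the upper half $\xi$-plane (the Gaussians grow along the imaginary axis), which is precisely why the $s=\xi^2$ substitution, the pole-avoiding radii $R_N$, and the splitting of the arc into a sector near the positive real axis plus its complement are all essential — neither $G$ nor $e^{\pm2\pi i x\sqrt s}$ is small on the whole arc by itself.

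The residue computation is then mechanical. At $s_m$ one has $e^{-3s_m}=-1$, hence $\operatorname{Res}_{s_m}G=\bigl(e^{-s_m}-e^{-3s_m}\bigr)\big/\bigl(-3e^{-3s_m}\bigr)=(1+e^{-i\pi m/3})/3$, which equals $e^{-i\pi/6}/\sqrt3$ for $m\equiv1\pmod6$ and $e^{i\pi/6}/\sqrt3$ for $m\equiv-1\pmod6$. Writing $s_m=(\pi m/3)e^{i\pi/2}$ gives $\sqrt{s_m}=\sqrt{\pi m/3}\,e^{i\pi/4}$ and $e^{2\pi i x\sqrt{s_m}}=e^{-\beta_m}e^{i\beta_m}$ with $\beta_m=\pi\sqrt{2\pi m/3}\,x$, while $1/\sqrt{s_m}=\sqrt{3/(\pi m)}\,e^{-i\pi/4}$; multiplying these and taking imaginary parts, using $-\sin(\theta-\tfrac{5\pi}{12})=\cos(\tfrac{\pi}{12}+\theta)$, turns each residue's contribution to $K(x)$ into $\tfrac{2\sqrt\pi}{\sqrt m}e^{-\beta_m}$ times $\cos(\tfrac{\pi}{12}+\beta_m)$ when $m\equiv1$ and times $\sin(\tfrac{\pi}{12}-\beta_m)$ when $m\equiv-1$. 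Finally, setting $m=c_k^2=6k+1$ ($k\ge0$, the case $k=0$ producing the isolated first term) and $m=d_k^2=6k-1$ ($k\ge1$) reorganises the sum into exactly the three pieces of \eqref{eq:: kernel K}, and dividing by $2\sqrt\pi$ yields the stated identity.
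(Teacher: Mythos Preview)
Your argument is correct and reaches the same series, but it is organised quite differently from the paper's proof. The paper works directly in the $\xi$-plane: it sets $g(z)=e^{2i\pi zx}\widehat K(z)$, locates the poles of $1-\widehat\omega_1(z)$ along the four rays $\arg z\in\{\pm\pi/4,\pm3\pi/4\}$ (these are exactly the square roots of your $s_m$), integrates over the upper semicircle of radius $R_n=\sqrt{n\pi}$, and kills the arc by splitting $[0,\pi]$ into $[0,\pi/4]\cup[\pi/4,3\pi/4]\cup[3\pi/4,\pi]$: on the middle sector a uniform bound $|\widehat K|\le1$ plus Jordan's lemma suffices, while on the outer sectors the Gaussian decay of $\widehat\omega_1$ does the work. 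No change of variable, no branch cut.

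Your route trades that for an algebraic preprocessing step the paper does not exploit: the cyclotomic factorisation $1-e^{-s}+e^{-2s}=(1+e^{-3s})/(1+e^{-s})$ collapses $\widehat K(\xi)$ to $G(\xi^2)=(e^{-s}-e^{-3s})/(1+e^{-3s})$, which puts all poles on the imaginary $s$-axis and makes both the pole-finding and the residue evaluation essentially one-line computations. The price is the branch cut of $\sqrt s$, but as you observe the two edge-of-cut contributions are purely imaginary (since $G(-t)$ is real for $t>0$) and hence drop out of the real quantity $K(x)$. Your arc-splitting --- $G$ small near the positive real axis, $e^{\pm2\pi ix\sqrt s}$ small elsewhere --- is the $s$-plane analogue of the paper's three-sector split, and the pole-avoiding radii $R_N=2\pi N/3$ play the same role as the paper's $R_n=\sqrt{n\pi}$. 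In short: the paper's proof is more direct as a contour integral, while yours front-loads an elementary identity that streamlines every subsequent step.
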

					\begin{figure}
						\centering
						\includegraphics[width=.4\linewidth]{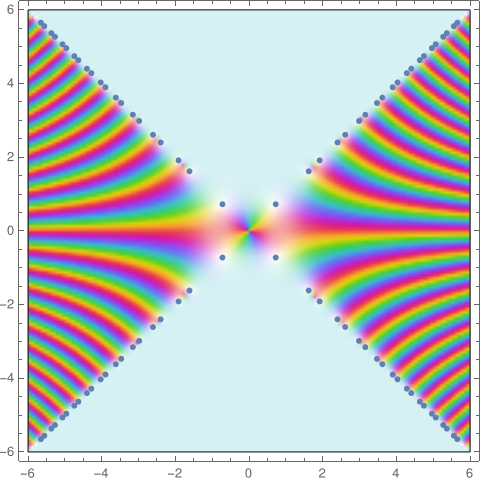}
						\caption{Zeroes in the complex plane of the exponential polynomial $h$ defined in \eqref{eq::exponential polynomial h}. Here $\kappa = \mu=1$, $2\pi^2\sigma_1^2=1$ and $2\pi^2\sigma_2^2=2$.}
						\label{poles_DoG}
					\end{figure}
					\begin{proof}
						We start by introducing for a fixed $x\in\R$, the function
						\begin{equation}\label{eq::def of g}
							g:z\in\C\mapsto g(z) =  e^{2i\pi z x}\frac{\widehat{\omega_1}(z)}{1-\widehat{\omega_1}(z)}=e^{2i\pi z x}\widehat{K}(z),\qquad\qquad \widehat{\omega_1}(z) = e^{-z^2}-e^{-2z^2}.
						\end{equation}
						We have that $g$ is a meromorphic function with simple poles (zeroes of the exponential polynomial $h$ defined in \eqref{eq::exponential polynomial h}) distributed as in Figure~\ref{poles_DoG} that we enumerate as $p_{k,\ell}$ and $q_{k,\ell}$ where $\ell\in\{0,\cdots,3\}$, by
						\begin{equation}\label{eq::poles of f, p}
							p_{k,\ell} = c_ke^{i\frac\pi 4}i^\ell\sqrt{\frac{\pi}{3}},\quad\quad k\in\N\quad\quad\mbox{and}\quad\quad
							q_{k,\ell} = d_ke^{i\frac\pi 4}i^\ell\sqrt{\frac{\pi}{3}},\qquad\qquad k\in\N^{*},
						\end{equation}
						where $c_k$ and $d_k$ are defined as in \eqref{eq:: c_k and d_k}. Since $\widehat{\omega}(p_{k,\ell}) = 1 = \widehat{\omega}(q_{k,\ell})$, we find the residues of $g$ to be given by
						\begin{equation}\label{eq::residue f, p}
							\Res(g,p_{k,\ell}) = -\frac{e^{i\frac\pi 4}i^\ell e^{i(-1)^\ell\frac\pi 3}}{2c_k\sqrt{\pi}}e^{2i\pi xp_{k,\ell}},\qquad k\in\N,\qquad
						\end{equation}
						\begin{equation}\label{eq::residue f, q}
							\Res(g,q_{k,\ell}) = \frac{e^{i\frac\pi 4}i^\ell e^{-i(-1)^\ell\frac\pi 3}}{2d_k\sqrt{\pi}}e^{2i\pi xq_{k,\ell}},\qquad k\in\N^{*}.
						\end{equation}
						We now fix $x>0$, and we let
						$$
						R_n:=\sqrt{n\pi},\qquad\qquad n\in\N^{*}.
						$$
						
						We consider the path $\Gamma_n$ straight along the real line axis from $-R_n$ to $R_n$ and then counterclockwise along a semicircle centred at $z=0$ in the upper half of the complex plane, $\Gamma_n = [-R_n,R_n]\cup C_n^{+}$, where $C_n^{+} = \{R_ne^{i\phi}\mid\phi\in[0,\pi]\}$. Then, by the residue Theorem, one has for all $n\in\N^{*}$,
						\begin{eqnarray}\label{eq::residue thm DoG}
							\int_{-R_n}^{R_n}g(\xi)d\xi+\int_{C_n^{+}}g(z)dz&=&2\pi i\sum\limits_{\ell=0}^{\ell=1}\sum\limits_{k=0}^{n-1}\Res(g,p_{k,\ell})+2\pi i\sum\limits_{\ell=0}^{\ell=1}\sum\limits_{k=1}^{n-1}\Res(g,q_{k,\ell})\nonumber\\
							&=&2\sqrt{\pi} e^{-\pi x\sqrt{\frac{2\pi}{3}}}\cos\left(\frac{\pi}{12}+\pi x\sqrt{\frac{2\pi}{3}}\right)+\nonumber\\
							&&2\sqrt{\pi}\sum\limits_{k=1}^{n-1}\frac{e^{-\pi c_kx\sqrt{\frac{2\pi}{3}}}}{c_k}\cos\left(\frac{\pi}{12}+\pi c_kx\sqrt{\frac{2\pi}{3}}\right)+\nonumber\\
							&&2\sqrt{\pi}\sum\limits_{k=1}^{n-1}\frac{e^{-\pi d_kx\sqrt{\frac{2\pi}{3}}}}{d_k}\sin\left(\frac{\pi}{12}-\pi d_kx\sqrt{\frac{2\pi}{3}}\right).
						\end{eqnarray}
						We set
						$$
						A_n(x) := \int_{C_n^{+}}g(z)dz.
						$$
						Then, one obtains,
						\begin{eqnarray}
							|A_n(x)|&\le& R_n\int_{0}^{\pi} e^{-2R_n\pi x\sin(\phi)}\vert\widehat{K}(R_ne^{i\phi})\vert d\phi\nonumber\\
							&=&\underbrace{R_n\int_{0}^{\frac\pi 4} e^{-2R_n\pi x\sin(\phi)}\vert\widehat{K}(R_ne^{i\phi})\vert d\phi}\limits_{J_1}+\underbrace{R_n\int_{\frac\pi 4}^{\frac{3\pi} 4} e^{-2R_n\pi x\sin(\phi)}\vert\widehat{K}(R_ne^{i\phi})\vert d\phi}\limits_{J_2}\nonumber\\
							& &+\underbrace{R_n\int_{\frac{3\pi} 4}^{\pi} e^{-2R_n\pi x\sin(\phi)}\vert\widehat{K}(R_ne^{i\phi})\vert d\phi}\limits_{J_3}.
						\end{eqnarray}
						Since $\vert\widehat{K}(R_ne^{i\phi})\vert\le 1$ for all $\phi\in[0, \pi]$, uniformly w.r.t. $n\in\N^{*}$, one has for all $x>0$,
						\begin{eqnarray}
							J_2:=R_n\int_{\frac\pi 4}^{\frac{3\pi} 4} e^{-2R_n\pi x\sin(\phi)}\vert\widehat{K}(R_ne^{i\phi})\vert d\phi&\le& R_n\int_{\frac\pi 4}^{\frac{3\pi} 4} e^{-2R_n\pi x\sin(\phi)}d\phi\nonumber\\
							&\le&\frac{\pi R_n}{2}e^{-R_n\pi x\sqrt{2}}\xrightarrow[n\to+\infty]{}0.
						\end{eqnarray}
						On the other hand, there exist a positive constant $C>0$ independent of $n\in\N^{*}$ ($C:=3/2$ is valid) such that for all $\phi\in[0, \pi]$, it holds
						$$
						\vert\widehat{K}(R_ne^{i\phi})\vert = \left|\frac{\widehat{\omega_1}(R_ne^{i\phi})}{1-\widehat{\omega_1}(R_ne^{i\phi})}\right|\le C|\widehat{\omega_1}(R_ne^{i\phi})|\le C \left(e^{-R_n^2\cos(2\phi)}+e^{-2R_n^2\cos(2\phi)}\right),\qquad \forall n\in\N^{*}.
						$$
						Since $\cos(2\phi)\ge -\frac{4}{\pi}\phi+1$ for all $\phi\in[0,\pi/4]$, one deduces
						\begin{eqnarray}
							J_1+J_3&\le& 2R_n \int_{0}^{\frac\pi 4} e^{-2R_n\pi x\sin(\phi)}\vert\widehat{K}(R_ne^{i\phi})\vert d\phi\le 2CR_n\int_{0}^{\frac\pi 4}e^{-R_n^2\cos(2\phi)}d\phi\nonumber\\
							&\le&4CR_n e^{-R_n^2}\int_{0}^{\frac\pi 4}e^{\displaystyle\frac{4}{\pi}R_n^2\phi}d\phi = \frac{C\pi}{R_n}\left[1-e^{-R_n^2 }\right]\xrightarrow[n\to+\infty]{}0.
						\end{eqnarray}
						To summarise, one has for all $x>0$,
						$$
						\int_{C_n^{+}}g(z)dz\xrightarrow[n\to+\infty]{}0.
						$$
						By taking the limit as $n\to+\infty$ in \eqref{eq::residue thm DoG} we find for all $x>0$,
						\begin{eqnarray}
							\frac{K(x)}{2\sqrt{\pi}}&=& e^{-\pi x\sqrt{\frac{2\pi}{3}}}\cos\left(\frac{\pi}{12}+\pi x\sqrt{\frac{2\pi}{3}}\right)+\sum\limits_{k=1}^{+\infty}\frac{e^{-\pi c_kx\sqrt{\frac{2\pi}{3}}}}{c_k}\cos\left(\frac{\pi}{12}+\pi c_kx\sqrt{\frac{2\pi}{3}}\right)\nonumber\\
							&&+\sum\limits_{k=1}^{+\infty}\frac{e^{-\pi d_kx\sqrt{\frac{2\pi}{3}}}}{d_k}\sin\left(\frac{\pi}{12}-\pi d_kx\sqrt{\frac{2\pi}{3}}\right).
						\end{eqnarray}
						Finally, the result follows at once since $K$ is an even function.
					\end{proof}
					\begin{remark}
						Since the kernel $K$ is even on $\R$, we will restrict its study to $\R_{+}$.
					\end{remark}
					
					In what follows, we aim to prove that $K$ admits a discrete and countable set of zeroes on $\R_{+}^{*}$. It is a consequence of the following.
					\begin{lemma}\label{Lem:estim0} For all $x\in \R_{+}^{*}$, it holds that
						\begin{equation}\label{eq:dev0}
							\frac{e^{\pi x \sqrt{\frac{2\pi}{3}}}K(x)}{2\sqrt{\pi}}=\cos\left(\frac{\pi}{12}+\pi x\sqrt{\frac{2\pi}{3}}\right)+\frac{S(x)}{x},
						\end{equation}
						where
						\begin{equation}\label{eq:reste0}
							\vert S(x)\vert\leq \frac{\sqrt{6}}{3\pi^2}.
						\end{equation}
						Moreover, the derivative of $K$ satisfies
						\begin{equation}\label{eq:dev1}
							\frac{\sqrt{3}e^{\pi x\sqrt{\frac{2\pi}{3}}}K'(x)}{4\pi^2} = -\sin\left(\frac{\pi}{3}+\pi x\sqrt{\frac{2\pi}{3}}\right)+T(x),
						\end{equation}
						where
						\begin{equation}\label{eq:reste1}
							\vert T(x)\vert\leq \frac {1+\pi x\sqrt{\frac{2\pi}{3}}}{\pi^3x^2}.
						\end{equation}
					\end{lemma}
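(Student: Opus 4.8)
The plan is to read off both estimates from the explicit series for $K$ in Theorem~\ref{thm::main 2}. Put $\beta:=\pi\sqrt{2\pi/3}$ and observe that, since $c_0=\sqrt{1}=1$, the term $e^{-\beta x}\cos(\tfrac\pi{12}+\beta x)$ displayed separately in \eqref{eq:: kernel K} is just the $k=0$ term of the first sum. Multiplying \eqref{eq:: kernel K} by $e^{\beta x}$ (for $x>0$) therefore produces exactly the announced splitting, with
\[
\frac{S(x)}{x}=\sum_{k\ge1}\frac{e^{-(c_k-1)\beta x}}{c_k}\cos\!\Bigl(\tfrac\pi{12}+c_k\beta x\Bigr)+\sum_{k\ge1}\frac{e^{-(d_k-1)\beta x}}{d_k}\sin\!\Bigl(\tfrac\pi{12}-d_k\beta x\Bigr).
\]
Since $c_k=\sqrt{6k+1}$ and $d_k=\sqrt{6k-1}$ are increasing and $>1$ for $k\ge1$, both sums converge absolutely and uniformly on $[\varepsilon,\infty)$ for every $\varepsilon>0$. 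For the bound on $S$ one estimates the tail using $|\cos|,|\sin|\le1$ and compares each (decreasing) summand with an integral: the substitution $u=\sqrt{6t\pm1}$ collapses $\int \tfrac{e^{-(\sqrt{6t\pm1}-1)\beta x}}{\sqrt{6t\pm1}}\,dt$ to $\tfrac13\int e^{-(u-1)\beta x}\,du$, and combined with $e^{-s}\le 1/s$ on the leftover boundary term this gives $|S(x)/x|\le C/(\beta x)$, i.e.\ $|S(x)|\le C/\beta$. Since $\beta^2=2\pi^3/3$, tuning the grouping of the two series — and, where needed, keeping the oscillatory factors rather than throwing away the cancellation between them responsible for $S(0^+)=0$ — delivers the stated constant $\sqrt6/(3\pi^2)$.

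For the derivative, differentiate \eqref{eq:: kernel K} term by term; this is legitimate on $(0,\infty)$ because the differentiated series has $k$-th term of size $O\!\bigl(e^{-(c_k-1)\beta x}\bigr)$ (the factor $c_k$ produced by the derivative cancels the $1/c_k$) and again converges uniformly on $[\varepsilon,\infty)$. Using $\cos\theta+\sin\theta=\sqrt2\sin(\theta+\tfrac\pi4)$ together with $\tfrac\pi{12}+\tfrac\pi4=\tfrac\pi3$, the derivative of $2\sqrt\pi\,e^{-\beta x}\cos(\tfrac\pi{12}+\beta x)$ is $-2\sqrt\pi\,\beta\sqrt2\,e^{-\beta x}\sin(\tfrac\pi3+\beta x)$, and a direct check gives $\tfrac{\sqrt3}{4\pi^2}\cdot 2\sqrt\pi\,\beta\sqrt2=1$ (this is precisely why the normalising factor $\tfrac{\sqrt3}{4\pi^2}$ appears in the statement). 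Hence $\tfrac{\sqrt3 e^{\beta x}K'(x)}{4\pi^2}=-\sin(\tfrac\pi3+\beta x)+T(x)$ with
\[
T(x)=-\sum_{k\ge1}e^{-(c_k-1)\beta x}\sin\!\Bigl(\tfrac\pi3+c_k\beta x\Bigr)-\sum_{k\ge1}e^{-(d_k-1)\beta x}\sin\!\Bigl(\tfrac\pi3-d_k\beta x\Bigr).
\]
Then $|T(x)|\le\sum_{k\ge1}e^{-(c_k-1)\beta x}+\sum_{k\ge1}e^{-(d_k-1)\beta x}$, and comparing the decreasing summands with $\tfrac13\int (v+1)e^{-v\beta x}\,dv=\tfrac1{3\beta x}+\tfrac1{3\beta^2x^2}$ (via $u=\sqrt{6t\pm1}$, then $v=u-1$) and using $\tfrac1{3\beta^2}=\tfrac1{2\pi^3}$ yields the bound $(1+\beta x)/(\pi^3x^2)$.

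The genuinely technical point is the quantitative book-keeping in the two tail estimates: one must pick the integral comparisons and the elementary exponential bounds sharply, and group the $c$- and $d$-series in the right way (writing $d_k=\sqrt{6(k-1)+5}$ to shift indices when convenient), in order to hit the exact constants $\sqrt6/(3\pi^2)$ and $(1+\beta x)/(\pi^3 x^2)$ rather than merely $O(1/x)$ and $O((1+x)/x^2)$. Everything else — absolute and uniform convergence, the interchange of sum and derivative, and the trigonometric identities — is routine.
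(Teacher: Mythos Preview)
Your approach is essentially the paper's: isolate the $k=0$ term of the series in Theorem~\ref{thm::main 2}, bound the two tails by integral comparison after the change of variable $u=\sqrt{6t\pm1}$, and justify term-by-term differentiation by uniform convergence on compacta of $(0,\infty)$. The trigonometric normalisation $\tfrac{\sqrt3}{4\pi^2}\cdot 2\sqrt\pi\,\beta\sqrt2=1$ is exactly what the paper uses implicitly.

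One point deserves correction. You suggest that hitting the constant $\sqrt6/(3\pi^2)$ for $S$ requires ``keeping the oscillatory factors'' or exploiting cancellation between the $c$- and $d$-series. It does not: the paper simply bounds $|\cos|,|\sin|\le1$ and uses the crude comparison $\sum_{k\ge1}r_j(k)\le\int_0^\infty r_j(t)\,dt=e^{-A}/(3A)$ with $A=\beta x$, giving $|S(x)|\le 2/(3\beta)=\sqrt6/(3\pi^{3/2})$. The constant $\sqrt6/(3\pi^2)$ printed in the statement is a typo for $\sqrt6/(3\pi^{3/2})$; indeed, the very next proposition applies the lemma in the form $|S(y_k)|\le 2/(\pi\sqrt{6\pi}\,y_k)$, and $2/(\pi\sqrt{6\pi})=\sqrt6/(3\pi^{3/2})$, confirming that the proof's constant is the intended one. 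So your instinct that ``something extra'' is needed was triggered by a misprint rather than by a genuine gap.

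For $T$, the paper combines the two series via $c_k\ge d_k$ and then compares with a single integral, whereas you treat the $c$- and $d$-series separately; both routes land on the same bound after using $1/(3\beta^2)=1/(2\pi^3)$ and $2/(3\beta)=\beta/\pi^3$.
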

					\begin{proof} Let $x>0$,
						one starts with the equation
						\[\frac{K(x)}{2\sqrt{\pi}}=e^{-\pi x \sqrt{\frac{2\pi}{3}}}\cos\left(\frac{\pi}{12}+\pi x\sqrt{\frac{2\pi}{3}}\right)+R_1(x)+R_2(x),
						\]
						where $R_1(x)=\sum\limits_{k=1}^{+\infty}r_1(k)\cos\left(\frac{\pi}{12}+\pi c_kx\sqrt{\frac{2\pi}{3}}\right)$ and $R_2(x)=\sum\limits_{k=1}^{+\infty}r_2(k)\sin\left(\frac{\pi}{12}-\pi d_kx\sqrt{\frac{2\pi}{3}}\right)$. The functions $r_1$ and $r_2$ are defined on $\mathbb R_+$ and $[1/3,+\infty)$ respectively by
						\[r_1(t)=\frac{ e^{-A\sqrt{1+6t}}}{\sqrt{1+6t}},\qquad\qquad r_2(t)=\frac{ e^{-A\sqrt{-1+6t}}}{\sqrt{-1+6t}}\qquad \text{ with }\quad A=\pi x \sqrt{\frac{2\pi}{3}}.
						\]
						Since $r_1$ is decreasing on $\mathbb R_+$ one deduces that
						\[\vert R_1(x)\vert\leq \sum\limits_{k=1}^{+\infty}r_1(k)\leq \sum\limits_{k=1}^{+\infty}\int_{k-1}^kr_1(t)\,dt=\int_0^\infty r_1(t)\,dt=\int_0^\infty  e^{-A\sqrt{1+6t}}\frac{dt}{\sqrt{1+6t}}=\frac{e^{-A}}{3A}.
						\]
						The same argument gives the same inequality for $\vert R_2(x)\vert$
						and inequality \eqref{eq:reste0} follows at once. On the other hand, it is  straightforward to observe that  the sum  $S(x)$ in  \eqref{eq:dev0} is uniformly (normally in fact) convergent on $(-\infty,-B]\cup[B,+\infty)$ for all $B>0$. Thus, after derivation under the sum, one finds for all $x>0$,
						\begin{eqnarray*}
							\frac{\sqrt{3}e^{\pi x\sqrt{\frac{2\pi}{3}}}K'(x)}{4\pi^2} &=&-\sin\left(\frac{\pi}{3}+\pi x\sqrt{\frac{2\pi}{3}}\right)-e^{\pi x\sqrt{\frac{2\pi}{3}}}\sum\limits_{k=1}^{\infty}e^{-\pi c_k x\sqrt{\frac{2\pi}{3}}}\sin\left(\frac{\pi}{3}+\pi c_kx\sqrt{\frac{2\pi}{3}}\right)\nonumber\\
							&&-e^{\pi x\sqrt{\frac{2\pi}{3}}}\sum\limits_{k=1}^{\infty}e^{-\pi d_k x\sqrt{\frac{2\pi}{3}}}\sin\left(\frac{\pi}{3}-\pi d_kx\sqrt{\frac{2\pi}{3}}\right)\nonumber\\
							&=&-\sin\left(\frac{\pi}{3}+\pi x\sqrt{\frac{2\pi}{3}}\right)+T(x),
						\end{eqnarray*}
						where
						$$
						|T(x)|\le e^{\pi x\sqrt{\frac{2\pi}{3}}}\sum\limits_{k=1}^{\infty}\left(e^{-\pi c_k x\sqrt{\frac{2\pi}{3}}}+e^{-\pi d_k x\sqrt{\frac{2\pi}{3}}}\right)\le 2e^{\pi x\sqrt{\frac{2\pi}{3}}}\sum\limits_{k=1}^{\infty}e^{-\pi d_kx\sqrt{\frac{2\pi}{3}}},
						$$
						since $c_k\ge d_k$ for all $k\ge 1$.
						But one has
						$$
						\sum\limits_{k=1}^{\infty}e^{-\pi d_k x\sqrt{\frac{2\pi}{3}}}\le\sum\limits_{k=1}^{\infty}\int_{k-\frac 2 3}^{k}e^{-\pi x\sqrt{-1+6t} \sqrt{\frac{2\pi}{3}}}dt=\int_{\frac 1 3}^{\infty}e^{-\pi x\sqrt{-1+6t} \sqrt{\frac{2\pi}{3}}}dt=\frac {1+\pi x\sqrt{\frac{2\pi}{3}}}{2\pi^3x^2}e^{-\pi x\sqrt{\frac{2\pi}{3}}},
						$$
						so that inequality  \eqref{eq:reste1} follows at once and completes the proof of the lemma.
					\end{proof}
					\begin{proposition}\label{prop::zeroes of K}
						Let $(x_k)_{k\in\N^{*}}$ and $(y_k)_{k\in\N^{*}}$ denote the sequences of zeroes and extrema of the function $x\mapsto \cos(\pi/12+\pi x\sqrt{2\pi/3})$ on $\R_{+}^{*}$ respectively.
						There exists   $(z_k)_{k\in\N^{*}}$, sequence of zeroes of $K$ in $\R_{+}^{*}$ such that $z_k$ is the unique zero of $K$ in the interval $I_k:=]y_k,y_{k+1}[$ for all $k\in\N^{*}$ and
						\begin{equation}\label{eq::asymptotic of zeroes of K}
							|x_{k+1}-z_k|\le\frac{\sqrt{3}}{\pi\sqrt{2\pi}}\arcsin\left(\frac{8}{\pi(12k-1)}\right),\qquad\qquad \forall k\in\N^{*}.
						\end{equation}
					\end{proposition}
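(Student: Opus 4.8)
The plan is to compare $K$ with its leading term from Lemma~\ref{Lem:estim0} on a short interval centred at the midpoint of $I_k$. Write $a:=\pi\sqrt{2\pi/3}$, so that $1/a=\sqrt{3}/(\pi\sqrt{2\pi})$ is exactly the prefactor in \eqref{eq::asymptotic of zeroes of K}, and set $\phi(x):=\pi/12+ax$. First I would record the explicit positions of the distinguished points: the zeroes and extrema of $\cos\phi$ on $\R_{+}^{*}$ are $x_k=(12k-7)\pi/(12a)$ and $y_k=(12k-1)\pi/(12a)$, so that $I_k=\,]y_k,y_{k+1}[$ has length $\pi/a$ and $x_{k+1}$ is precisely its midpoint. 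Put $d_k:=\frac1a\arcsin(8/(\pi(12k-1)))$, which is the right-hand side of \eqref{eq::asymptotic of zeroes of K}; a preliminary check gives $0<ad_k<\pi/4$ for every $k\ge 1$ (it suffices that $8/(\pi(12k-1))<\sin(\pi/4)$, true already at $k=1$), hence $x^{\pm}:=x_{k+1}\pm d_k\in I_k$ and, more precisely, $y_k<x^{-}$.

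The core step is the sign analysis at $x^{\pm}$ through \eqref{eq:dev0}. Since $\phi(x_{k+1})=\pi/2+k\pi$, one computes $\cos\phi(x^{+})=(-1)^{k+1}\sin(ad_k)$ and $\cos\phi(x^{-})=(-1)^{k}\sin(ad_k)$: two numbers of modulus $8/(\pi(12k-1))$ with opposite signs. On the other hand the remainder in \eqref{eq:dev0} obeys $|S(x)|/x\le\sqrt{6}/(3\pi^2 x)$, and since $x^{-}>y_k=(12k-1)\pi/(12a)$ this is at most $4\sqrt{6}\,a/(\pi^3(12k-1))$ at both points $x^{\pm}$, which is strictly less than $8/(\pi(12k-1))$ precisely because $\sqrt{6}\,a<2\pi^2$ — an inequality that, after squaring and using $a^2=2\pi^3/3$, collapses to $1<\pi$. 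Therefore $K(x^{-})$ and $K(x^{+})$ carry the (opposite) signs of $\cos\phi(x^{-})$ and $\cos\phi(x^{+})$; since $K$ is smooth (Theorem~\ref{thm::main 2}), the intermediate value theorem produces a zero $z_k\in\,]x^{-},x^{+}[\,\subset I_k$, giving at once the existence of $z_k$ and the estimate $|x_{k+1}-z_k|<d_k$, i.e.\ \eqref{eq::asymptotic of zeroes of K}.

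For uniqueness in $I_k$, I would first rule out zeroes outside $]x^{-},x^{+}[$: on $[y_k,x^{-}]$ and $[x^{+},y_{k+1}]$ the function $|\cos\phi|$ is monotone and bounded below by $\sin(ad_k)=8/(\pi(12k-1))$, while $|S(x)|/x\le\sqrt{6}/(3\pi^2 y_k)$ is still $<8/(\pi(12k-1))$ by the computation above, so on these two sub-intervals $K$ keeps the nonzero sign of $\cos\phi$ and has no zero. On $]x^{-},x^{+}[$ I would invoke the derivative estimate \eqref{eq:dev1}: the unique zero of $x\mapsto\sin(\pi/3+ax)$ inside $I_k$ lies at $(12k+8)\pi/(12a)=x_{k+1}+\pi/(4a)$, hence beyond $x^{+}$ (because $d_k<\pi/(4a)$), so $|\sin(\pi/3+ax)|\ge\sin(\pi/4-ad_k)>0$ throughout $[x^{-},x^{+}]$; meanwhile $|T(x)|\le(1+ax)/(\pi^3x^2)$ is decreasing in $x$ and therefore bounded on $[x^{-},x^{+}]$ by its value at $x^{-}$, a fortiori by $(1+ay_k)/(\pi^3 y_k^2)=8(12+(12k-1)\pi)/((12k-1)^2\pi^2)$. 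Checking that this last quantity is $<\sin(\pi/4-ad_k)$ for all $k\ge 1$ (a single numerical verification at $k=1$, since the left side decreases and the right side increases with $k$) makes \eqref{eq:dev1} force $K'\ne 0$ on $]x^{-},x^{+}[$; thus $K$ is strictly monotone there, $z_k$ is its unique zero in that interval, and hence in $I_k$.

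The main obstacle is precisely this last paragraph: forcing the two competing remainders — the one from $S$ in \eqref{eq:dev0} and the one from $T$ in \eqref{eq:dev1} — to stay below the leading oscillation \emph{uniformly} in $k$, which is delicate for the first small values of $k$ where the remainders in Lemma~\ref{Lem:estim0} are not yet negligible. The fortunate point is that the decisive inequality for the $S$-remainder reduces to $1<\pi$, and the $T$-remainder is dispatched by monotonicity in $k$ together with one explicit check at $k=1$.
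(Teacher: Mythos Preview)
Your argument is correct but structured differently from the paper's. The paper applies the intermediate value theorem at the \emph{endpoints} $y_k,y_{k+1}$ (where $\cos\phi=\pm1$, so the sign of $K$ is immediate) to get existence, then analyses an \emph{arbitrary} zero $\tilde z\in I_k$: from $\cos\phi(\tilde z)=-S(\tilde z)/\tilde z$ it reads off the bound $|\tilde z-x_{k+1}|\le d_k$, and a trigonometric rewriting of $-\sin(\pi/3+a\tilde z)$ (via the shift $\pi/3=\pi/12+\pi/4$ together with the relation $S(\tilde z)=(-1)^k\sin\bigl(a(\tilde z-x_{k+1})\bigr)$ just obtained) shows that $K'(\tilde z)$ carries the fixed sign $(-1)^{k+1}$ at every zero, whence Rolle's theorem forbids two zeros in $I_k$. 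Your route instead places the IVT window at $x^\pm=x_{k+1}\pm d_k$, which bundles existence and \eqref{eq::asymptotic of zeroes of K} into a single step, but then costs two further checks: a sign comparison on the flanks $(y_k,x^-]\cup[x^+,y_{k+1})$, and a uniform bound $|T|<\sin(\pi/4-ad_k)$ on all of $[x^-,x^+]$ to force $K'\ne0$ there. The paper's Rolle device is leaner, since it only evaluates $K'$ at the zeros themselves rather than on an interval; your approach trades that economy for a more direct coupling of the IVT step to the final estimate.
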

					\begin{proof}
						We fix $k\in\N^{*}$, then one has
						$$
						|S(y_k)|\le\frac{2}{\pi\sqrt{6\pi}y_k} = \frac{8}{\pi(12k-1)}\le\frac{8}{11\pi}<1,
						$$
						by Lemma~\ref{Lem:estim0}. One deduces that
						$$
						e^{\pi y_k \sqrt{\frac{2\pi}{3}}}\frac{K(y_k)}{2\sqrt{\pi}} = (-1)^k+S(y_k)\begin{cases}
							<0,\quad\mbox{if }\quad k \quad\mbox{is odd},\cr
							>0,\quad\mbox{if }\quad k\quad\mbox{is even}.
						\end{cases}
						$$
						It follows that $K$ admits at least one zero $z_k$ in the interval $I_k$ by the intermediate value theorem. Let us prove that $z_k$ is the unique zero in this interval.  We let $\widetilde{z}$ be an arbitrary zero of $K$ in the interval $I_k$ and set $e_k:=\widetilde{z}-x_{k+1}$. Then one has by Lemma~\ref{Lem:estim0}
						\begin{equation}\label{eq::S de z tilde}
							S(\widetilde{z}) = -\cos\left(\frac{\pi}{12}+\pi \widetilde{z}\sqrt{\frac{2\pi}{3}}\right)=(-1)^k\sin\left(\pi e_k\sqrt{\frac{2\pi}{3}}\right),
						\end{equation}
						and
						\begin{equation}\label{eq::intermediate}
							\left| \sin\left(\pi e_k\sqrt{\frac{2\pi}{3}}\right) \right|\le\frac{2}{\pi\sqrt{6\pi}\widetilde{z}}\le\frac{2}{\pi\sqrt{6\pi}y_k}\le\frac{2}{\pi\sqrt{6\pi}y_1} = \frac{8}{11\pi}.
						\end{equation}
						On the other hand, using \eqref{eq::S de z tilde} and trigonometric identity for sine, one obtains
						\begin{eqnarray}
							\frac{\sqrt{3}e^{\pi \widetilde{z}\sqrt{\frac{2\pi}{3}}}K'(\widetilde{z})}{4\pi^2} &=&-\sin\left(\frac{\pi}{12}+\pi \widetilde{z}\sqrt{\frac{2\pi}{3}}+\frac\pi 4\right)+T(\widetilde{z})\nonumber\\
							&=&\frac{(-1)^{k+1}}{\sqrt{2}}\cos\left(\pi e_k\sqrt{\frac{2\pi}{3}}\right)+\frac{1}{\sqrt{2}}S(\widetilde{z})+T(\widetilde{z}).
						\end{eqnarray}
						By using \eqref{eq::S de z tilde}, \eqref{eq::intermediate} and  \eqref{eq:reste1} one finds
						$$
						\cos\left(\pi e_k\sqrt{\frac{2\pi}{3}}\right)\ge \sqrt{1-\frac{8}{11\pi}} >\sqrt{1-\frac 1 2}=\frac{1}{\sqrt{2}},
						$$
						and 
						$$
						\left|\frac{1}{\sqrt{2}}S(\widetilde{z})+T(\widetilde{z})\right|\le\frac{1}{\sqrt{2}}\frac{8}{11\pi}+ \frac {1+\pi y_1\sqrt{\frac{2\pi}{3}}}{\pi^3y_1^2}<\frac{1}{2}.
						$$
						It follows that
						$$
						K'(\widetilde{z})\begin{cases}
							>0,\quad\mbox{if }\quad k \quad\mbox{is odd},\cr
							<0,\quad\mbox{if }\quad k\quad\mbox{is even}.
						\end{cases}
						$$
						
						Let $\widetilde{z}$ and $\widetilde{z}'$ be successive zeroes of $K$ in $I_k$ and assume that $k$ is odd to be fixed. Then $K'(\widetilde{z})>0$ and $K'(\widetilde{z}')>0$. By Rolle's theorem, there exists $\widetilde{z}''\in(\widetilde{z},\widetilde{z}')$ such that $K(\widetilde{z}'')=0$ and $K'(\widetilde{z}'')<0$, which is a contradiction of the fact that any zero $\widetilde{z}$ in $I_k$ satisfies $K'(\widetilde{z})>0$. Thus, $z_k$ is the unique zero of $K$ in the interval $I_k$. Finally, inequality \eqref{eq::intermediate} applied with $\widetilde{z}=z_k$ leads to inequality \eqref{eq::asymptotic of zeroes of K}, and this completes the proof of the proposition.
					\end{proof}
					\begin{remark}\label{rmk::about Gaussian kernel}
						Suppose we model the interaction of V1 neurons in Equation~\eqref{eq::NF-intro} with a Gaussian kernel $\omega$. In that case, we will obtain that the associated kernel $\widehat{K}$ defined in \eqref{eq::def of g} has two isolated poles located on the imaginary axis of the complex plane. The zero-order terms which dominate the expansion of $K$ given by \eqref{eq:: kernel K} are only an exponential decreasing function without a cosine multiplicative factor. Therefore, the kernel $K$ will never have infinitely many discrete distributed zeroes.
					\end{remark}
					
					\subsection{Miscellaneous complements}\label{ss::complements for nonlinear Mackay effect}
					
					Some of the results provided in this section were used in Section~\ref{ss::theoretical MacKay effect nonlinear} to describe the MacKay effect when the response function in Equation~\eqref{eq::NF-intro} is nonlinear.
					
					We recall from Theorem~\ref{thm::existence of stationary input} that, given $1\le p\le\infty$ and $I\in L^p(\R^2)$, then for any $a_0\in L^p(\R^2)$, the initial value Cauchy problem associated with Equation~\eqref{eq::NF-intro} has a unique solution $a\in X_p$. It is implicitly given for all $x\in\R^2$, and every $t\ge 0$ by 
					\begin{equation}\label{eq::Variation of constant formula NF}
						a(x,t) = e^{- t}a_0(x)+\left(1-e^{- t}\right)I(x)+\mu\int_{0}^{t}e^{-(t-s)}(\omega\ast f(a))(x,s)ds.
					\end{equation}
					
					Given $I\in L^\infty(\R^2)$, the following theorem improves the upper bound of the $L^\infty$-norm of the stationary state $a_I\in L^\infty(\R^2)$ provided in \eqref{eq::upper bound on Psi_I}.
					\begin{theorem}\label{thm::main theorem 3}
						Let $a_0\in L^\infty(\R^2)$, $I\in L^\infty(\R^2)$ with $\|I\|_\infty = 1$ and $a\in X_\infty$ be the solution of \eqref{eq::NF-intro}. It holds
						\begin{equation}\label{eq::bound on limsup of a}							\limsup\limits_{t\rightarrow+\infty}\|a(\cdot,t)\|_\infty\le g_1,
						\end{equation}
						where $g_1>0$ is the smaller fixed point of the following function
						\begin{equation}\label{eq::function g}
							g:x\in\R\longmapsto 1+\frac{\mu}{\mu_0}f(x)\in\R_{+}^{*}.
						\end{equation}
					\end{theorem}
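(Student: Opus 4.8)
The plan is to turn the $L^\infty$-estimate into a scalar integral inequality for $M(t):=\|a(\cdot,t)\|_\infty$, pass to the $\limsup$, and then read off the bound from the monotonicity of $x\mapsto x-g(x)$.

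First I would take the $L^\infty(\R^2)$-norm in the variation-of-constants formula \eqref{eq::Variation of constant formula NF}. Using $\|\omega\ast h\|_\infty\le\|\omega\|_1\|h\|_\infty$ (see Lemma~\ref{Lem::nonlinear operator Q}) together with $\mu\|\omega\|_1=\mu/\mu_0$ (recall $\mu_0=\|\omega\|_1^{-1}$ from \eqref{eq::parameter mu_0}), the hypothesis $\|I\|_\infty=1$, and the pointwise bound $\|f(a(\cdot,s))\|_\infty\le f(M(s))$ — valid since $f$ is non-decreasing with $f(0)=0$ and odd, so that $\sup_{|u|\le r}|f(u)|=f(r)$ for $r\ge0$ — one obtains, for all $t\ge0$,
\[
M(t)\ \le\ e^{-t}\|a_0\|_\infty+(1-e^{-t})+\frac{\mu}{\mu_0}\int_0^t e^{-(t-s)}f(M(s))\,ds.
\]
Since $\|f\|_\infty=1$, the same inequality crudely gives $M(t)\le\|a_0\|_\infty+1+\mu/\mu_0$, so $M_\infty:=\limsup_{t\to+\infty}M(t)$ is finite.

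Next I would pass to the limit superior. Fix $\varepsilon>0$ and choose $T>0$ with $M(s)\le M_\infty+\varepsilon$ for $s\ge T$. Splitting the integral as $\int_0^T+\int_T^t$ and letting $t\to+\infty$: the term $e^{-t}\|a_0\|_\infty\to0$; the contribution $\frac{\mu}{\mu_0}\int_0^T e^{-(t-s)}f(M(s))\,ds$ is bounded in modulus by $\frac{\mu}{\mu_0}e^{-t}(e^{T}-1)\to0$ because $|f|\le1$; and $\int_T^t e^{-(t-s)}\,ds=1-e^{-(t-T)}\to1$, while $f(M(s))\le f(M_\infty+\varepsilon)$ on $[T,t]$ by monotonicity. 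Hence $M_\infty\le 1+\frac{\mu}{\mu_0}f(M_\infty+\varepsilon)$; letting $\varepsilon\downarrow0$ and using the continuity of $f$ yields
\[
M_\infty\ \le\ 1+\frac{\mu}{\mu_0}f(M_\infty)\ =\ g(M_\infty),
\]
with $g$ as in \eqref{eq::function g}.

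Finally I would conclude with a monotonicity argument. Set $h(x):=x-g(x)=x-1-\frac{\mu}{\mu_0}f(x)$. Since $0\le f'\le f'(0)=1$ and $\mu<\mu_0$, one has $h'(x)\ge1-\mu/\mu_0>0$, so $h$ is strictly increasing; as $h(x)=x+O(1)$ at $\pm\infty$, it has a unique zero, namely the unique (hence the smallest) fixed point $g_1$ of $g$, and $g_1>0$ because $g(x)\ge1-\mu/\mu_0>0$ for every $x$. The inequality $h(M_\infty)\le0$ then forces $M_\infty\le g_1$, which is \eqref{eq::bound on limsup of a}. I do not expect a genuine obstacle: the only points needing a little care are the estimate $\|f(a(\cdot,s))\|_\infty\le f(\|a(\cdot,s)\|_\infty)$ (where oddness of the sigmoid, or linearity of $f$, enters) and the bookkeeping in the passage to the $\limsup$. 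An equally short alternative is to compare $M$ directly with the solution $N$ of the scalar ODE $\dot N=-N+\frac{\mu}{\mu_0}f(N)+1$, $N(0)=\|a_0\|_\infty$: a Gronwall argument based on $0\le f'\le1$ gives $M(t)\le N(t)$ for all $t$, and $N(t)\to g_1$ since $\dot N=-h(N)$ with $h$ strictly increasing and $h(g_1)=0$, so $g_1$ is a globally attracting equilibrium.
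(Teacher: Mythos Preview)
Your proof is correct and follows essentially the same route as the paper: take the $L^\infty$-norm in the variation-of-constants formula, show $M_\infty:=\limsup_{t\to\infty}\|a(\cdot,t)\|_\infty<\infty$, re-apply the estimate starting at a late time $T_\varepsilon$ to obtain $M_\infty\le g(M_\infty)$, and deduce $M_\infty\le g_1$. The only difference is in the last step: the paper introduces the monotone iteration $u_0=M_\infty$, $u_{n+1}=g(u_n)$ and argues that this bounded non-decreasing sequence converges to the smallest fixed point $g_1$, whereas you use the strict monotonicity of $h(x)=x-g(x)$ (from $\mu<\mu_0$ and $f'\le1$) to conclude directly that $h(M_\infty)\le0$ forces $M_\infty\le g_1$; your version is a bit more transparent and makes the role of $\mu<\mu_0$ explicit, while the paper's iteration argument is phrased so as to also cover the case where boundedness of $(u_n)$ comes from $\|f\|_\infty<\infty$ alone.
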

					\begin{proof}
						We start by using \eqref{eq::Variation of constant formula NF}, \eqref{eq::nonlinear map Q 3} and Minkowski's inequality to obtain for a.e. $x\in\R^2$ and every $t\ge 0$,
						\begin{equation}\label{eq::bound on absolute value of a}
							|a(x,t)|\le e^{- t}\|a_0\|_{L^\infty}+(1-e^{- t}) +\frac {\mu }{\mu_0}(1-e^{- t}).
						\end{equation}
						Letting $t\to\infty$ in the last inequality, we find $V_\infty:=\limsup\limits_{t\rightarrow+\infty}\|a(\cdot,t)\|_\infty\le 1+\mu/\mu_0,$
						showing in particular that $V_\infty<\infty$. It follows that
						\begin{equation}\label{eq:: traduction of limsup}
							\forall\varepsilon>0,\;\exists T_\varepsilon>0\;\mbox{s.t.},\;\forall t\ge T_\varepsilon,\;\; \|a(\cdot,t)\|_\infty\le V_\infty+\varepsilon.
						\end{equation}
						Applying the variation of constants formula  \eqref{eq::Variation of constant formula NF}, starting at $T_\varepsilon>0$, one deduces for every $t>T_\varepsilon$ that
						\begin{eqnarray}
							\|a(\cdot,t)\|_\infty&\le&e^{-(t-T_\varepsilon)}\|a(\cdot,T_\varepsilon)\|_\infty+\left(1-e^{-(t-T_\varepsilon)}\right)+\mu \|\omega\|_1\int_{T_\varepsilon}^{t}e^{-(t-s)}f(\|a(\cdot,s)\|_\infty)ds\nonumber\\
							&\le& e^{-(t-T_\varepsilon)}(V_\infty+\varepsilon)+1+\frac{\mu}{\mu_0}f(V_\infty+\varepsilon).
						\end{eqnarray}
						Letting respectively $t\rightarrow\infty$ and $\varepsilon\rightarrow 0$ in the preceding inequality we find
						\begin{equation}\label{eq::bound of limitsup}
							V_\infty\le 1+\frac{\mu}{\mu_0}f(V_\infty).
						\end{equation}
						Let $(u_n)_n$ be the real sequence defined by
						\begin{equation}\label{eq::real sequence}
							u_0 = V_\infty,\qquad u_{n+1} = g(u_n),\qquad\forall n\ge 1.
						\end{equation}
						Then $(u_n)_n$ is a bounded and non-decreasing sequence. The boundedness of $(u_n)_n$ follows from the boundedness\footnote{Notice that in the case where the response function $f$ is only Lipschitz continuous (with the Lipschitz constant equal to $f'(0)=1$) but not bounded, the sequence $(u_n)_n$ is still bounded, via
							$$|u_n|\le V_\infty+\frac{\mu_0}{\mu_0-\mu},\qquad\forall n\in\N.$$} of the sigmoid function $f$. Let us prove by induction that the sequence $(u_n)_n$ is increasing. Due to the inequality \eqref{eq::bound of limitsup}, one has
						$$
						u_1 = g(u_0) = 1+\frac{\mu}{\mu_0}f(u_0) = 1+\frac{\mu}{\mu_0}f(V_\infty)\ge V_\infty = u_0.$$
						If $u_n\ge u_{n-1}$ then, since $f$ is non-decreasing, one obtains
						$$
						u_{n+1} = g(u_n) = 1+\frac{\mu}{\mu_0}f(u_n)\ge 1+\frac{\mu}{\mu_0}f(u_{n-1}) = g(u_{n-1})=u_n,
						$$
						showing that $(u_n)_n$ is a non-decreasing sequence. The monotone convergence and fixed point Theorems, we have that $(u_n)_n$ converges to the smaller fixed point $g_1>0$ of the function $g$, and \eqref{eq::bound on limsup of a} follows.
					\end{proof}
					
					Let $1\le p\le\infty$, we introduce for every $I\in L^p(\R^2)$, the map $\Phi_I:L^p(\R^2)\mapsto L^p(\R^2)$ defined for all $v\in L^p(\R^2)$ by
					\begin{equation}\label{eq::map Phi}
						\Phi_I(v)=	I+\mu\omega\ast f(v).
					\end{equation}
					\begin{theorem}\label{thm::derivative of Psi}
						Let $1< p\le\infty$. If $\mu<\mu_0$, then $\Psi$ belongs to $C^1(L^p(\R^2);L^p(\R^2))$ and the differential at $I\in L^p(\R^2)$ is given by
						\begin{equation}\label{eq::differential of Psi}
							D\Psi(I)h = (\idty-D\Psi(I))^{-1}h,\qquad\forall h\in L^p(\R^2).
						\end{equation}
					\end{theorem}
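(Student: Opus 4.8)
The plan is to recover $\Psi$ as an implicitly defined map and invoke the implicit function theorem in Banach spaces. I introduce $G\colon L^p(\R^2)\times L^p(\R^2)\to L^p(\R^2)$ by $G(I,v)=v-\Phi_I(v)=v-I-\mu\,\omega\ast f(v)$, with $\Phi_I$ as in \eqref{eq::map Phi}. By the very definition \eqref{eq::map Psi} of $\Psi$ together with the uniqueness of the stationary state (Theorem~\ref{thm::existence of stationary input}), one has $G(I,\Psi(I))=0$ for every $I$, and $\Psi(I)$ is the unique zero of $G(I,\cdot)$.

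First, I would check that $G$ is of class $C^1$ on $L^p(\R^2)\times L^p(\R^2)$. The dependence on $I$ being affine, this reduces to the $C^1$-regularity of the map $\cQ\colon v\mapsto\omega\ast f(v)$ from $L^p(\R^2)$ to itself, which is exactly the content of Lemma~\ref{lem:differential of map Phi}; this is the only place where the restriction $1<p\le\infty$ is needed, since for $p=1$ the map $v\mapsto(h\mapsto f'(v)h)$ fails to be continuous from $L^1(\R^2)$ into $\mathscr{L}(L^1(\R^2))$, as already observed after Theorem~\ref{thm::exact controllability of the nonlinear system}. This yields $D_vG(I,v)h=h-\mu\,\omega\ast[f'(v)h]$ and $D_IG(I,v)h=-h$, both jointly continuous in $(I,v)$.

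Second, I would show that $D_vG(I,\Psi(I))$ is invertible in $\mathscr{L}(L^p(\R^2))$. Write $L_Ih:=\mu\,\omega\ast[f'(\Psi(I))h]=D\Phi_I(\Psi(I))h$. Since $\|f'\|_\infty=f'(0)=1$ by our normalization and $\omega\in L^1(\R^2)$, the generalized Young inequality (as in \eqref{eq::nonlinear map Q 1}) gives $\|L_Ih\|_p\le\mu\|\omega\|_1\,\|h\|_p=(\mu/\mu_0)\|h\|_p$, so $\|L_I\|_{\mathscr{L}(L^p(\R^2))}\le\mu/\mu_0<1$ under the hypothesis $\mu<\mu_0$. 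Hence $D_vG(I,\Psi(I))=\idty-L_I$ is invertible, with inverse given by the Neumann series $\sum_{n\ge 0}L_I^n$.

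The implicit function theorem then applies at every pair $(I,\Psi(I))$: on a neighbourhood of $I$ the map $\Psi$ is $C^1$, and
$$
D\Psi(I)=-\big[D_vG(I,\Psi(I))\big]^{-1}D_IG(I,\Psi(I))=(\idty-L_I)^{-1}=(\idty-D\Phi_I(\Psi(I)))^{-1},
$$
that is, $D\Psi(I)h$ is the unique $w\in L^p(\R^2)$ solving $w=h+\mu\,\omega\ast[f'(\Psi(I))w]$, which is formula \eqref{eq::differential of Psi}. As $I$ is arbitrary, $\Psi\in C^1(L^p(\R^2);L^p(\R^2))$. The main obstacle is the genuine $C^1$-regularity of $\cQ$ — the continuity of $v\mapsto D\cQ(v)\in\mathscr{L}(L^p(\R^2))$, not merely pointwise Fréchet differentiability — which is what breaks down at $p=1$ and is dispatched by Lemma~\ref{lem:differential of map Phi}; once that is granted, the invertibility step and the identification of $D\Psi(I)$ are routine consequences of $\mu<\mu_0$.
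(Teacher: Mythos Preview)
Your proposal is correct and follows essentially the same route as the paper: you introduce the map $G(I,v)=v-\Phi_I(v)$ (the paper's $\cG$), use Lemma~\ref{lem:differential of map Phi} to obtain its $C^1$-regularity for $1<p\le\infty$, invoke the bound $\|D\Phi_I(\Psi(I))\|_{\mathscr{L}(L^p(\R^2))}\le\mu/\mu_0<1$ to get invertibility of $D_vG(I,\Psi(I))$ via Neumann series, and then apply the implicit function theorem to conclude. This is exactly the strategy of the paper's proof, which packages the first two steps into Lemma~\ref{lem:map G}.
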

					
					The proof of Theorem~\ref{thm::derivative of Psi} is a consequence of the following two lemmas.
					\begin{lemma}\label{lem:differential of map Phi}
						Let $1< p\le\infty$, and $I\in L^p(\R^2)$. Then for every  $\mu>0$, the map $\Phi_I$ belongs to the space $C^1(L^p(\R^2);L^p(\R^2))$ and the differential at $v\in L^p(\R^2)$ is given by
						\begin{equation}\label{eq::Gateau differential of Phi}
							(D\Phi_I(v)h)(x) = \mu\int_{\R^2}\omega(x-y)f'(v(y))h(y)dy,\qquad\forall h\in L^p(\R^2),\;x\in\R^2.
						\end{equation}
						Moreover, it  holds
						\begin{equation}\label{eq::norm of differential of Phi}
							\|D\Phi_I(v)\|_{\mathscr{L}(L^p(\R^2))}\le\frac{\mu}{\mu_0},\qquad\forall v\in L^p(\R^2).
						\end{equation}
					\end{lemma}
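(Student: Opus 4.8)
The plan is to exhibit the candidate differential, verify it is a bounded operator with the stated norm, and then establish Fréchet differentiability together with continuity of $v\mapsto D\Phi_I(v)$ by level‑set truncations that play off the two memberships $\omega\in L^1(\R^2)$ and $\omega\in L^p(\R^2)$ against each other.

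First, since $\Phi_I(v)=I+\mu\,\omega\ast f(v)$ depends on $v$ only through $\omega\ast f(v)$, I set $L(v)h:=\mu\,\omega\ast(f'(v)h)$, which is the right‑hand side of \eqref{eq::Gateau differential of Phi}. As $f'$ is bounded with $\|f'\|_\infty=f'(0)=1$, one has $\|f'(v)h\|_p\le\|h\|_p$, and Young's convolution inequality gives $\|L(v)h\|_p\le\mu\|\omega\|_1\|h\|_p=(\mu/\mu_0)\|h\|_p$; hence $L(v)\in\mathscr{L}(L^p(\R^2))$ and \eqref{eq::norm of differential of Phi} holds. This part is routine.

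Next I show $D\Phi_I(v)=L(v)$ in the Fréchet sense. Using the integral form of the first‑order Taylor remainder, $\Phi_I(v+h)-\Phi_I(v)-L(v)h=\mu\,\omega\ast r$ with $r(x)=\psi_h(x)h(x)$ and $\psi_h(x)=\int_0^1[f'(v(x)+th(x))-f'(v(x))]\,dt$, so that pointwise $|\psi_h(x)|\le\min\big(2,\tfrac12\|f''\|_\infty|h(x)|\big)$. Fix $\delta>0$ and split $r=r\mathbf 1_{\{|h|\le\delta\}}+r\mathbf 1_{\{|h|>\delta\}}$. On $\{|h|\le\delta\}$ one has $|r|\le\tfrac12\|f''\|_\infty\delta\,|h|$, so Young with $\omega\in L^1$ gives $\|\omega\ast(r\mathbf 1_{\{|h|\le\delta\}})\|_p\le\tfrac12\|\omega\|_1\|f''\|_\infty\delta\|h\|_p$. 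On $\{|h|>\delta\}$ one has $|r|\le 2|h|$, and Chebyshev's inequality yields $\int_{\{|h|>\delta\}}|h|\le\delta^{1-p}\|h\|_p^p$, whence $\|r\mathbf 1_{\{|h|>\delta\}}\|_1\le 2\delta^{1-p}\|h\|_p^p$ and, by Young with $\omega\in L^p$, $\|\omega\ast(r\mathbf 1_{\{|h|>\delta\}})\|_p\le 2\|\omega\|_p\delta^{1-p}\|h\|_p^p$. Adding the two bounds and dividing by $\|h\|_p$, one obtains
\[
\frac{\|\Phi_I(v+h)-\Phi_I(v)-L(v)h\|_p}{\|h\|_p}\le \frac{\mu\|\omega\|_1\|f''\|_\infty}{2}\,\delta+2\mu\|\omega\|_p\,\delta^{1-p}\|h\|_p^{\,p-1};
\]
fixing $\delta$ small and then letting $\|h\|_p\to0$ shows the quotient tends to $0$ (here $p>1$ is essential for the second term to vanish). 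For $p=\infty$ the estimate $\|\omega\ast r\|_\infty\le\|\omega\|_1\|r\|_\infty\le\tfrac12\|\omega\|_1\|f''\|_\infty\|h\|_\infty^2$ is immediate.

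Finally I prove that $v\mapsto L(v)$ is continuous from $L^p(\R^2)$ into $\mathscr{L}(L^p(\R^2))$. For $v_1,v_2\in L^p(\R^2)$ put $w:=v_1-v_2$; then $(L(v_1)-L(v_2))h=\mu\,\omega\ast(\phi h)$ with $\phi=f'(v_1)-f'(v_2)$ and $|\phi|\le\min(2,\|f''\|_\infty|w|)$. Splitting at $\{|w|\le\delta\}$ and $\{|w|>\delta\}$ and arguing exactly as above — Young with $\omega\in L^1$ on the first piece, Hölder together with Chebyshev and Young with $\omega\in L^p$ on the second — gives $\|L(v_1)-L(v_2)\|_{\mathscr{L}(L^p(\R^2))}\le\mu\|\omega\|_1\|f''\|_\infty\delta+2\mu\|\omega\|_p\big(\|w\|_p/\delta\big)^{p/q}$, so continuity follows by first choosing $\delta$ small and then letting $\|w\|_p\to0$ (the case $p=\infty$ is again direct, via $\|L(v_1)-L(v_2)\|_{\mathscr{L}(L^\infty)}\le\mu\|\omega\|_1\|f''\|_\infty\|w\|_\infty$). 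Combined with the preceding step this gives $\Phi_I\in C^1(L^p(\R^2);L^p(\R^2))$. I expect the main obstacle — and the reason the statement genuinely fails at $p=1$ — to be that on the infinite‑measure space $\R^2$ convergence in $L^p$ controls neither the $L^\infty$ nor the $L^1$ scale, so neither the Nemytskii map $v\mapsto f(v)$ nor the multiplication map $v\mapsto(f'(v)\,\cdot\,)$ is well behaved between $L^p$‑type spaces; it is precisely the smoothing by the integrable kernel $\omega\in L^1\cap L^p$, together with the level‑set truncations, that manufactures the decaying factors $\|h\|_p^{\,p-1}$ and $\|w\|_p^{\,p/q}$ that close the estimates, and these exponents degenerate when $p=1$.
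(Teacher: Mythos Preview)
Your proof is correct and follows essentially the same approach as the paper: both rely on a level-set truncation together with Chebyshev's inequality and the dual membership $\omega\in L^1\cap L^p$ to control the two pieces, and both identify $p=1$ as the borderline where the argument fails. The only minor organizational difference is that the paper establishes Gateaux differentiability first and then proves continuity of $v\mapsto D\Phi_I(v)$ (implicitly invoking the standard fact that a continuous Gateaux derivative implies Fréchet $C^1$), whereas you prove Fréchet differentiability directly via the same splitting applied to the remainder; your route is slightly more self-contained.
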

					\begin{proof}
						It is straightforward to show that for all $1\le p\le\infty$, and $I\in L^p(\R^2)$, the map $\Phi_I$ is Gateau-differentiable at every $v\in L^p(\R^2)$, the Gateau-differential is given for every $h\in L^p(\R^2)$ by \eqref{eq::Gateau differential of Phi}. Since $f'$ is bounded by $1$, we find
						\begin{equation}\label{eq::norm of Gateau-differential of Phi}
							\|D\Phi_I(v)h\|_p\le\frac{\mu}{\mu_0}\|h\|_p.
						\end{equation}
						Let us now show that for all $1< p\le\infty$, the Gateau-differential
						\begin{eqnarray}
							D\Phi_I:&L^p(\R^2)&\longrightarrow\mathscr{L}(L^p(\R^2))\nonumber\\
							&v&\longmapsto D\Phi_I(v),
						\end{eqnarray}
						is continuous. To this end, let $(v_n)\subset L^p(\R^2)$ be a real sequence converging in the $L^p$-norm to $v\in L^p(\R^2)$. We want to prove that $D\Phi_I(v_n)$ converges to $D\Phi_I(v)$ in $\mathscr{L}(L^p(\R^2))$. Let $h\in L^p(\R^2)$ and set
						\begin{equation}
							R_n:x\in\R^2\longmapsto R_n(x) = \int_{\R^2}\omega(x-y)[f'(v_n(y))-f'(v(y))]h(y)dy.
						\end{equation}
						It is immediate to obtain the result when $p=\infty$. Indeed, since $f'$ is $\|f''\|_\infty$-Lipschitz continuous, one immediately gets that 
						\[
						\|R_n\|_{\infty}\le\|f''\|_\infty\|\omega\|_1\|v_n-v\|_\infty\|h\|_\infty,
						\]
						so that
						\begin{eqnarray}\label{eq::validity estimate infty}
							\|D\Phi_I(v_n)-D\Phi_I(v)\|_{\mathscr{L}(L^\infty (\R^2))} &=&\sup\limits_{\stackrel{h\in L^\infty (\R^2)}{\|h\|_\infty=1}}\|D\Phi_I(v_n)h-D\Phi_I(v)h\|_\infty\nonumber\\
							&\le& \mu\|f''\|_\infty\|\omega\|_1\|v_n-v\|_\infty\xrightarrow[n\to\infty]{} 0.
						\end{eqnarray}
						Let us turn to an argument for the cases $1< p<\infty$.  Since $(v_n)$ tends in the $L^p$-norm to $v\in L^p(\R^2)$, for every $\varepsilon>0$, there exists a positive integer $N\in\N$ such that for any $n\ge N$ it holds $\|v_n-v\|_p\le\varepsilon$.  In the following, we fix $\varepsilon>0$ and $N$ defined previously. For every $n\in\N$ such that $n\ge N$, we consider  $E_n:=\{y\in\R^2\mid |v_n(y)-v(y)|>\sqrt{\varepsilon}\}$, one has for every $x\in\R^2$,
						\begin{equation}\label{eq::separate}
							R_n(x) = \underbrace{\int_{\R^2\bs E_n}\omega(x-y)[f'(v_n(y))-f'(v(y))]h(y)dy}\limits_{\Lambda_1(x)}+\underbrace{\int_{E_n}\omega(x-y)[f'(v_n(y))-f'(v(y))]h(y)dy}\limits_{\Lambda_2(x)}.
						\end{equation}
						By Chebyshev's inequality, it holds that
						\begin{equation}\label{eq:cheb}
							|E_n|\leq \frac{\|v_n-v\|_p^p}{\varepsilon^{\frac{p}2}}\leq \varepsilon^{\frac{p}2},
						\end{equation}
						where $|E_n|$ denotes the Lebesgue measure of the measurable set $E_n\subset \R^2$.
						
						On one hand, using Hölder inequality and the fact that $f'$ is $\|f''\|_\infty$-Lipschitz continuous, one 
						has
						\[
						|\Lambda_1(x)|\le \|f''\|_\infty\sqrt{\varepsilon}\|\omega\|_1^{\frac{1}{q}}\left\{\int_{\R^2\bs E_n}|\omega(x-y)||h(y)|^pdy\right\}^{\frac{1}{p}},\qquad \forall x\in\R^2.
						\]
						Taking the $p$-th power on both sides of the above inequality and integrating it with variable $x$ over $\R^2$, we find, thanks to Fubini's theorem,
						\begin{equation}\label{eq::separate 1}
							\|\Lambda_1\|_p:=  \left\{\int_{\R^2} |\Lambda_1(x)|^p dx\right\}^{\frac{1}{p}}\le \|f''\|_\infty\|\omega\|_1\sqrt{\varepsilon}\|h\|_p.
						\end{equation}
						On the other hand,  using Hölder inequality and the fact that $f'$ is bounded by $1$, one has
						\begin{eqnarray*}
							|\Lambda_2(x)|^p&\le&2^p|E_n|^{\frac pq}\int_{E_n}|\omega(x-y)|^p|h(y)|^pdy,\qquad \forall x\in\R^2.
						\end{eqnarray*}
						Integrating the above inequality with variable $x$ over $\R^2$, we find, thanks to Fubini's theorem,
						\begin{equation}\label{eq::separate 2}
							\|\Lambda_2\|_p:=\left\{\int_{\R^2} |\Lambda_2(x)|^p dx\right\}^{\frac{1}{p}}\le 2|E_n|^{\frac 1q}\|\omega\|_p\|h\|_p\le 2\|\omega\|_p\|h\|_p(\sqrt{\varepsilon})^{p-1},
						\end{equation}
						where the last inequality is obtained thanks to \eqref{eq:cheb} and $p/q = p-1$.
						
						Taking now the $p$-th power on both sides of inequality~\eqref{eq::separate}, integrating it with variable $x$ over $\R^2$, applying Minkowski's inequality and using \eqref{eq::separate 1} and \eqref{eq::separate 2}, one gets
						\[
						\|R_n\|_p\le(\|f''\|_\infty\|\omega\|_1+2\|\omega\|_p)\max(\sqrt{\varepsilon}, (\sqrt{\varepsilon})^{p-1})\|h\|_p.
						\]
						Therefore,
						\begin{eqnarray}\label{eq::validity estimate}
							\|D\Phi_I(v_n)-D\Phi_I(v)\|_{\mathscr{L}(L^p(\R^2))} &=&\sup\limits_{\stackrel{h\in L^p(\R^2)}{\|h\|_p=1}}\|D\Phi_I(v_n)h-D\Phi_I(v)h\|_p\nonumber\\
							&\le&\mu(\|f''\|_\infty\|\omega\|_1+2\|\omega\|_p)\max(\sqrt{\varepsilon}, (\sqrt{\varepsilon})^{p-1}). 
						\end{eqnarray}
						Letting $\varepsilon$ tend to zero, one deduces that $D\Phi_I$ is continuous.
						Finally, using \eqref{eq::norm of Gateau-differential of Phi} we find for all $v\in L^p(\R^2)$,
						$$
						\|D\Phi_I(v)\|_{\mathscr{L}(L^p(\R^2))} = \sup\limits_{\stackrel{h\in L^p(\R^2)}{\|h\|_p=1}}\|D\Phi_I(v)h\|_p\le\frac{\mu}{\mu_0},
						$$
						completing the proof of the lemma.
					\end{proof}
					\begin{lemma}\label{lem:map G}
						Let $1< p\le\infty$. Under assumption $\mu<\mu_0$, the map
						\begin{eqnarray}\label{eq::map G}
							\cG:&L^p(\R^2)\times L^p(\R^2)&\longrightarrow L^p(\R^2)\nonumber\\
							&(I,a)&\longmapsto\cG(I,\Psi(I)) =  a-\Phi_I(a),
						\end{eqnarray}
						belongs to $C^1(L^p(\R^2)\times L^p(\R^2);L^p(\R^2))$ and the partial derivative $D_a\cG(I,a)$ is invertible in $\mathscr{L}(L^p(\R^2))$.
					\end{lemma}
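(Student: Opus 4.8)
The plan is to derive the statement directly from Lemma~\ref{lem:differential of map Phi} together with the Neumann-series criterion for invertibility. Since $\Phi_I(a) = I + \mu\,\omega\ast f(a)$, the map $\cG$ reads
\begin{equation*}
	\cG(I,a) = a - \Phi_I(a) = a - I - \mu\,\omega\ast f(a),\qquad (I,a)\in L^p(\R^2)\times L^p(\R^2).
\end{equation*}
The contribution $(I,a)\mapsto a-I$ is a bounded linear map, hence $C^\infty$; the contribution $(I,a)\mapsto \mu\,\omega\ast f(a)$ does not depend on $I$, and by Lemma~\ref{lem:differential of map Phi} (in which the additive constant $I$ plays no role in the differential) it belongs to $C^1(L^p(\R^2);L^p(\R^2))$ with differential $D\Phi_I(a)$ at the point $a$. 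As $1<p\le\infty$, the continuity of $v\mapsto D\Phi_I(v)$ asserted there yields $\cG\in C^1(L^p(\R^2)\times L^p(\R^2);L^p(\R^2))$, with
\begin{equation*}
	D_I\cG(I,a)k = -k,\qquad D_a\cG(I,a)h = h - D\Phi_I(a)h = (\idty - D\Phi_I(a))h,\qquad h,k\in L^p(\R^2).
\end{equation*}

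It then remains to prove that $\idty - D\Phi_I(a)\in\mathscr{L}(L^p(\R^2))$ is invertible. For this I would use the bound $\|D\Phi_I(a)\|_{\mathscr{L}(L^p(\R^2))}\le \mu/\mu_0$ from Lemma~\ref{lem:differential of map Phi}, which is $<1$ under the standing hypothesis $\mu<\mu_0$. Hence the Neumann series $\sum_{n\ge 0}\bigl(D\Phi_I(a)\bigr)^n$ converges in the Banach algebra $\mathscr{L}(L^p(\R^2))$ and defines a two-sided bounded inverse of $\idty - D\Phi_I(a)$, with operator norm at most $\mu_0/(\mu_0-\mu)$. Thus $D_a\cG(I,a)$ is invertible, as claimed; in particular, this is the ingredient that lets one apply the implicit function theorem to the equation $\cG(I,a)=0$ and obtain Theorem~\ref{thm::derivative of Psi}.

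I do not anticipate a real difficulty: the lemma only recombines the Fréchet differentiability and the contraction estimate for $D\Phi_I$ already established in Lemma~\ref{lem:differential of map Phi} with the elementary fact that the identity minus a strict contraction is boundedly invertible. The single point worth a word is the \emph{joint} $C^1$ regularity in $(I,a)$, which is settled by observing that $\cG(I,a) = \cG(0,a) - I$: this reduces the entire regularity question to the one-variable map $a\mapsto \cG(0,a) = a - \mu\,\omega\ast f(a)$, whose $C^1$ character — and the norm continuity of its differential when $1<p\le\infty$ — is exactly what Lemma~\ref{lem:differential of map Phi} provides.
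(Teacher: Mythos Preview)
Your proof is correct and follows essentially the same route as the paper: both identify $D_a\cG(I,a)=\idty-D\Phi_I(a)$ and invoke the bound $\|D\Phi_I(a)\|\le\mu/\mu_0<1$ from Lemma~\ref{lem:differential of map Phi} to conclude invertibility via Neumann series. Your decomposition $\cG(I,a)=\cG(0,a)-I$ is a slightly cleaner way to handle the joint $C^1$ regularity than the paper's direct expansion of $\cG(I+J,a+b)$, but the substance is the same.
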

					\begin{proof}
						Since $\Phi_I$ is differentiable at $a\in L^p(\R^2)$ for all $I\in L^p(\R^2)$, one has for all $(J,b)\in L^p(\R^2)\times L^p(\R^2)$,
						\begin{eqnarray*}
							\cG(I+J,a+b) &=& a+b-\Phi_{I+J}(a)-D\Phi_{I+J}(a) b+o(\|b\|_p)\nonumber\\
							&=&\cG(I,a)+(\idty-D\Phi_{I}(a))b-J+o(\|b\|_p).
						\end{eqnarray*}
						The map $L_{(I,a)}:L^p(\R^2)\times L^p(\R^2)\longrightarrow L^p(\R^2)$,
						$L_{(I,a)}(J,b)=(\idty-D\Phi_{I}(a))b-J$,
						is linear and bounded,
						$$
						\|L_{(I,a)}(J,b)\|_{ L^p(\R^2)}\le\left(1+\frac{\mu}{\mu_0}\right)\|(J,b)\|_{L^p(\R^2)\times L^p(\R^2)}.
						$$
						It follows that $\cG$ is differentiable at $(I,a)\in L^p(\R^2)\times L^p(\R^2)$ and
						$$
						D\cG(I,a)(J,b) = (\idty-D\Phi_{I}(a))b-J,\qquad\forall(J,b)\in L^p(\R^2)\times L^p(\R^2).
						$$
						We now show that the map $(I,a)\in L^p(\R^2)^2\mapsto D\cG(I,a)\in\mathscr{L}(L^p(\R^2)^2,L^p(\R^2))$ is continuous. Let $(I_1,a_1), (I_2,a_2) \in L^p(\R^2)\times L^p(\R^2)$. One has for all $(J,b)\in L^p(\R^2)\times L^p(\R^2)$,
						\begin{eqnarray*}
							\|D\cG(I_1,a_1)(J,b)-D\cG(I_2,a_2)(J,b)\|_p&=&\|D\Phi_{I_1}(a_1)b-D\Phi_{I_2}(a_2) b\|_p\nonumber\\
							&\le&\|D\Phi_{I_1}(a_1)-D\Phi_{I_1}(a_2)\|_{\mathscr{L}(L^p(\R^2))}\|(b,J)\|_{L^p(\R^2)^2}.
						\end{eqnarray*}
						It follows by Lemma \ref{lem:differential of map Phi} that
						\begin{eqnarray*}
							\|D\cG(I_1,a_1)-D\cG(I_2,a_2)\|_{\mathscr{L}(L^p(\R^2)^2,L^p(\R^2))}&\le&\|D\Phi_{I_1}(a_1)-D\Phi_{I_2}(a_2)\|_{\mathscr{L}(L^p(\R^2))}\\
							&\le&\mu\|f\|_\infty\|\omega\|_{1}\|(I_1,a_1)-(I_2,a_2)\|_{L^p(\R^2)^2},
						\end{eqnarray*}
						showing that $\cG$ belongs to $C^1(L^p(\R^2)\times L^p(\R^2);L^p(\R^2))$.
						Finally, if $I\in L^p(\R^2)$, $a_{I}:=\Psi(I)\in L^p(\R^2)$ then
						$
						D_a\cG(I,a_{I}) = \idty-D\Phi_{I}(a_{I}),
						$
						is invertible in $\mathscr{L}(L^p(\R^2))$ if $\mu<\mu_0$.
					\end{proof}
					
					We now can present the proof of Theorem~\ref{thm::derivative of Psi}.
					
					\begin{proof}[Proof of Theorem~\ref{thm::derivative of Psi}]
						Let $\mu<\mu_0$. For fixed $I\in L^p(\R^2)$, $a_{I}:=\Psi(I)\in L^p(\R^2)$, we have $\cG(I,a_{I}) = 0$, and $D_a\cG(I,a_{I})$
						is invertible in $\mathscr{L}(L^p(\R^2))$ by Lemma \ref{lem:map G}. It follows by the implicit function Theorem that there is an open neighbourhood $\cV$ of $I$ in $L^p(\R^2)$, an open neighbourhood $\cW$ of $a_{I}$ in $L^p(\R^2)$ and a map $\Sigma:\cV\rightarrow\cW$ of class $C^1$ such that the following holds
						$$
						(I\in\cV,\;a\in\cW\;\mbox{and }\;\cG(I,a) = 0)\Longleftrightarrow (I\in\cV \;\mbox{and }\;a = \Sigma(I)).
						$$
						Thereby, $\Psi(\cdot)|_{\cV} = \Sigma(\cdot)$ and then $\Psi$ is $C^1$ at $I$. Since $I\in L^p(\R^2)$ is arbitrary, it follows that $\Psi$ belongs to $C^1(L^p(\R^2);L^p(\R^2))$. Moreover, taking the derivative of $\cG(I, \Psi(I)) = 0$ at $I$, one deduces that
						\begin{equation}\label{eq::intermediate Phi}
							\left(\idty-D\Phi_{I}(\Psi(I))\right)(D\Psi(I)h) = h,\qquad\forall h \in L^p(\R^2).
						\end{equation}
						Thus, \eqref{eq::differential of Psi} is an immediate consequence of \eqref{eq::norm of differential of Phi}, \eqref{eq::intermediate Phi} and the Neumann expansion lemma.
					\end{proof}
					
					{
						\begin{lemma}
							\label{lem::differentiel-U}
							Let $1<p\le\infty$, $T>0$ and consider the solution $U$ to \eqref{eq::nonlinear semigroup}. Then, $D_{a_0}U(t,v)$ is a well-defined invertible operator for any $v\in L^p(\R^2)$ and every $0\le t\le T$. Moreover, it holds
							\begin{equation}\label{eq:differentiel-U}
								\begin{split}
									\|D_{a_0}U(t, v)-\idty\|_{\mathscr{L}(L^p(\R^2))}&\le t\left(1+\frac{\mu}{\mu_0}\right)e^{\left(1+\frac{\mu}{\mu_0}\right)t},\\
									\|\left[D_{a_0}U(t, v)\right]^{-1}-\idty\|_{\mathscr{L}(L^p(\R^2))}&\le t\left(1+\frac{\mu}{\mu_0}\right)e^{\left(1+\frac{\mu}{\mu_0}\right)t}.
								\end{split}
							\end{equation}
						\end{lemma}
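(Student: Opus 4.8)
The plan is to identify $W(t):=D_{a_0}U(t,v)$ with the fundamental solution of the variational equation along the trajectory $s\mapsto U(s,v)$, and then to read off both bounds in \eqref{eq:differentiel-U} from Gronwall's lemma, the key input being the uniform estimate on $DN$ coming from Lemma~\ref{lem:differential of map Phi}.

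First I would record the differentiability of $N$. By \eqref{eq::operator N} one has $N=-\idty+\Phi_0$ with $\Phi_0(u):=\mu\omega\ast f(u)$ the map of Lemma~\ref{lem:differential of map Phi} taken with $I=0$; since $1<p\le\infty$, that lemma gives $\Phi_0\in C^1(L^p(\R^2);L^p(\R^2))$ with $\|D\Phi_0(u)\|_{\mathscr L(L^p(\R^2))}\le\mu/\mu_0$, hence $N\in C^1(L^p(\R^2);L^p(\R^2))$, $DN(u)=-\idty+D\Phi_0(u)$ and
\[
\|DN(u)\|_{\mathscr L(L^p(\R^2))}\le 1+\frac{\mu}{\mu_0},\qquad\forall u\in L^p(\R^2).
\]
As \eqref{eq::nonlinear semigroup} is an ODE in the Banach space $L^p(\R^2)$ whose right-hand side is globally Lipschitz and of class $C^1$, the standard theorem on $C^1$-dependence of solutions on the initial datum yields that $a_0\mapsto U(t,a_0)$ is $C^1$ for each $t\in[0,T]$, and that $W(\cdot)=D_{a_0}U(\cdot,v)$ is the unique $\mathscr L(L^p(\R^2))$-valued continuous solution of $W(t)=\idty+\int_0^t DN(U(s,v))\,W(s)\,ds$. (If one prefers to avoid quoting this, it is reproved by writing $U(t,v+h)-U(t,v)=W(t)h+o(\|h\|_p)$, the remainder being controlled via Gronwall and the continuity of $DN$.)

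The first estimate is then immediate: from the integral identity $\|W(t)\|\le 1+(1+\mu/\mu_0)\int_0^t\|W(s)\|\,ds$, so Gronwall gives $\|W(t)\|\le e^{(1+\mu/\mu_0)t}$ on $[0,T]$, and reinserting this into $\|W(t)-\idty\|\le(1+\mu/\mu_0)\int_0^t\|W(s)\|\,ds$ together with $\int_0^t e^{cs}\,ds\le t\,e^{ct}$ gives the first line of \eqref{eq:differentiel-U}. For invertibility I would pass to the evolution family generated by $M(s):=DN(U(s,v))$, which is continuous on $[0,T]$ (as $U(\cdot,v)$ is continuous and $DN$ is) and bounded by $1+\mu/\mu_0$: a Picard iteration produces the two-parameter family $\{\Phi(t,s)\}_{0\le s,t\le T}\subset\mathscr L(L^p(\R^2))$ with $\partial_t\Phi(t,s)=M(t)\Phi(t,s)$, $\Phi(s,s)=\idty$, and since the linear equation $\dot y=M(t)y$ has a unique global solution on $[0,T]$ both forward and backward, one obtains the propagator identity $\Phi(t,r)=\Phi(t,s)\Phi(s,r)$ for all $r,s,t\in[0,T]$; taking $r=t$ gives $\Phi(t,s)\Phi(s,t)=\idty=\Phi(s,t)\Phi(t,s)$, so each $\Phi(t,s)$ is invertible with inverse $\Phi(s,t)$. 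As $W(t)=\Phi(t,0)$, this shows $D_{a_0}U(t,v)$ is invertible with $[D_{a_0}U(t,v)]^{-1}=\Phi(0,t)$; differentiating $\Phi(0,s)\Phi(s,0)=\idty$ in $s$ gives $\partial_s\Phi(0,s)=-\Phi(0,s)M(s)$ with $\Phi(0,0)=\idty$, so $V(t):=[D_{a_0}U(t,v)]^{-1}$ satisfies $V(t)=\idty-\int_0^t V(s)M(s)\,ds$, and the very same Gronwall argument yields $\|V(t)\|\le e^{(1+\mu/\mu_0)t}$ and the second line of \eqref{eq:differentiel-U}.

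I expect the main obstacle to be precisely the invertibility of $W(t)=D_{a_0}U(t,v)$ for all $t\in[0,T]$ — not merely for $t$ small — which is why I would route through the evolution family and the backward propagator $\Phi(0,t)$ rather than a Neumann-series perturbation of the identity; everything else is routine Gronwall bookkeeping. It is also worth bearing in mind that the hypothesis $1<p\le\infty$ is used already at the first step: it is exactly the range in which $DN$ is continuous into $\mathscr L(L^p(\R^2))$ (cf.\ Lemma~\ref{lem:differential of map Phi} and the Remark following Theorem~\ref{thm::exact controllability of the nonlinear system}), a property that fails for $p=1$.
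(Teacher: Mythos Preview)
Your proof is correct and follows essentially the same route as the paper: identify $W(t)=D_{a_0}U(t,v)$ with the solution of the variational equation, bound $\|DN\|\le 1+\mu/\mu_0$ via Lemma~\ref{lem:differential of map Phi}, apply Gronwall to get $\|W(t)\|\le e^{(1+\mu/\mu_0)t}$, and reinsert into the integral identity to obtain the first line of \eqref{eq:differentiel-U}; then derive the analogous equation for $V(t)=W(t)^{-1}$ and repeat.

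The one genuine difference is your handling of invertibility. The paper first observes that $W(t)$ is invertible for $t$ small (implicitly via a Neumann-series argument from the first estimate), writes the ODE \eqref{eq:abstract ivp inverse} for $W^{-1}$ on that small interval, and then iterates over successive subintervals $[t,2t],\dots$ to reach all of $[0,T]$. You instead build the full two-parameter evolution family $\{\Phi(t,s)\}$ for the linear nonautonomous equation $\dot y=M(t)y$ with $M(t)=DN(U(t,v))$, and use the propagator identity $\Phi(t,s)\Phi(s,t)=\idty$ to conclude invertibility for every $t\in[0,T]$ in one stroke. This is cleaner and avoids the iteration; conversely, the paper's argument stays closer to the one-parameter object $W(t)=\Phi(t,0)$ and does not require introducing $\Phi(t,s)$ for general $s$. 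Both are standard and yield exactly the same bounds.
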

						
						\begin{proof}
							From Lemma~\ref{lem:differential of map Phi}, one gets $DN(U(t,v))=-\idty+D\Phi_{0}(U(t,v))$. One deduces that for every $\operatorname{V}\in \mathscr{L}(L^p(\R^2))$, it holds
							\begin{eqnarray}\label{eq:uniform operators}
								\|DN(U(t,v))\operatorname{V}\|_{\mathscr{L}(L^p(\R^2))}&\le&\|DN(U(t,v))\|_{\mathscr{L}(L^p(\R^2))}\|\operatorname{V}\|_{\mathscr{L}(L^p(\R^2))}\nonumber\\
								&\le&\left(1+\frac{\mu}{\mu_0}\right)\|\operatorname{V}\|_{\mathscr{L}(L^p(\R^2))}.
							\end{eqnarray}
							It follows that $DN(U(t,v))$ is a bounded linear operator on $\mathscr{L}(L^p(\R^2))$. One also has that $t\in[0, T]\mapsto U(t,v)\in L^p(\R^2)$ is continuous and that $u\in L^p(\R^2)\mapsto D\Phi_{0}(u)\in\mathscr{L}(L^p(\R^2))$ is continuous thanks to Lemma~\ref{lem:differential of map Phi}. Together with \eqref{eq:uniform operators}, one deduces that $t\in[0, T]\mapsto DN(U(t,v))\in\mathscr{L}(\mathscr{L}(L^p(\R^2)))$ is continuous.
							
							By integrating \eqref{eq::nonlinear semigroup}, one gets for any $a_0\in L^p(\R^2)$ and $t\geq 0$,
							\begin{equation}\label{eq:diff-I-U}
								U(t,a_0)=a_0+\int_0^tN(U(s,a_0))\, ds. 
							\end{equation}
							It is immediate to see that $U(t,\cdot)$ is differentiable with respect to $a_0$ for every $t\geq 0$ and we use $D_{a_0}U(t,v)$ to denote the evaluation of this differential at any $v\in L^p(\R^2)$. Actually, differentiating \eqref{eq:diff-I-U} yields that $D_{a_0}U(\cdot,v)$ belongs to $C^{1}([0,T],\mathscr{L}(L^p(\R^2)))$ for every $T>0$ and satisfies 
							\begin{equation}\label{eq:uo 1}
								D_{a_0}U(t,v)=\idty+\int_{0}^{t}DN(U(s,v))D_{a_0}U(s,v)ds,\qquad 0\le t\le T.
							\end{equation}
							Differentiating the above equation, one sees that $t\mapsto DU_{a_0}(t,v)$
							is the solution of the homogeneous initial value problem 
							\begin{equation}\label{eq:abstract ivp}
								\begin{cases}
									\partial_tD_{a_0}U(t,v)&=DN(U(t,v))D_{a_0}U(t,v),\cr
									D_{a_0}U(0,v) &=\idty,
								\end{cases}
							\end{equation}
							defined for any $v\in L^p(\R^2)$ and every $0\le t\le T$. This system is nothing else but the differential of \eqref{eq::nonlinear semigroup} with respect to $a_0$. 
							By \cite[Theorem~5.2.-item (i)]{pazy2012}, one has
							\begin{equation}\label{eq:uo 2}
								\|D_{a_0}U(t,v)\|_{\mathscr{L}(L^p(\R^2))}\le e^{\left(1+\frac{\mu}{\mu_0}\right)t},\qquad 0\le t\le T.
							\end{equation}
							By combining \eqref{eq:uo 1} and \eqref{eq:uo 2}, one obtains the first inequality in \eqref{eq:differentiel-U}. On the other hand, \eqref{eq:uo 1} ensures that for $t\in[0, T]$ small enough,  $D_{a_0}U(s,v)$ is invertible in $\mathscr{L}(L^p(\R^2))$ for every $s\in[0, t]$  and it holds 
							\begin{equation}\label{eq:abstract ivp inverse}
								\begin{cases}
									\partial_s\left[D_{a_0}U(s, v)\right]^{-1}&=-\left[D_{a_0}U(s, v)\right]^{-1}DN(U(s,v)),\cr
									\left[D_{a_0}U(0, v)\right]^{-1} &=\idty.
								\end{cases}
							\end{equation}
							Arguing as above, one obtains that the homogeneous initial value problem \eqref{eq:abstract ivp inverse} admits a unique solution $\left[D_{a_0}U(\cdot,v)\right]^{-1}\in C^{1}([0,t],\mathscr{L}(L^p(\R^2)))$ given by
							\begin{equation}\label{eq:uo 1 inverse}
								\left[D_{a_0}U(s, v)\right]^{-1}=\idty-\int_{0}^{s}\left[D_{a_0}U(\tau, v)\right]^{-1}DN(U(\tau,v))d\tau,\qquad 0\le s\le t.
							\end{equation}
							Moreover, one has 
							\begin{equation}\label{eq:uo 2 inverse}
								\|\left[D_{a_0}U(s, v)\right]^{-1}\|_{\mathscr{L}(L^p(\R^2))}\le e^{\left(1+\frac{\mu}{\mu_0}\right)s},\qquad 0\le s\le t,
							\end{equation}
							and the second inequality in \eqref{eq:differentiel-U} follows for $t\in[0, T]$ small enough. Since \eqref{eq:abstract ivp} holds for every $0\le t\le T$, one can iterate this procedure in $[t, 2t],\cdots,$ to prove that $D_{a_0}U(t,v)$ is invertible in $\mathscr{L}(L^p(\R^2))$ for every $t\in[0, T]$ and that the second inequality in \eqref{eq:differentiel-U} holds.
						\end{proof}
						
					}

						\printbibliography
						
					\end{document}